\documentclass[11pt]{article}

\usepackage[utf8]{inputenc}
\usepackage[margin=1in]{geometry}

\usepackage{amsmath}
\usepackage{amsthm}
\usepackage{amssymb}
\usepackage{amsfonts}
\usepackage{mathtools}
\usepackage{bm}
\usepackage{graphicx}
\usepackage{float}
\usepackage{subcaption}
\usepackage{booktabs}
\usepackage{multirow}
\usepackage{algorithm}
\usepackage{algorithmic}
\usepackage{enumitem}

\usepackage{url,hyperref}
\usepackage[dvipsnames]{xcolor}
\usepackage{placeins}
\usepackage{threeparttable}

\usepackage[capitalise]{cleveref}

\usepackage[round]{natbib}
\usepackage[sc]{mathpazo}

\newtheorem{theorem}{Theorem}
\newtheorem{proposition}{Proposition}
\newtheorem{lemma}{Lemma}
\newtheorem{corollary}{Corollary}
\newtheorem{definition}{Definition}
\newtheorem{assumption}{Assumption}
\newtheorem{standingass}{Standing Assumption}
\newtheorem{remark}{Remark}

\newcommand{\tran}{^\top}
\newcommand{\R}{\mathbb{R}}
\DeclareMathOperator{\interior}{int}
\DeclareMathOperator{\Proj}{Proj}

\DeclareMathOperator*{\argmax}{arg\,max}

\newcommand{\normM}[1]{\|#1\|_M}

\title{Perturbed utility Markovian traffic equilibrium:\\ theory and computation}
\author{Rui Yao\thanks{Technion -- Israel Institute of Technology, \texttt{rui.yao@technion.ac.il}} \and Kenan Zhang\thanks{\'Ecole Polytechnique F\'ed\'erale de Lausanne (EPFL), \texttt{kenan.zhang@epfl.ch}}}
\date{}

\begin{document}

\maketitle

\begin{abstract}
Large-scale traffic assignment requires equilibrium models that are both behaviorally plausible and computationally tractable. 
This paper develops a perturbed utility Markovian equilibrium (PUME) framework that preserves the scalability of link-based Markovian traffic equilibrium models and extends their applicability to settings with boundary choice probabilities, undiscounted network loading, and general link interactions. 
As the behavioral basis of PUME, we first develop the perturbed utility Markovian choice model (PUMCM) in which the Bellman optimality operator is defined through a convex surplus function whose gradient directly yields the optimal policy.
The model generalizes existing additive random utility (ARUM) Markovian choice models and admits both interior and boundary choice probabilities. Accordingly, unattractive links can receive zero flow without imposing ex ante choice-set restrictions as in existing ARUM models. 
We establish conditions under which the corresponding Markov decision problem is well posed and yields a proper demand mapping. We then formulate the equilibrium as a variational inequality (VI) problem on the dual cost space and establish its existence and uniqueness. Particularly, the VI formulation of PUME accommodates non-separable and asymmetric cost structures and thus offers a more flexible modeling framework than existing Markovian traffic equilibrium (MTE) models. 
% This serve as the demand foundation for the proposed PUME. We formulate the resulting equilibrium problem as a variational inequality that accommodates nonseparable and asymmetric cost structures.
For computation, we develop a modified policy iteration method for network loading and a safeguarded accelerated meta-algorithm for computing equilibrium. Both algorithms are proven to be globally convergent and have demonstrated satisfactory numerical performances. 
Experiments on benchmark and synthetic networks further show that the proposed framework is highly scalable and robust towards a wide variety of demand-supply settings.
\end{abstract}

\noindent\textbf{Keywords:} Markovian traffic equilibrium; perturbed utility; variational inequality; modified policy iteration; meta-algorithm

%%%%%%%%%%%%%%%%%%%%%%%%%%%%%%%%%%%%%%%%%%%%%%%%%%%%%%%%%%%%%%%%%%%%%%
%%                       SECTION 1: INTRODUCTION
%%%%%%%%%%%%%%%%%%%%%%%%%%%%%%%%%%%%%%%%%%%%%%%%%%%%%%%%%%%%%%%%%%%%%%
\section{Introduction}\label{sec:intro}

Traffic equilibrium models are used to predict network flow patterns and to support system design and management.
The classical deterministic user equilibrium (UE) builds on shortest-path routing behavior under flow-dependent link costs~\citep{sheffi1985urban}, while
stochastic user equilibrium (SUE)~\citep{daganzo1977stochastic} extends this framework to incorporate unobserved heterogeneity through random utility route choice models~\citep{benakiva1985discrete}. 
% In large networks, the computational bottleneck is typically the network loading given the routing decisions and its interaction with the routing updates~\citep{dial1971probabilistic}.
For equilibrium prediction, a route choice model must be behaviorally expressive, statistically consistent, and computationally scalable.
Behavioral expressiveness is needed to capture realistic substitution patterns among overlapping routes; statistical consistency requires the model used for prediction to be compatible with estimation; and computational tractability matters for large-scale applications.
Markovian formulations address these by modeling routing as sequential link choices in a Markov decision process (MDP). 
Accordingly, the full choice set is embedded in the network, as well as the substitution pattern~\citep{mai2015nested, oyama2022markovian}. 
Previous works also demonstrate that the parameters can be consistently estimated using Markovian models~\citep{fosgerau2013link,yaoperturbed,fosgerau_yao_MPURC_2026}. Moreover, the link-based formulation avoids explicit path enumeration while retaining closed-form choice probabilities and addresses the computational challenge~\citep{akamatsu1997decomposition,fosgerau2013link}.
% , avoiding explicit path enumeration while retaining closed-form choice probabilities~\citep{akamatsu1997decomposition,fosgerau2013link}. 
% This means the network itself defines a full choice set.
% Consequently, substitution pattern emerges directly from the network structure~\citep{mai2015nested, oyama2022markovian}, and parameters can be consistently estimated~\citep{fosgerau2013link,yaoperturbed,fosgerau_yao_MPURC_2026}.
In terms of equilibrium modeling,
\cite{baillon2008markovian} proposes a dual formulation of Markovian traffic equilibrium (MTE) on the link-cost space that scales with the network rather than the path set~\citep{baillon2008markovian}. Its extensions based on network generalized extreme value (GEV) models accommodate richer correlation structures~\citep{oyama2022markovian}.

Despite this progress, three issues limit the applicability of MTE in general traffic equilibrium modeling and computation. 
% how broadly Markovian models can serve as a reliable building block for traffic equilibrium modeling and computation.
First, Markovian route choice models used in existing MTE formulations are mostly based on additive random utility models (ARUM), and therefore yield full support of choice probabilities, i.e., every feasible outgoing link is chosen with strictly positive probability~\citep{mcfadden1981econometric,fosgerau2013link, mai2015nested}. 
As a result, even dominated links carry positive flow, leading to unreasonably long detours unless they are excluded ex ante. In many applications, however, the corner solutions (zero-probability actions) are preferable as they generate more plausible behavioral predictions and practical insights~\citep{fosgerau2022perturbed, tan2024endogenously, yao2024perturbed}.
% Full support means every feasible outgoing link is chosen with strictly positive probability~\citep{fosgerau2013link, mai2015nested}, so dominated links can still carry positive flow, leading to implausible long detours, unless they are excluded ex ante.
% Admitting corner solutions (zero-probability actions) requires a Markovian choice framework whose theory and computational guarantees remain valid on the boundary of the simplex, which existing MTE theory does not provide.
Second, existing solution algorithms based on the dual formulation of MTE repeatedly solve the network loading via value iterations. 
% solve a fixed-point problem in the network loading step , while 
% the function evaluation in the fixed-point iterations relies on dynamic programming (DP). 
% solve a dynamic programming (DP) fixed-point problem for network loading. 
Despite its algorithmic simplicity, the corresponding Bellman operator is only guaranteed to be nonexpansive but not contractive in the undiscounted case ($\gamma=1$), thus the classic value iteration may fail to converge and thus break the network loading~\citep{mai2022undiscounted}. 
% This issue causes particular challenges in MTE computation. 
% so value iteration can fail even when a unique fixed point exists~\citep{mai2022undiscounted}.
% When corner solutions are admitted, policies may assign zero probability to some actions; without additional structure, a policy that concentrates flow on cycles may fail to reach the destination, breaking transience and invalidating standard existence and convergence arguments.
Third, existing MTE formulations impose strong assumptions (e.g., symmetric Jacobian of link costs) so that the equivalent convex optimization problem exists~\citep{baillon2008markovian,oyama2022markovian}.
% Third, the convex optimization formulation of MTE relies on a potential structure, which in turn requires an integrable (symmetric-Jacobian) supply mapping~\citep{baillon2008markovian,oyama2022markovian}.
However, many settings of interest in transportation, such as multi-class interactions and non-separable cost structures, are naturally asymmetric and thus cannot be modeled in the classic MTE framework. On the other hand, the variational inequality (VI) formulation provides a more flexible expression of the equilibrium conditions~\citep{dafermos1980traffic,smith1979existence}, though it has not yet been adopted in MTE. 
% Many settings of interest, such as multi-class interactions, pricing/control, and nonseparable cost specifications, are naturally asymmetric, where a variational inequality (VI) formulation is the appropriate equilibrium model~\citep{dafermos1980traffic,smith1979existence}, yet the Markovian traffic equilibrium literature has not adopted VI formulations.

Motivated by these gaps, this paper develops a \textit{perturbed utility Markovian equilibrium} (PUME) framework. 
% that consists of three key components: 
% i) a perturbed utility Markovian choice model (PUMCM) that admits both interior and corner solutions, ii) a VI formulation of equilibrium defined on the general cost space that accommodates both symmetric and asymmetric cost structures, and iii) efficient algorithms with global convergence guarantees. 
As the foundation of PUME, we propose a perturbed utility Markovian choice model (PUMCM) that characterizes the stochastic route choice as an undiscounted MDP regularized by a convex perturbation function. 
Accordingly, the Bellman optimality operator is defined by a surplus (choice-probability generating) function, whose gradient yields (link) choice probabilities at each state (node). We further derive conditions under which the induced choice map is well defined on the full simplex and able to produce corner solutions (zero choice probabilities).
PUMCM essentially generalizes the ARUM distributional assumption in classic MTE~\citep{hofbauer2002global,fosgerau2012theory} and covers choice behaviors beyond the ARUM class. It also extends the static perturbed utility route choice (PURC) model of \citet{fosgerau2022perturbed,yao2024perturbed} to the Markovian setting, while retaining its scalability.
% while retaining the scalability of recursive models.
% On the demand side, we define the perturbed utility Markovian choice model (PUMCM), an undiscounted MDP in which a convex perturbation function replaces the ARUM distributional assumption~\citep{hofbauer2002global,fosgerau2012theory}.
% The Bellman recursion is written in terms of a surplus (choice-probability generating) function $H_s$ at each state, and we impose mild conditions under which the induced choice map is well defined on the full simplex and can yield corner solutions without excluding actions ex ante.
% This demand model contains recursive logit, nested logit, and network GEV models as special cases and allows choice probability patterns outside the ARUM class.
To efficiently solve PUMCM, and accordingly the network loading, we derive conditions that ensure the policy evaluation is well-posed in the undiscounted setting, which is typically more challenging than the discounted counterpart, and propose a modified policy iteration (MPI) with a global convergence guarantee. 
% On the DP side, we develop a Bellman theory that makes policy evaluation well posed in undiscounted cases. We establish that, under mild conditions, any policy consistent with a Bellman equation will be \textit{proper}~\citep{bertsekas2012dynamic}, which in turn yields a globally convergent modified policy iteration (MPI) routine for network loading.
On top of PUMCM, we formulate PUME as a VI problem in the general cost space with a monotone supply mapping. We establish the equilibrium existence and uniqueness under mild conditions, and further develop an efficient and globally convergent meta-algorithm that combines the first-order VI method with safeguarded acceleration.

% as a cost-space VI with a monotone supply mapping, which accommodates asymmetric congestion models as well as symmetric (potential) cases.
% We establish equilibrium existence and uniqueness under mild conditions and develop globally convergent meta-algorithms by combining first-order VI methods with safeguarded acceleration.

Our contributions are summarized as follows.
\begin{enumerate}
    \item \textit{Perturbed utility Markovian choice model.}  
    We develop PUMCM that generalizes existing ARUM-based Markovian choice models and further admits both interior and boundary choice probabilities. Particularly, we characterize how the regularity of surplus function yields corner solutions and propagates through value functions, policies, and induced flows, whose well-posedness is essential for convergence.

    \item \textit{Globally convergent undiscounted PUMCM network loading.}
    Using structural properties of PUMCM, we establish the existence and uniqueness of the optimal value function and prove global convergence of modified policy iteration (MPI) for the whole model class. We further obtain a tunable local linear rate whose factor depends on the MPI evaluation depth.

    \item \textit{Perturbed utility variational Markovian equilibrium and globally convergent solution algorithms.}
    We extend the classic optimization formulation of MTE by constructing PUME as a monotone VI problem that accommodates asymmetric link interactions and cost structures. For the computation of equilibria, we adopt a meta-algorithm with a customized merit-function safeguard, prove its global convergence, and demonstrate its efficiency on several benchmark networks. 
    % We extend the cost-space equilibrium characterization from potential settings to a monotone VI that 
    % For computation, we combine monotone VI methods with acceleration oracles under a merit-based safeguard and prove global convergence.
\end{enumerate}

The remainder of the paper is organized as follows.
Section~\ref{sec:lit} reviews related work and positions PUME relative to existing models.
Section~\ref{sec:PUMCM} defines PUMCM, establishes its main properties, and derives its induced network flows (demand). 
% as a  Markov network model, the surplus conditions, and derives the PUMCM demand function.
Section~\ref{sec:equilibrium} first formulates PUME, establishes existence and uniqueness, and then establishes its equivalent variational Nash equilibrium.
Section~\ref{sec:algorithms} presents the solution algorithms, including the MPI for network loading and the meta-algorithm for equilibrium update, along with their convergence analysis.
% MPI for network loading and the meta-algorithm for equilibrium computation.
Section~\ref{sec:spec} discusses how to specify key components of PUME in practice and provides examples of equilibrium outcomes under different settings. 
% using several surplus functions to show the induced choice behavior, the interaction with the network supply map, and the performance of the proposed algorithms.
Section~\ref{sec:experiments} conducts experiments to test different algorithm configurations and demonstrate the scalability and robustness on benchmark transportation networks, along with a series of sensitivity analyses. 
% on benchmark transportation networks and sensitivity analyses on scalability and cost symmetry. 
% potential and non-potential supply settings.
Section~\ref{sec:conclusion} finally concludes this study with several future directions. 

\section{Related literature}\label{sec:lit}

\subsection{Stochastic traffic equilibrium}\label{sec:lit:path_to_link}
Same as classic traffic equilibrium, stochastic user equilibrium (SUE) can be interpreted as two components coupled via costs: i) a \textit{supply}  mapping that translates link costs into flow supply (e.g., the inverse of a link cost function), and ii) a \textit{demand} (route choice) mapping that generates flow from a given cost vector via a stochastic choice model~\citep{daganzo1977stochastic,sheffi1985urban}.
Specifically, the supply mapping captures the congestion effect. When the cost structure satisfies certain conditions (e.g., symmetric Jacobian), the supply mapping is integrable and leads to the Beckmann-type optimization formulations~\citep{daganzo1977stochastic,vovsha_link-nested_1998,zhou2012c,kitthamkesorn2013path}. In the more general settings (e.g., heterogeneous users, non-separable and/or asymmetric link costs), the supply mapping is non-integrable and thus the variational inequality (VI) formulations are adopted~\citep{dafermos1980traffic, facchinei2003finite}.

On the other hand, the demand mapping in SUE replaces deterministic routing behavior with stochastic route choice to capture unobserved heterogeneous preferences and perception errors~\citep{benakiva1985discrete}. One commonly used demand mapping is the path-based additive random utility model (ARUM), where each route is treated as an alternative. 
However, the path-based formulations constantly face two technical challenges. The first one is the \emph{path correlation due to overlap}. Since many paths share links and subpaths, the random error in a choice model should reflect such correlation across alternatives. As a result, models that rely on the independence of irrelevant alternatives (IIA) assumption, such as the multinomial logit (MNL) model, can yield implausible path substitution patterns~\citep {ben1999extended,fosgerau2013link,kitthamkesorn2013path}, while those with structured error specifications~\citep[e.g.][]{bekhor2001stochastic} and systematic utility corrections~\citep[e.g.,][]{kitthamkesorn2013path,duncan2020path} require apriori assumptions on the correlation. 
The second challenge regards the \emph{choice-set representation}. All path-based models must define and maintain a set of feasible routes, which can grow exponentially with the network size~\citep{bekhor2005investigating}. The two challenges intertwine and together lead to the major computational bottleneck in large-scale equilibrium models~\citep{dial1971probabilistic,vovsha_link-nested_1998}. 

To avoid explicit path enumeration or iterative path-generation heuristics, link-based models are proposed, which decompose route choice into sequential link choices toward the destination. These models naturally fall into the Markovian framework, where each traveler's route corresponds to a sample path of a Markov chain induced by the routing policy~\citep{dial1971probabilistic, bell1995alternatives,akamatsu1996cyclic, akamatsu1997decomposition}. Particularly, the recursive logit model and its extensions model the route choice as a special type of undiscounted Markov decision process (MDP), where the Bellman optimality equation directly yields closed-form link choice probabilities~\citep{fosgerau2013link,mai2015nested,oyama2022markovian}. 
Although overcoming the correlation and scalability issues of path-based models, the link-based Markovian models induce two additional problems. First, the current ARUM-based models have full support for choice probabilities. Consequently, a path with extensive long detours could still observe a positive flow unless ruled out ex ante~\citep{mcfadden1981econometric,fosgerau2013link,oyama2022markovian,oyama2019prism}. 
Secondly, since the underlying MDP is undiscounted, the Bellman optimality operator is generally non-contractive, thus value iterations may not converge even though a unique fixed point exists~\citep{mai2022undiscounted}.

The PUME proposed in this study takes the same supply--demand viewpoint, also known as the dual formulation~\citep{baillon2008markovian}, but makes extensions on both sides. 
On the supply side, we relax the strong separable and symmetric conditions imposed in classic MTE models and define the supply mapping in a general cost space. 
On the demand side, we propose PUMCM that generalizes the ARUM-based models and admits corner solutions. 
We further derive conditions that ensure well-posedness of the Bellman equation that enables the convergence of modified policy iteration~\citep{puterman1978modified} in the undiscounted MDP setting.

\subsection{Perturbed utility theory and its network applications}\label{sec:lit:pu}

Perturbed utility models (PUM) replace the distributional assumption in classic random utility choice models with a convex perturbation function~\citep{hofbauer2002global,fosgerau2012theory, fudenberg2015stochastic, allen2019satisficing}.
% on additive random shocks with a convex perturbation function~\citep{hofbauer2002global,fosgerau2012theory, fudenberg2015stochastic, allen2019satisficing}.
For a static setting where individuals choose among alternatives with systematic utilities $V\in\mathbb{R}^{|\mathcal{A}|}$, PUM defines choice probabilities as the unique solution of a perturbed expected-utility maximization,
\begin{align}\label{eq:PUM_static}
    p_*(V) = \arg\max_{p\in\Delta}\ \langle p, V\rangle-F^*(p),
\end{align}
where $F^*$ is the convex perturbation function~\citep{Rockafellar+1970}.

Problem~\ref{eq:PUM_static} is associated with a conjugate (surplus) function $F(V)=\sup_{p\in\Delta}\{\langle p,V\rangle-F^*(p)\}$ that is convex in utility $V$, and, under standard regularity assumptions, induces a closed-form choice mapping $p_*(V)=\nabla F(V)$\footnote{Hereafter, we use $*$ as subscript in the optimal value and policy to distinguish superscript $*$ in the perturbation function.}, which generalizes the classical Williams-Daly-Zachary theorem for ARUM~\citep{mcfadden1981econometric}.
This convex-dual representation therefore provides a unified framework for discrete choice~\citep{fosgerau2012theory}. Essentially, PUM is a strict generalization of ARUM: while every ARUM admits a PUM representation (e.g., MNL corresponds to the Shannon-entropy perturbation), PUM can characterize behavioral phenomena outside the ARUM class, 
% PUM can produce comparative statics outside the ARUM class, 
such as complementarity effects where increasing one alternative's utility raises the choice probability of another~\citep{hofbauer2002global, fosgerau2026sensitivity}. 
In addition, PUM accommodates perturbations that yield corner solutions, rather than full support restricted in the ARUM class.

PUM has recently been introduced to transportation research. 
\citet{fosgerau2022perturbed} propose a perturbed utility route choice (PURC) model in which origin-destination (OD)-based flows are characterized as the solution to a convex program over the feasible flow polyhedron with a link-additive perturbation.
% Notably, the link-additive structure induces substitution patterns directly from the network topology, and the perturbation function permits zero flow on sufficiently costly links, removing the need for exogenous choice set construction.
Follow-up studies show that certain truncated ARUM-based choice models, which impose zero flow on sufficiently costly links, have equivalent formulations in the framework of PURC and thus provide the corner solutions of PURC with an interpretation of endogenous choice set generation~\citep{tan2024endogenously,tan2026endogenous}.
% Moreover, \cite{tan2024endogenously,tan2026endogenous} show that certain truncated ARUM-based choice models, where sufficiently costly alternatives receive zero flow, also take the form of a PURC, providing an endogenous choice-set interpretation of corner solutions.
\citet{yao2024perturbed} propose a new SUE based on PURC and develop a fast solution algorithm. 
% embed PURC in a traffic equilibrium setting and develop a fast solution algorithm.

PUMCM proposed in this study can be viewed as the dynamic counterpart of PURC in the Markovian setting, or a variant of dynamic discrete choice models~\citep{rust1987optimal, chiong2016duality} with perturbed utility.
% More broadly, PUMCM can be viewed as a dynamic perturbed utility model in the tradition of dynamic discrete choice~\citep{rust1987optimal}, specialized to absorbing-MDP network routing.
% The present paper develops the dynamic counterpart of PURC in this Markovian setting.
% The perturbed utility Markovian choice model (PUMCM) replaces the maximum expected utility operator at each state of the Bellman recursion (e.g., the log-sum-exp operator in the recursive logit special case) with a general surplus function satisfying mild convexity conditions (see details in Section~\ref{sec:PUMCM}).
% Combined with structural MDP assumptions, these conditions yield a well-defined Bellman operator with a unique fixed point and a continuously differentiable optimal policy map.
Similar to the general PUM, PUMCM strictly generalizes existing ARUM-based dynamic models (e.g., recursive logit), and importantly, accommodates corner solutions when particular perturbation functions are applied. 
% PUMCM strictly contains logit, nested logit, and network GEV as special cases, and, crucially, also accommodates sparse perturbations (Tsallis-entropy, sparsemax) that assign zero probability to dominated alternatives, thereby producing corner solutions that are not supported by existing recursive choice models.

% A distinct strand of recent work in machine learning replaces the softmax (logit) mapping with sparse alternatives that map some probability mass exactly to zero.
% The $\alpha$-entmax family~\citep{peters2019sparse, correia2019adaptively} generalizes softmax through a Tsallis entropy perturbation parameterized by $\alpha\ge 1$: softmax (logit) corresponds to $\alpha=1$, while $\alpha>1$ produces sparse choice probabilities.
% The limiting case $\alpha=2$ is sparsemax (quadratic perturbation)~\citep{martins2016softmax}, whose choice map is the Euclidean projection onto the simplex.
% These mappings are well-defined PUM, but to our knowledge they have not been analyzed in network equilibrium settings.
% Their boundary behavior (corner solutions) falls outside the standard Legendre-type regularity~\citep{Rockafellar+1970} assumed in the existing Markovian equilibrium literature, motivating the general conditions for surplus functions developed in Section~\ref{sec:PUMCM}.

% \subsection{Equilibrium algorithms and acceleration}\label{sec:lit:algo}
\subsection{Solution algorithms for traffic equilibrium}\label{sec:lit:algo}
The solution procedure of traffic equilibrium with dual formulations typically alternates between network loading (i.e., executing the demand mapping from costs to flows) and cost update~\citep{sheffi1985urban,patriksson1994traffic}. For Markovian route choice models, the network loading requires solving the optimal value as the solution to a fixed-point problem defined on the Bellman optimality operator. As discussed in Section~\ref{sec:lit:path_to_link}, the Bellman optimality operator is generally non-expansive but not contractive. Therefore, value iteration may fail to converge in certain scenarios~\citep{mai2022undiscounted}. 
% For general MDP problems, modified policy iteration (MPI) \citep{puterman1978modified,puterman1994markov} has been well established in the discounted setting, though its convergence in undiscounted MDPs, especially for the cases with corner solutions, has not been analyzed. 
This study tackles this issue by deriving conditions that ensure well-posed policy evaluation and proposes a modified policy iteration (MPI) method~\citep{puterman1978modified,puterman1994markov} to solve the optimal value with a global convergence guarantee. Notably, MPI has only been discussed in the discounted setting in the literature. To the best of our knowledge, its convergence in the undiscounted setting of PUMCM, especially for the cases with corner solutions, is first analyzed in this study.

As for the cost update, existing algorithms mainly hinge on whether an equivalent optimization exists, or equivalently, whether the supply mapping can be integrated into a potential function. 
When a potential function exists, the gradient-based methods apply~\citep[e.g.,][]{powell1982convergence}, and acceleration schemes have been proposed for specific formulations~\citep{oyama2022markovian,yao2024perturbed}. 
% MSA and related averaging schemes \citep{powell1982convergence} are robust but can be slow, and several works report substantial improvements from acceleration in specific formulations, including momentum-based methods for network-GEV MTE~\citep{oyama2022markovian} and quasi-Newton acceleration for PURC \citep{yao2024perturbed}.
% Dual formulations are often attractive in these settings because they replace OD-specific link flows by link costs which only subject to simple lower-bound constraints~\citep{baillon2008markovian}.
Otherwise, the equilibrium is often expressed as a VI and solved with projection-type methods~\citep{dafermos1980traffic,smith1979existence,facchinei2003finite}, which all belong to the first-order VI methods.
The structure of PUME allows for a more flexible cost structure, thus requiring a VI formulation. To improve the computational efficiency, we develop a meta-algorithm that combines the first-order VI method with safeguarded acceleration. We further prove that the meta-algorithm is globally convergent for PUME and thus provide a theoretical guarantee for the solution quality in addition to its practical performance.
\section{Perturbed utility Markovian choice model}\label{sec:PUMCM}
This section defines the perturbed utility Markovian choice model (PUMCM), which generalizes existing ARUM-based Markovian choice models by replacing the distributional assumption on random utility with a convex perturbation function.

\subsection{Choice problem as an absorbing MDP}\label{sec:PUMCM:setting}
We model a traveler's route choice toward a destination $d$ as a finite-state, undiscounted MDP with a single absorbing termination state $d$. Let
non-terminal states $\mathcal{S}$ represent network nodes, and actions $\mathcal{A}_s$ represent the feasible outgoing links at state $s$.
% A policy $\pi$ specifies, for each $s \in \mathcal{S}$, a distribution $\pi(\cdot|s)$ over $\mathcal{A}_s$.
Besides, $P(\cdot|s,a)$ denotes the state transition probabilities among non-terminal states $\mathcal{S}$ for a state-action pair $(s, a)$, and $\tilde P(\cdot|s,a)$ denotes the transition defined on the extended state space $\mathcal{S}\cup\{d\}$. Accordingly, we have $\tilde P(s'|s,a) = P(s'|s,a)$ for non-terminal state $s'\in\mathcal{S}$ and $\tilde P(d|s,a) = 1 - \sum_{s'\in\mathcal{S}} P(s'|s,a)$ for the destination $d$.
% Write $P(s'|s,a)\coloneqq \tilde P(s'|s,a)$ for its restriction to nonterminal states $s' \in \mathcal{S}$; the residual probability $\tilde P(d|s,a) = 1 - \sum_{s'\in\mathcal{S}} P(s'|s,a)$ is absorbed at the destination $d$.

For each $s\in\mathcal{S}$, we collect the non-terminal state transitions into a compact matrix form $P(\cdot|s,\cdot)\in\R^{|\mathcal{A}_s|\times|\mathcal{S}|}$, whose $(a,s')$-entry is $P(s'|s,a)$.
% we collect these nonterminal transition probabilities into a matrix $P(\cdot|s,\cdot)\in\R^{|\mathcal{A}_s|\times|\mathcal{S}|}$ whose $(a,s')$-entry is $P(s'|s,a)$.
Similarly, we define the vector form of rewards  $u(s,\cdot)\in\R^{|\mathcal{A}_s|}$, where each element $u(s,a) \in \R$ denotes the one-step reward (instantaneous utility) for each state--action pair, and the vector of values $V \in \R^{|\mathcal{S}|}$. Accordingly, the state-action value at each state $Q_s: \R^{|\mathcal{S}|}\to \R^{|\mathcal{A}_s|}$ is given by 
\begin{equation}\label{eq:Q_def}
    Q_s(V) \coloneqq u(s,\cdot) + P(\cdot|s,\cdot)\,V \;\in\; \R^{|\mathcal{A}_s|}, \qquad s \in \mathcal{S}.
\end{equation}

% Let $\Delta_s \coloneqq \{\pi(\cdot|s) \in \R_+^{|\mathcal{A}_s|} : \mathbf{1}\tran \pi(\cdot|s) = 1\}$ denote the probability simplex on $\mathcal{A}_s$. 
Given a policy $\pi$ that maps from each $s \in \mathcal{S}$ to a distribution $\pi(\cdot|s)$ over $\mathcal{A}_s$, i.e., $\pi(\cdot|s)\in \Delta_s \coloneqq \{x \in \R_+^{|\mathcal{A}_s|}: \mathbf{1}\tran x = 1\}$, we further define the policy induced transition matrix $P_\pi \in \R^{|\mathcal{S}| \times |\mathcal{S}|}$ that maps between non-terminal states with entries. Specifically, each entry in $P_\pi$ is given by
\begin{align}\label{eq:policy_transition}
    P_\pi(s,s') \coloneqq \sum_{a\in\mathcal{A}_s} \pi(a|s)\,P(s'|s,a), \qquad s,s'\in\mathcal{S}.
\end{align}

% the policy induced transition matrix on $\mathcal{S}$ is the substochastic matrix $P_\pi \in \R^{|\mathcal{S}| \times |\mathcal{S}|}$ with entries
% \[
% P_\pi(s,s') \coloneqq \sum_{a\in\mathcal{A}_s} \pi(a|s)\,P(s'|s,a), \qquad s,s'\in\mathcal{S}.
% \]
% For a value function $V \in \R^{|\mathcal{S}|}$, the state--action value vector (Q-value) at state $s$ is
% \begin{equation}\label{eq:Q_def}
%     Q_s(V) \coloneqq u(s,\cdot) + P(\cdot|s,\cdot)\,V \;\in\; \R^{|\mathcal{A}_s|}, \qquad s \in \mathcal{S},
% \end{equation}
% whose $a$-th component $u(s,a) + \sum_{s'} P(s'|s,a)\,V(s')$ equals the immediate reward plus the expected continuation value of choosing action $a$.

\subsection{Surplus functions and induced choice map}\label{sec:surplus}

% \kz{(MOVED FROM REVIEW) A distinct strand of recent work in machine learning replaces the softmax (logit) mapping with sparse alternatives that map some probability mass exactly to zero.
% The $\alpha$-entmax family~\citep{peters2019sparse, correia2019adaptively} generalizes softmax through a Tsallis entropy perturbation parameterized by $\alpha\ge 1$: softmax (logit) corresponds to $\alpha=1$, while $\alpha>1$ produces sparse choice probabilities.
% The limiting case $\alpha=2$ is sparsemax (quadratic perturbation)~\citep{martins2016softmax}, whose choice map is the Euclidean projection onto the simplex.
% These mappings are well-defined PUM, but to our knowledge they have not been analyzed in network equilibrium settings.
% Their boundary behavior (corner solutions) falls outside the standard Legendre-type regularity~\citep{Rockafellar+1970} assumed in the existing Markovian equilibrium literature, motivating the general conditions for surplus functions developed in Section~\ref{sec:PUMCM}.}

For each state $s\in\mathcal{S}$, we introduce a \emph{surplus} function $H_s: \R^{|\mathcal{A}_s|}\to \R$ that satisfies the following conditions throughout the paper. 
\begin{standingass}[Base conditions of surplus]\label{asm:surplus}
    For every $s \in \mathcal{S}$,
    \begin{enumerate}[nosep, label=\textup{(A\arabic*)}]
        \item \label{asm:surplus_convex} \textbf{Convexity and smoothness:} $H_s$ is convex and continuously differentiable ($C^1$) on all of $\R^{|\mathcal{A}_s|}$.
        \item \label{asm:surplus_grad} \textbf{Simplex gradient:} $\nabla H_s(Q) \in \Delta_s$ for every $Q \in \R^{|\mathcal{A}_s|}$.
        \item \label{asm:surplus_equiv} \textbf{Translation equivalence:} $H_s(Q + \alpha\mathbf{1}) = H_s(Q) + \alpha$ for every $Q \in \R^{|\mathcal{A}_s|}$, $\alpha \in \R$.
    \end{enumerate}
\end{standingass}
Standing assumption~\ref{asm:surplus} extends the conditions of the choice probability generating function for ARUM proposed in \citet{FOSGERAU20131} to perturbed utility models.
Specifically, Condition~\ref{asm:surplus_convex} relaxes the \textit{alternating sign condition} in Williams-Daly-Zachary Theorem~\citep{mcfadden1981econometric} to accommodate richer behavioral patterns such as complementarity~\citep{hofbauer2002global}. 
Condition~\ref{asm:surplus_grad} is similar to the modeling assumption in ARUM-based Markovian models that ensures the induced choice map is well defined (see Lemma~\ref{lem:choice_map}). 
Differently, Condition~\ref{asm:surplus_grad}  does not require $\nabla H_s$ to lie in the interior $\Delta_s$, which would otherwise rule out corner solutions with zero probability. It thus gives a strict generalization of PUMCM over ARUM-based models. 
Condition~\ref{asm:surplus_grad} also implies that $H_s$ is non-decreasing as $\nabla H_s \geq 0$, and that $H_s$ is Lipschitz continuous with constant $L_\infty=1$ as $||\nabla H_s||_1 = 1$. 
Condition~\ref{asm:surplus_equiv} is commonly assumed for expected maximum operators (e.g., log-sum-exp) and enables the normalization needed for the well-posedness of the Bellman operator (see Proposition~\ref{prop:well-posedness}). This assumption also underpins the single-level regression-based estimation framework proposed in~\citet{yaoperturbed}. 
% Finally, Assumption~\ref{asm:surplus_extra} is optional and only invoked when the smoothness of induced demand flows is desired in applications~\citep{yao2026maas}.

Below is an additional condition that strengthens the analytical properties of PUME, such as the smoothness of the induced optimal link demand (see Proposition~\ref{prop:demand}).
\begin{assumption}[Second-order smoothness]\label{asm:surplus_extra}
    $H_s$ is twice continuously differentiable ($C^2$) on all of $\R^{|\mathcal{A}_s|}$.
\end{assumption}

% \begin{assumption}[Surplus function]\label{ass:surplus}
% For every $s \in \mathcal{S}$:
% \begin{enumerate}[label=\textup{(A\arabic*)}]
%     \item\label{A1}\textbf{Convexity and smoothness:}\;
%       $H_s$ is convex and continuously differentiable ($C^1$) on all of $\R^{|\mathcal{A}_s|}$.
%     \item\label{A2}\textbf{Simplex gradient:}\;
%       $\nabla H_s(Q) \in \Delta_s$ for every $Q \in \R^{|\mathcal{A}_s|}$.
%     \item\label{A3} \textbf{Translation equivalance:}\;
%       $H_s(Q_s + \alpha\mathbf{1}) = H_s(Q) + \alpha$ for every $Q \in \R^{|\mathcal{A}_s|}$, $\alpha \in \R$.
%     \item \label{A4}\textbf{Second-order smoothness}\;(optional):\;
%       $H_s$ is twice continuously differentiable ($C^2$) on all of $\R^{|\mathcal{A}_s|}$.
% \end{enumerate}
% \end{assumption}

We now connect the surplus function to the perturbed utility maximization problem and derive the choice map. 
Define the perturbation function $H_s^*:\Delta_s\to\R\cup\{+\infty\}$ as the convex conjugate of $H_s$ restricted to the simplex $\Delta_s$:
\begin{equation}\label{eq:conjugate}
  H_s^*(\pi) \coloneqq \sup_{Q \in \R^{|\mathcal{A}_s|}} \bigl\{\pi\tran Q - H_s(Q)\bigr\}, \qquad \pi \in \Delta_s.
\end{equation}

Adopting the Fenchel-Young (FY) duality~\citep{Rockafellar+1970}, we prove that the surplus function $H_s$ corresponds to the maximum expected perturbed utility defined by the perturbation function $H_s^*$, and the gradient $\nabla H_s$ corresponds to the expected perturbed utility maximizer. These results are formally presented in the following lemma and proved in Appendix~\ref{app:proof_lem_choice_map}.

\begin{lemma}[FY duality and induced choice map]\label{lem:choice_map}
% Suppose $H_s$ satisfies Conditions~\ref{A1}--\ref{A3}, then 
For any $Q \in \R^{|\mathcal{A}_s|}$,
\begin{enumerate}[nosep, label=\textup{(\roman*)}]
    \item\label{FY:ineq} \textbf{Weak duality:} $\pi\tran Q - H_s^*(\pi) \le H_s(Q), \quad \forall \pi \in \Delta_s$.
    
    \item\label{FY:max} \textbf{Strong duality:} $H_s(Q) = \max_{\pi \in \Delta_s} \bigl\{\pi\tran Q - H_s^*(\pi)\bigr\} = \nabla H_s(Q)\tran Q - H_s^*(\nabla H_s(Q))$ with optimizer $\pi_* = \nabla H_s(Q)$.
\end{enumerate}
\end{lemma}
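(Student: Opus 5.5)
Weak duality follows directly from the definition \eqref{eq:conjugate}. For any $\pi\in\Delta_s$ and any fixed $Q$, the supremum defining $H_s^*(\pi)$ is at least its value at $Q$. Hence $H_s^*(\pi)\ge \pi\tran Q - H_s(Q)$, which rearranges to \ref{FY:ineq}. The case $H_s^*(\pi)=+\infty$ is trivially included. Note also that $H_s^*(\pi)>-\infty$ always, since the supremum is over a nonempty set.

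For strong duality, I set $\pi_*\coloneqq\nabla H_s(Q)$. By Condition~\ref{asm:surplus_grad}, $\pi_*\in\Delta_s$, so it is an admissible point of the maximization. I then use convexity and differentiability from Condition~\ref{asm:surplus_convex} through the gradient inequality
\[
H_s(Q')\ge H_s(Q)+\pi_*\tran(Q'-Q)\quad\text{for all } Q'\in\R^{|\mathcal{A}_s|}.
\]
Rearranged, this reads $\pi_*\tran Q'-H_s(Q')\le \pi_*\tran Q-H_s(Q)$ for all $Q'$. The supremum in \eqref{eq:conjugate} at $\pi_*$ is therefore attained at $Q'=Q$, giving
\[
H_s^*(\pi_*)=\pi_*\tran Q-H_s(Q),
\]
which is finite. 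Rearranging gives $H_s(Q)=\pi_*\tran Q-H_s^*(\pi_*)$. Combined with \ref{FY:ineq}, the value $H_s(Q)$ is an upper bound on $\pi\tran Q-H_s^*(\pi)$ over $\Delta_s$ and is attained at $\pi_*$. So the supremum is a maximum, equal to $H_s(Q)$, with optimizer $\nabla H_s(Q)$. This proves \ref{FY:max}.

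The only subtle point is that the conjugate is restricted to $\Delta_s$ rather than taken over all of $\R^{|\mathcal{A}_s|}$. I must therefore make sure the candidate maximizer lies in the domain, and Condition~\ref{asm:surplus_grad} guarantees exactly that. No closedness or biconjugation argument (Fenchel--Moreau) is needed, because the gradient inequality yields equality directly.

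The statement says "with optimizer", and uniqueness is not claimed. If the text intends uniqueness, I would remark that it requires strict convexity of $H_s^*$ on $\Delta_s$, equivalently differentiability of $H_s$ together with the Fenchel--Young equality case. Any maximizer $\pi$ satisfies $H_s(Q)+H_s^*(\pi)=\pi\tran Q$. Hence $Q$ maximizes $\pi\tran Q'-H_s(Q')$, so $\pi=\nabla H_s(Q)$ by first-order optimality of this concave differentiable function. This also gives uniqueness for free, and I would include it as a closing remark. Translation equivalence, Condition~\ref{asm:surplus_equiv}, is not needed here.
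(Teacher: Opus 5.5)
Your proof is correct and is essentially the paper's: weak duality by rearranging the definition \eqref{eq:conjugate}, then the gradient inequality at $Q$ to pin down $H_s^*(\nabla H_s(Q))=\nabla H_s(Q)\tran Q-H_s(Q)$ and a sandwich with weak duality, where you state explicitly what the paper leaves implicit, namely that Condition~\ref{asm:surplus_grad} puts $\nabla H_s(Q)$ in $\Delta_s$. Your closing uniqueness argument is also sound and shows that no extra strict-convexity hypothesis is needed: any maximizer $\pi$ makes $Q$ a stationary point of $Q'\mapsto \pi\tran Q'-H_s(Q')$ and hence equals $\nabla H_s(Q)$, which is exactly the step the paper relies on without stating it when it later cites this lemma for uniqueness of $\pi_*$ in Proposition~\ref{prop:Vstar}.
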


The surplus function and its induced choice map are closely related to the ARUM-based (Markovian) choice models~\citep{mcfadden1981econometric, chiong2016duality} and recent work in machine learning regarding the sparse alternative of softmax~\citep[e.g.,][]{martins2016softmax,peters2019sparse,correia2019adaptively}. 
In Section~\ref{sec:spec_surplus}, we discuss available surplus functions and their properties in more detail.

\subsection{Bellman operators and well-posedness}\label{sec:operators}
In this section, we define the Bellman operator associated with the undiscounted MDP defined in Section~\ref{sec:PUMCM:setting} regularized by the perturbation $H_s^*$ introduced in Section~\ref{sec:surplus}. 
% and then introduce mild conditions that guarantee its well-posedness. We further demonstrate the automatic properness of the policy \kz{induced by} the Bellman operator. 
Since the model is undiscounted, the policy Bellman operator is not contractive in general~\citep{mai2022undiscounted}, and a Bellman equation may fail to have a finite solution.
However, we show that, under a strict negativity condition on the surplus, the conjugate structure of PUMCM rules out this problematic case for the whole model family.

% undiscounted Bellman operators induced by a perturbation $H_s^*$ and introduces mild conditions that ensure well-posedness.
% It then states the automatic properness property that underpins our Bellman theory.

We first define the expected perturbed stage reward for a given policy $\pi$ as the expected stage reward minus the perturbation, i.e., 
\begin{align}\label{eq:exp_reward}
    U_\pi(s) \coloneqq \pi(\cdot|s)\tran u(s,\cdot) - H_s^*(\pi(\cdot|s)).
\end{align}
% Accordingly, the Bellman operator is given by 
Because $H_s^*$ is not bounded by definition, we call a policy $\pi$ \emph{admissible} if $H_s^*(\pi(\cdot|s))<\infty$ for all $s\in\mathcal{S}$.
The non-admissible policies thus have $U_\pi(s)=-\infty$ for at least one state and cannot induce a finite policy value.
Accordingly, for an admissible policy, we define the Bellman operator on each state $s$ as
% For a given policy $\pi$, the policy stage reward is the expected one-step utility minus the perturbation,
% $U_\pi(s) \coloneqq \pi(\cdot|s)\tran u(s,\cdot) - H_s^*(\pi(\cdot|s))$.
% The policy Bellman operator is
\begin{equation}\label{eq:T_pi}
    T_\pi V(s) \coloneqq U_\pi(s) + P_\pi(s,\cdot) V,
\end{equation}
where $P_\pi$ is the policy-induced transition defined in Eq.~\eqref{eq:policy_transition}, and similarly the Bellman optimality operator as
% The Bellman optimality operator defined on each state $s$ is 
\begin{equation}\label{eq:T_star}
    T_* V(s) \coloneqq  \max_{\pi \in \Delta_s} \Bigl\{U_\pi(s) + P_\pi(s,\cdot) V\Bigr\} = \max_{\pi \in \Delta_s} \Bigl\{\pi\tran Q_s(V) - H_s^*(\pi)\Bigr\}
    \;=\; H_s(Q_s(V)).
\end{equation}
Note the first equality in Eq.~\eqref{eq:T_star} is derived from Eqs.~\eqref{eq:Q_def} and \eqref{eq:policy_transition}, and the second equality follows from Lemma~\ref{lem:choice_map}.
% the second equality follows from Lemma~\ref{lem:fenchel-young}\ref{FY:max}.
% The optimal policy at $V$ is $\pi_V(\cdot|s) \coloneqq \nabla H_s(Q_s(V))$, and satisfies $T_* V = T_{\pi_V} V$ by Lemma~\ref{lem:fenchel-young}\ref{FY:eq}.
% From Eq.~\eqref{eq:T_pi}, we can write the value function induced by policy $\pi$ as a fixed point

Accordingly, evaluating a fixed policy $\pi$ reduces to solving the linear system of its induced value function $V_\pi$:
\begin{align}
    V_\pi = T_\pi V_\pi = U_\pi + P_\pi V_\pi,
\end{align}
whose solution existence depends on whether the destination is reached almost surely under $\pi$. 
% It is worth noting that, as the MDP is undiscounted, $V_\pi$ is well defined only when the destination is reached almost surely. 
Policies that satisfy this condition are considered proper, which is formally defined below. 

% We first recall the standard notion of properness, which ensures this condition.
\begin{definition}[Proper policy,~\citet{bertsekas2012dynamic}]\label{def:proper}
A policy $\pi$ is proper if the termination state $d$ is reached with probability one from every non-terminal state.
Equivalently, $\rho(P_\pi) < 1$, where $\rho(\cdot)$ denotes the spectral radius.
\end{definition}

% A direct consequence of properness is that the matrix inverse $(\mathbb{I} - P_\pi)^{-1}$, also known as the fundamental matrix~\kz{(ref)}, exists and is nonnegative element-wise. Accordingly,  the value function $V_\pi = (\mathbb{I} - P_\pi)^{-1} U_\pi$ is well defined for every policy stage reward $U_\pi$. We state this formally in the next lemma.

If $\pi$ is proper and admissible, the Neumann series $\sum_{k=0}^\infty P_\pi^k$ converges to the nonnegative fundamental matrix $(\mathbb{I} - P_\pi)^{-1} \geq 0$ (i.e., expected state occupancy under $\pi$), and the policy evaluation equation has a unique finite solution $V_\pi = (\mathbb{I} - P_\pi)^{-1} U_\pi$~(see Lemma~\ref{lem:neumann} in Appendix~\ref{app:proof_well-posedness}).
The converse result, however, does not directly apply to a general undiscounted MDP. 
For instance, an infinite loop may result from a non-terminal state with zero (finite) expected stage reward,  
% For instance, a non-terminal state with zero expected stage reward, although finite, can lead to infinite loops, 
and thus the corresponding policy is not proper. The following assumption rules out this possibility and serves as the second standing assumption of this paper.

% The converse direction is less immediate: can an improper policy nevertheless satisfy a finite Bellman equation?
% In a general undiscounted MDP this can happen when a recurrent non-terminal class has zero average reward, and leads to infinite loops.
% The following two assumptions rule out this possibility in PUMCM.

% The following lemma states the direct consequence of policy properness and its proof is provided in Appendix~\ref{app:proof_lem_neumann}. 
% \begin{lemma}[Existence and uniqueness of policy-induced value]\label{lem:neumann}
% If policy $\pi$ is proper such that $\rho(P_\pi) < 1$, then the fundamental matrix $(\mathbb{I} - P_\pi)^{-1}$ \kz{(i.e., expected state occupancy under policy $\pi$)} exists and is nonnegative element-wise.
% Consequently, for every $U \in \R^{|\mathcal{S}|}$, the linear system $V = U + P_\pi V$ has the unique solution $V = (\mathbb{I} - P_\pi)^{-1} U$.
% \end{lemma}

% Extending the result to the Bellman optimality operator is more challenging and requires additional conditions introduced below, which jointly guarantee the automatic properness of the operator stated in Theorem~\ref{thm:auto-proper}. 
% Similar challenge arises for the optimal value function: in the undiscounted setting, a policy can in principle cycle indefinitely, which would make the optimal value function ill posed.
% We therefore impose two conditions that jointly ensure the undiscounted Bellman operators remain well posed and enable the automatic properness results below.

\begin{standingass}[Base conditions of PUMCM]
For any stage reward $u$, 
    \begin{enumerate}[nosep, label=\textup{(B\arabic*)}]\label{asm:proper}
        \item \label{asm:proper_reach} \textbf{Reachability:} There exists at least one proper and admissible policy. 
        \item \label{asm:proper_nonpos} \textbf{Strictly negative stage surplus:} The stage surplus $H_s(u(s,\cdot))<0,\;\forall s\in S$. 
    \end{enumerate}
\end{standingass}
Condition~\ref{asm:proper_reach} imposes the minimal condition that the destination is reachable with at least one policy and their respective policy-induced values are bounded, which ensures the expected perturbed stage reward is well-defined. 
Condition~\ref{asm:proper_nonpos}, on the other hand, states that, under any policy and at any state, the maximal expected perturbed utility obtained at each stage (excluding future value) is strictly negative.
Together, Standing Assumption~\ref{asm:proper} ensures well-posedness of the underlying MDP. Specifically, the former ensures an exit of the network exists, while the latter penalizes time spent in the network and thus avoids infinite looping. 

\begin{remark}[Strict negative stage surplus via translation equivalence]\label{rem:stage-negativity}
In many traffic assignment models, $u(s,a)$ is defined as the negative link costs and thus always strictly negative. Under the standard surplus function design, Condition~\ref{asm:proper_nonpos} naturally holds. 
When the condition is not satisfied, the translation equivalence introduced by Standing Assumption~\ref{asm:surplus}~\ref{asm:surplus_equiv}  provides a simple solution to enforce a strictly negative stage surplus while leaving the stage choice map unchanged. Specifically, it states that $H_s( u(s,\cdot) - \alpha_s\mathbf{1})=H_s(u(s,\cdot))-\alpha_s$ for any constant $\alpha_s\in\R$ at any state $s\in S$. Accordingly, we have $\nabla H_s(u(s,\cdot)-\alpha_s\mathbf{1}) = \nabla H_s(u(s,\cdot))$, i.e., the induced stage choice map remains unchanged under a constant shift in stage reward. Therefore, we can select $\alpha_s\ge H_s(u(s,\cdot))+\epsilon$ for some $\epsilon>0$ to guarantee the condition stated in Standing Assumption~\ref{asm:proper}\ref{asm:proper_nonpos}.
\end{remark}

With both standing assumptions, we establish the well-posedness of the Bellman operators associated with PUMCM.

\begin{proposition}[PUMCM well-posedness]\label{prop:well-posedness}
% Under Conditions~\ref{A1}--\ref{A3} and Assumptions~\ref{asm:proper}--\ref{asm:proper}:
Any PUMCM satisfying the standing assumptions has the following properties:
\begin{enumerate}[nosep, label=\textup{(\roman*)}]
    \item\label{prop:universal-strict} Every policy $\pi$ satisfies $U_\pi(s)<0$ at all non-terminal states $s \in \mathcal{S}$.
    \item\label{prop:auto-proper} An admissible policy $\pi$ is proper if and only if $V = T_\pi V$ has a finite and unique solution. 
    % When proper, the solution $V_\pi$ is unique.
    \item\label{prop:optimal-proper} If $V = T_* V$ has a finite 
    % \kz{(and unique?)} 
    solution $V_*$, then the optimal policy $\pi_* = \nabla H(Q(V_*))$ is proper.
\end{enumerate}
\end{proposition}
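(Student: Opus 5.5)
Part~\ref{prop:universal-strict} follows from weak duality in Lemma~\ref{lem:choice_map}\ref{FY:ineq}, applied with $Q=u(s,\cdot)$. For any $\pi$ this gives $U_\pi(s)=\pi(\cdot|s)\tran u(s,\cdot)-H_s^*(\pi(\cdot|s))\le H_s(u(s,\cdot))<0$, where the last inequality is Condition~\ref{asm:proper_nonpos}. For non-admissible policies the inequality holds trivially, since $U_\pi(s)=-\infty$. Set $\delta\coloneqq\min_s\bigl(-H_s(u(s,\cdot))\bigr)>0$; then $U_\pi\le-\delta\mathbf{1}$ uniformly over all policies. This uniform bound is the key quantitative fact used in the other two parts.

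For part~\ref{prop:auto-proper}, the direction ``proper $\Rightarrow$ unique finite solution'' is exactly the Neumann-series result quoted before Definition~\ref{def:proper}, namely Lemma~\ref{lem:neumann}. For the converse, suppose $\pi$ is admissible but improper, so $\rho(P_\pi)=1$. I will show that $V=T_\pi V$ then has no finite solution, which contradicts the hypothesis.
\begin{itemize}
\item Since $P_\pi\ge0$ is substochastic with spectral radius $1$, Perron--Frobenius gives a nonnegative left eigenvector $w\ge0$, $w\neq0$, with $w\tran P_\pi=w\tran$.
\item If a finite $V$ satisfies $V=U_\pi+P_\pi V$, multiplying on the left by $w\tran$ yields $w\tran U_\pi=0$.
\item This contradicts $w\tran U_\pi\le-\delta\,w\tran\mathbf{1}<0$.
\end{itemize}
Hence improper admissible policies admit no finite solution at all, and the equivalence follows. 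The only point needing care is justifying the nonnegative left Perron eigenvector for a reducible substochastic matrix; this is standard Perron--Frobenius for nonnegative matrices. An alternative is to restrict to a closed recurrent class of the chain that avoids $d$ and sum stationary weights.

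For part~\ref{prop:optimal-proper}, let $V_*=T_*V_*$ be finite and set $\pi_*(\cdot|s)=\nabla H_s(Q_s(V_*))$. Strong duality in Lemma~\ref{lem:choice_map}\ref{FY:max} gives $H_s^*(\pi_*(\cdot|s))=\pi_*\tran Q_s(V_*)-H_s(Q_s(V_*))<\infty$, so $\pi_*$ is admissible. The same identity gives $T_{\pi_*}V_*=T_*V_*=V_*$, so $V_*$ is a finite solution of the policy equation for $\pi_*$. The left-eigenvector argument from part~\ref{prop:auto-proper} uses only the existence of some finite solution, not uniqueness. It therefore shows that $\pi_*$ cannot be improper, so $\pi_*$ is proper.

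I expect the main obstacle to be the converse direction in part~\ref{prop:auto-proper}. There one must phrase the argument so that it needs only existence of a finite solution, making it reusable in part~\ref{prop:optimal-proper}. One must also handle the reducible case of $P_\pi$ cleanly via the Perron--Frobenius left eigenvector.
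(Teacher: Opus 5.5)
Your proof is correct and is essentially the paper's argument. Where the paper restricts the Bellman equation to a closed recurrent class $C$ and pairs it with the positive stationary distribution of $P_{\pi,C}$, you pair the full equation with a nonnegative left Perron vector of $P_\pi$, which gives the same contradiction between $w^\top U_\pi = 0$ and $U_\pi < 0$. Your explicit checks that $\pi_*$ is admissible, and that the converse in (ii) needs only existence (not uniqueness) of a finite solution, are worthwhile: the paper's proof of (iii) cites (ii) without spelling out either point.
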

\begin{proof}[Proof sketch]
We first show that a direct application of Lemma~\ref{lem:choice_map}\ref{FY:max} yields Property~\ref{prop:universal-strict}.
The sufficiency of Property~\ref{prop:auto-proper} is proved by invoking Lemma~\ref{lem:neumann}, while the necessity is proved by contradiction. 
% For the sufficiency direction of Item~\ref{prop:auto-proper}, properness gives the Neumann series $V_\pi = (\mathbb{I}-P_\pi)^{-1}U_\pi$, which is unique (Lemma~\ref{lem:neumann}).
% For necessity, we show by contradiction that if $\pi$ were improper, there exists a recurrent class such that the Bellman equation contradicts Item~\ref{prop:universal-strict}.
Property~\ref{prop:optimal-proper} naturally follows \ref{prop:auto-proper} by the equivalence between $T_*$ and $T_{\pi_*}$, where $\pi_*$ denotes the policy associated with $V_*$.
% Item~\ref{prop:optimal-proper} follows because $V_* = T_{\pi_*} V_*$ reduces to~\ref{prop:auto-proper}.
The complete proof is provided in Appendix~\ref{app:proof_well-posedness}.
\end{proof}

The convex conjugate embedded in PUMCM enables the well-posedness properties in Proposition~\ref{prop:well-posedness}: the Fenchel-Young inequality forces $U_\pi(s) < 0$ at every non-terminal state under the strict negative surplus condition (Standing Assumption~\ref{asm:proper}\ref{asm:proper_nonpos}), 
preventing an improper policy that leads to recurrent cycles with a finite Bellman equation. 
% preventing the reward cancellation along recurrent cycles that could otherwise sustain an improper policy with a finite Bellman equation.
The well-posedness of Bellman operators then ensures that all policies yield absorbing Markov chains with a well-defined fundamental matrix $(\mathbb{I}-P_\pi)^{-1}$ (Lemma~\ref{lem:neumann}), which in turn underpins the existence and uniqueness of a finite (optimal) value $V$ ($V_*$). It further induces the differentiability of the optimal value $V_*(u)$ with respect to the stage reward (link utility) $u$ (see Proposition~\ref{prop:demand}) and the global convergence of network loading (see Theorem~\ref{thm:mpi_global}).
% Well-posedness then guarantees that all policies generated by the Bellman operators induce absorbing Markov chains with a well-defined fundamental matrix $(\mathbb{I}-P_\pi)^{-1}$, which in turn underpins the existence and uniqueness of the optimal value $V_*$, the differentiability of the induced demand map, and the global convergence of network loading (Sections~\ref{sec:demand} and~\ref{sec:mpi}).
It is also worth noting that the policy properness derived in Proposition~\ref{prop:well-posedness} holds for any policies, including the boundary ones producing corner solutions with zero choice probabilities, thus bypassing the interior-solution requirement and the network topology restrictions imposed in previous studies~\citep[e.g.,][]{mai2022undiscounted,oyama2019prism}.
\subsection{Optimal values and link demand}\label{sec:demand}
With the well-posedness of PUMCM established in Proposition~\ref{prop:well-posedness}, we now characterize the optimal value $V_*(u)$ and link demand $x_*(u)$, i.e., link flow generated by the optimal policy $\pi_*(u)$, given the stage reward (link utility) $u$. 
For the simplicity of notation, we suppress the dependence on $u$ when clear from context.
The existence and uniqueness of $V_*$ rely on the following monotonicity properties of the Bellman operators, proved in Appendix~\ref{app:proof_Tstar_properties}.
% To this end, we first prove the existence and uniqueness of $V_*(u)$, which relies on the monotonicity of the Bellman operators $T_\pi$ and $T_*$ stated in the following lemma. The proof is provided in Appendix~\ref{app:proof_Tstar_properties}. Here, the condition on the reward $u$ is first dropped for the simplicity of notation. 
% action demand induced by PUMCM for a given reward vector.
% We first establish existence and uniqueness of the undiscounted optimal value $V_*$, which in turn pins down the optimal policy $\pi_*=\nabla H(Q(V_*))$.
% We then derive the induced action demand (link flow) $x^*(u)$ and its basic regularity properties.
% Properness is the key structural ingredient as it ensures that policy evaluation and the derivatives used below remain well defined even under corner solutions.

\begin{lemma}[Monotonicity and greedy improvement]\label{lem:Tstar_properties}
% Under Conditions~\ref{A1}-\ref{A3},
The Bellman operators of PUMCM satisfy the following properties:
\begin{enumerate}[nosep, label=\textup{(\roman*)}]
    \item If $V_1 \ge V_2$ element-wise, then $T_\pi V_1 \ge T_\pi V_2$ for any policy $\pi$.
    \item If $V_1 \ge V_2$ element-wise, then $T_* V_1 \ge T_* V_2$.
    \item $T_* V_\pi \ge V_\pi$ for any proper policy $\pi$ such that $V_\pi = T_\pi V_\pi$.
\end{enumerate}
\end{lemma}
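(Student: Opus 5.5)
The three properties follow from the explicit form of the operators in Eqs.~\eqref{eq:T_pi} and \eqref{eq:T_star}, together with nonnegativity of transition probabilities and of $\nabla H_s$.

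For (i), fix a policy $\pi$. If it is non-admissible, then $U_\pi(s)=-\infty$ at some state and the inequality holds trivially there, with the convention $-\infty\ge-\infty$. Otherwise, for every $s$,
\[
T_\pi V_1(s)-T_\pi V_2(s)=P_\pi(s,\cdot)(V_1-V_2).
\]
Each entry of $P_\pi$ in Eq.~\eqref{eq:policy_transition} is a sum of products of nonnegative numbers $\pi(a|s)$ and $P(s'|s,a)$. Hence $P_\pi\ge 0$, and $V_1-V_2\ge 0$ gives the claim.

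For (ii), I would use the representation $T_*V(s)=H_s(Q_s(V))$. Since $P(\cdot|s,\cdot)\ge 0$, we have $Q_s(V_1)-Q_s(V_2)=P(\cdot|s,\cdot)(V_1-V_2)\ge 0$ componentwise. It then suffices that $H_s$ is componentwise nondecreasing. This follows from the mean value theorem along the segment $Q_t=Q_s(V_2)+t\,(Q_s(V_1)-Q_s(V_2))$:
\[
H_s(Q_s(V_1))-H_s(Q_s(V_2))=\int_0^1 \nabla H_s(Q_t)\tran\bigl(Q_s(V_1)-Q_s(V_2)\bigr)\,dt\ \ge 0,
\]
because $\nabla H_s\in\Delta_s$ is nonnegative by \ref{asm:surplus_grad} and is continuous by \ref{asm:surplus_convex}. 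Alternatively, one can use the max-representation: $T_*V=\max_\pi T_\pi V$ pointwise, and (i) holds for each $\pi$, so the maximum preserves the order. Either route works; I would present the gradient one and mention the other.

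For (iii), let $\pi$ be proper with $V_\pi=T_\pi V_\pi$ finite. This forces $\pi$ to be admissible, since otherwise $U_\pi(s)=-\infty$ contradicts finiteness. By weak duality, Lemma~\ref{lem:choice_map}\ref{FY:ineq}, applied at $Q=Q_s(V_\pi)$ with $\pi(\cdot|s)\in\Delta_s$,
\[
T_*V_\pi(s)=H_s(Q_s(V_\pi))\ \ge\ \pi(\cdot|s)\tran Q_s(V_\pi)-H_s^*(\pi(\cdot|s))=U_\pi(s)+P_\pi(s,\cdot)V_\pi=T_\pi V_\pi(s)=V_\pi(s).
\]
The only care needed is the algebraic identity $\pi(\cdot|s)\tran Q_s(V)=\pi(\cdot|s)\tran u(s,\cdot)+P_\pi(s,\cdot)V$, which is immediate from Eqs.~\eqref{eq:Q_def} and \eqref{eq:policy_transition}.

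None of the steps is a real obstacle. The only point that needs attention is the handling of non-admissible policies and infinite values in (i) and (iii), which I would dispatch by the conventions noted above.
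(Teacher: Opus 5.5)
Your proposal is correct. Parts (i) and (iii) match the paper's proof: nonnegativity of $P_\pi$ for (i), and weak duality of Lemma~\ref{lem:choice_map}\ref{FY:ineq} at $Q_s(V_\pi)$ for (iii). For (ii), the paper uses exactly your ``max-representation'' alternative, written as the Fenchel--Young step at the greedy policy $\pi_2=\nabla H_s(Q_s(V_2))$, namely $H_s(Q_s(V_1))\ge H_s(Q_s(V_2))+\pi_2\tran P(\cdot|s,\cdot)(V_1-V_2)$; this relies on convexity through strong duality. Your primary gradient-integral argument instead needs only the $C^1$ part of \ref{asm:surplus_convex} together with \ref{asm:surplus_grad}, and both routes are valid.
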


These properties, together with well-posedness derived in Proposition~\ref{prop:well-posedness}, yield the existence and uniqueness of the optimal value stated in the following proposition with a detailed proof in Appendix~\ref{app:proof_Vstar}.

\begin{proposition}[Optimal value existence and uniqueness]\label{prop:Vstar}
The Bellman optimality operator of PUMCM $T_*$ admits a unique fixed point $V_* \in \R^{|\mathcal{S}|}$ satisfying $T_* V_* = V_*$.
Moreover, $V_*$ dominates every admissible proper policy value, i.e., $V_* \ge V_\pi$ for every admissible proper policy $\pi$, and the optimal policy $\pi_* = \nabla H(Q(V_*)):=(\nabla H_s(Q_s(V_*)))_{\forall s\in \mathcal{S}}$ is proper and unique.
\end{proposition}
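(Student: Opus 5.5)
The plan is to prove existence by a policy-iteration (monotone improvement) argument, and uniqueness by comparing any fixed point against proper policy values.

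\emph{Existence.} Start from a proper admissible policy $\pi_0$, which exists by Standing Assumption~\ref{asm:proper}\ref{asm:proper_reach}. By Proposition~\ref{prop:well-posedness}\ref{prop:auto-proper} its value $V_{\pi_0}=(\mathbb{I}-P_{\pi_0})^{-1}U_{\pi_0}$ is finite. Define greedy policies recursively by $\pi_{k+1}(\cdot|s)=\nabla H_s(Q_s(V_{\pi_k}))$. By Lemma~\ref{lem:choice_map}\ref{FY:max},
\[
T_{\pi_{k+1}}V_{\pi_k}=T_*V_{\pi_k}\ge V_{\pi_k},
\]
where the inequality is Lemma~\ref{lem:Tstar_properties}(iii). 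This also shows $H_s^*(\pi_{k+1}(\cdot|s))$ is finite, so $\pi_{k+1}$ is admissible.

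\emph{Main obstacle: properness of the greedy policy.} We must show $\pi_{k+1}$ is proper. Iterating monotonicity (Lemma~\ref{lem:Tstar_properties}(i)) gives $T_{\pi_{k+1}}^n V_{\pi_k}\ge V_{\pi_k}$ for all $n$, with
\[
T_{\pi_{k+1}}^n V=\sum_{j<n}P_{\pi_{k+1}}^jU_{\pi_{k+1}}+P_{\pi_{k+1}}^nV .
\]
Suppose $\pi_{k+1}$ were improper. Then some recurrent class $C$ of $P_{\pi_{k+1}}$ is closed, so $P_{\pi_{k+1}}$ is stochastic on $C$. Since $U_{\pi_{k+1}}<0$ there by Proposition~\ref{prop:well-posedness}\ref{prop:universal-strict}, the partial sums diverge to $-\infty$ on $C$. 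Meanwhile $P^n_{\pi_{k+1}}V_{\pi_k}$ stays bounded by $\|V_{\pi_k}\|_\infty$. This contradicts the lower bound $V_{\pi_k}$. Hence $\pi_{k+1}$ is proper. Passing to the limit $n\to\infty$ (Neumann series, Lemma~\ref{lem:neumann}) gives $V_{\pi_{k+1}}\ge T_*V_{\pi_k}\ge V_{\pi_k}$.

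\emph{Convergence of the sequence.} The values $V_{\pi_k}$ are nondecreasing. For an upper bound, $U_\pi(s)\le H_s(u(s,\cdot))<0$ by Fenchel--Young, so $V_\pi\le0$ for every proper $\pi$. A nondecreasing sequence bounded above converges, say $V_{\pi_k}\uparrow \bar V$. From $V_{\pi_{k+1}}\ge T_*V_{\pi_k}\ge V_{\pi_k}$ and continuity of $T_*$ (each $H_s$ is $C^1$), we get $T_*\bar V=\bar V$. So a finite fixed point exists.

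\emph{Dominance, uniqueness, and the optimal policy.} Let $V$ be any finite fixed point of $T_*$. By Proposition~\ref{prop:well-posedness}\ref{prop:optimal-proper}, its greedy policy $\pi_V=\nabla H(Q(V))$ is proper and $V=T_{\pi_V}V$, so $V=V_{\pi_V}$ by uniqueness in Proposition~\ref{prop:well-posedness}\ref{prop:auto-proper}.
\begin{itemize}
\item[] \emph{Dominance.} For any admissible proper $\pi$, weak duality gives $T_\pi V\le T_*V=V$. Iterating with monotonicity yields $T_\pi^nV\le V$, and letting $n\to\infty$ gives $V_\pi\le V$ since $P_\pi^n\to0$.
\item[] \emph{Uniqueness.} If $V_1,V_2$ are two fixed points, each equals a proper admissible policy value. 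Applying dominance in both directions gives $V_1\le V_2\le V_1$.
\end{itemize}
Uniqueness of $\pi_*=\nabla H(Q(V_*))$ then follows because it is a deterministic function of the unique $V_*$. It is also the unique maximizer at each state, since $\nabla H_s$ is single-valued, and it is proper by Proposition~\ref{prop:well-posedness}\ref{prop:optimal-proper}.
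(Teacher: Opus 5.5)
Your proof is correct, but your existence argument takes a different route from the paper's. The paper applies Brouwer's theorem. Since $T_*$ is continuous and monotone, with $T_*V_{\pi_0}\ge V_{\pi_0}$ and $T_*\mathbf{0}<\mathbf{0}$, it maps the order interval $\{V: V_{\pi_0}\le V\le\mathbf{0}\}$ into itself. You instead run exact policy iteration. This forces you to show that each greedy policy $\pi_{k+1}$ is proper, even though $V_{\pi_k}$ is only a subsolution of $T_{\pi_{k+1}}$ and not a fixed point. Proposition~\ref{prop:well-posedness}\ref{prop:optimal-proper} does not cover that case. Your divergence argument on a closed recurrent class handles it: it is the inequality version of the necessity proof of Proposition~\ref{prop:well-posedness}\ref{prop:auto-proper}, and it is sound.

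Your route buys constructiveness. It gives monotone convergence of policy iteration through genuine proper-policy values, essentially the $m=\infty$ case of Theorem~\ref{thm:mpi_global}. The paper's route is shorter and never needs properness of intermediate policies. In fact the invariant interval alone gives existence without Brouwer, since $T_*^nV_{\pi_0}$ is nondecreasing and bounded above by $\mathbf{0}$, so it converges to a fixed point by continuity.

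For dominance, your argument ($T_\pi V\le T_*V=V$, iterate, $P_\pi^n\to0$) is the same computation the paper uses in its uniqueness step, but only for the greedy policy $\check\pi$. You apply it to every admissible proper $\pi$ and deduce uniqueness as a corollary. That is cleaner than the paper's device of re-running the Brouwer construction on $[V_\pi,\mathbf{0}]$ for each proper $\pi$ and then appealing to uniqueness.
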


\begin{proof}[Proof sketch]
% \kz{
We prove the existence of $V_*$ using Brouwer's fixed point theorem applied to a bounded invariant set constructed via Lemma~\ref{lem:Tstar_properties}, and utilize the policy properness to prove its uniqueness.
Dominance of $V_*$ follows from the greedy improvement of $T_*$ (Lemma~\ref{lem:Tstar_properties}). The properness of $\pi_*$ is due to Proposition~\ref{prop:well-posedness}\ref{prop:optimal-proper} and its uniqueness is naturally implied from the uniqueness of $V_*$ and the property of the surplus function $H$. 
% The dominance of $V_*$ is directly induced from the proof, and finally, the properness of $\pi_*$ is directly implied from Corollary~\ref{cor:optimal-proper}. The detailed proof is provided in Appendix~\ref{app:proof_Vstar}. }
\end{proof}

With $V_*$ established, we derive the optimal link demand $x_*(u)$ induced by the optimal policy $\pi_*(u)$ and show that it can be expressed as the gradient of a potential $\phi(u)$. Specifically, $\phi(u)$ is constructed using the load $q \in \R_+^{|\mathcal{S}|}$, i.e., demand originated from each non-terminal state to the terminal state, and the optimal value $V_*(u)$. This result is formally stated in the following proposition. For notation simplicity, we use $C^1$ to denote continuous differentiability and $C^2$ for the second-order hereafter. 

\begin{proposition}[Optimal link demand]\label{prop:demand}
    The optimal value function $V_*(u)$ is component-wise convex and $C^1$ in stage reward $u$. 
    % Let $V_*(u)$ denote the optimal value obtained at reward $u$ under the same conditions of Proposition~\ref{prop:Vstar}. Then, the mapping $u \mapsto V_*(u)$ is convex and $C^1$. 
    For any non-negative load $q \in \R_+^{|\mathcal{S}|}$, the scalar potential $\phi(u)\coloneqq q\tran V_*(u)$ is convex and $C^1$. Specifically, its gradient yields the closed-form expression of optimal link demand 
    \begin{align}\label{eq:link_demand}
        x_*(u) = \nabla \phi(u) = [\nabla V_*(u)]\tran q = [(\mathbb{I} - P_{\pi_*})^{-1} \pi_*(u)]\tran q,
    \end{align}
    which is both continuous and monotone non-decreasing in $u$. 
    Moreover, when Assumption~\ref{asm:surplus_extra} holds, $V_*(u)$ is $C^2$ and $x_*(u)$ is $C^1$ with positive semi-definite Jacobian $\nabla x_*(u)=\nabla^2 \phi(u)$.
\end{proposition}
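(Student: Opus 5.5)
Convexity comes first, from a variational characterization of $V_*$. By Proposition~\ref{prop:Vstar}, $V_*(u)\ge V_\pi(u)$ for every admissible proper policy $\pi$, and $V_*(u)=V_{\pi_*}(u)$. Hence
\[
V_*(u)=\max_{\pi\ \text{admissible, proper}} (\mathbb{I}-P_\pi)^{-1}U_\pi(u),
\]
the maximum being taken componentwise. For fixed $\pi$, $U_\pi(s)=\pi(\cdot|s)\tran u(s,\cdot)-H_s^*(\pi(\cdot|s))$ is affine in $u$. Since $(\mathbb{I}-P_\pi)^{-1}$ does not depend on $u$, each $V_\pi(u)$ is affine in $u$.

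One subtlety is that the set of proper policies does not depend on $u$: properness is a property of $P_\pi$ alone. I do need Standing Assumption~\ref{asm:proper} to hold at every $u$ considered, or at least on the open domain where \ref{asm:proper_nonpos} holds, and I will restrict attention to that domain. Each component $V_*(u)(s)$ is then a pointwise supremum of affine functions, hence convex. Because $q\ge 0$, $\phi=q\tran V_*$ is convex as well.

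For differentiability I will use the implicit function theorem on $G(V,u)=V-T_*^uV$, where $T_*^uV(s)=H_s(u(s,\cdot)+P(\cdot|s,\cdot)V)$. Under \ref{asm:surplus_convex}, $G$ is only $C^1$. Its partial Jacobian in $V$ is $\mathbb{I}-P_{\pi_*}$, using $\nabla H_s=\pi_*(\cdot|s)$. This matrix is invertible by properness of $\pi_*$ (Proposition~\ref{prop:well-posedness}\ref{prop:optimal-proper} and Lemma~\ref{lem:neumann}). The $C^1$ implicit function theorem then gives a local $C^1$ solution $V(u)$. By uniqueness in Proposition~\ref{prop:Vstar}, this local solution coincides with $V_*(u)$, so $V_*$ is $C^1$.

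Differentiating the fixed-point equation gives
\[
\partial V_*/\partial u(s,a)=(\mathbb{I}-P_{\pi_*})^{-1}e_s\,\pi_*(a|s),
\]
where the second factor comes from $\nabla_u H_s=\pi_*(\cdot|s)$ on the block of state $s$. Stacking these columns yields $\nabla V_*(u)=(\mathbb{I}-P_{\pi_*})^{-1}\Pi_*(u)$, with $\Pi_*$ the block matrix of policy rows. This is the stated expression, and $x_*=\nabla\phi=[\nabla V_*]\tran q$.

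Continuity of $x_*$ is immediate from $\phi\in C^1$. Monotonicity of the gradient of a convex function, $(x_*(u)-x_*(u'))\tran(u-u')\ge 0$, gives the non-decreasing claim. Elementwise, $x_*\ge 0$ because $(\mathbb{I}-P_{\pi_*})^{-1}\ge 0$, $\pi_*\ge 0$ and $q\ge 0$; I will read "monotone non-decreasing" in the monotone-operator sense.

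Under Assumption~\ref{asm:surplus_extra}, $G$ is $C^2$, so the implicit function theorem gives $V_*\in C^2$, hence $\phi\in C^2$ and $x_*=\nabla\phi\in C^1$ with $\nabla x_*=\nabla^2\phi$. This Hessian is positive semidefinite because $\phi$ is convex.

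The main obstacle I expect is the convexity step. I must confirm that the supremum over proper admissible policies is attained by $\pi_*$ at every $u$, and that no improper policy with a finite value slips in; Proposition~\ref{prop:well-posedness}\ref{prop:auto-proper} rules this out. I also need the domain of $u$ on which the standing assumptions hold to be convex. If the domain is not convex, I will use the translation trick of Remark~\ref{rem:stage-negativity}, or simply state the result on the convex set $\{u: H_s(u(s,\cdot))<0\ \forall s\}$. That set is convex because each $H_s$ is convex, so this is acceptable.
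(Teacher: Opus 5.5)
Your proposal is correct and follows the paper's proof essentially step for step. Convexity comes from writing $V_*(u)$ as a supremum of the affine policy values $(\mathbb{I}-P_\pi)^{-1}U_\pi(u)$. Smoothness and the gradient formula come from the implicit function theorem applied to $V-H(Q(u,V))$, whose $V$-Jacobian $\mathbb{I}-P_{\pi_*}$ is invertible by properness, and under the $C^2$ assumption the positive semi-definite Hessian follows from convexity. Your two extra points are refinements the paper leaves implicit, not a different route: restricting to the open convex set $\{u: H_s(u(s,\cdot))<0\ \forall s\}$ where the standing assumptions actually hold, and identifying the local implicit-function branch with $V_*$ via uniqueness of the fixed point.
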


% With $V_*$ established, the induced action demand can be obtained by differentiating through the optimal value function $V_*$.
% This gives a convenient representation of demand as a smooth map in reward space, which later becomes a demand map in cost space once $u$ depends on $c$.

% \begin{corollary}[Monotone and continuous action demand]\label{cor:demand}
% Suppose the conditions of Proposition~\ref{prop:Vstar} hold.
% Then the mapping $u \mapsto V_*(u)$ is convex and continuously differentiable ($C^1$), and for any nonnegative load vector $q \in \R_+^{|\mathcal{S}|}$, the scalar potential $\phi(u)\coloneqq q\tran V_*(u)$ is convex and $C^1$ with gradient being the action demand:
% \begin{equation}\label{eq:link_demand}
%     x^*(u) \coloneqq \nabla \phi(u) = [\nabla V_*(u)]\tran q = [(\mathbb{I} - P_{\pi_*})^{-1} \pi_*]\tran q.
% \end{equation}
% Since $\phi$ is convex and $C^1$, its gradient $x^*$ is continuous and monotone:
% \begin{equation}\label{eq:reward_monotone}
%     (x^*(u) - x^*(u'))\tran (u - u') \ge 0, \qquad \forall\, u, u'.
% \end{equation}
% If additionally Assumption~\ref{ass:smooth} holds, then $V_*$ is twice continuously differentiable ($C^2$), $x^*$ is $C^1$, and the Jacobian $\nabla x^*(u)=\nabla^2 \phi(u)$ is positive semidefinite.
% \end{corollary}

\begin{proof}[Proof sketch]
Given the well-defined fundamental matrix $(\mathbb{I} - P_{\pi_*})^{-1}$ (Lemma~\ref{lem:neumann}), the optimal link flow can be computed using the state-action occupancy and load, i.e., $x_*(u)=[(\mathbb{I} - P_{\pi_*})^{-1}\pi_*(u)]\tran q$.
% Recall that the fundamental matrix $(\mathbb{I} - P_{\pi_*})^{-1}$ gives state occupancy at the optimal policy $\pi_*$.
% Accordingly, $(\mathbb{I} - P_{\pi_*})^{-1}\pi_*(u)$ yields the state-action occupancy and thus the link flow is given by $x_*(u)=[(\mathbb{I} - P_{\pi_*})^{-1}\pi_*(u)]\tran q$.
The key step of proof is thus to show that the state-action occupancy corresponds to the gradient of the optimal value function, i.e., $\nabla V_*(u) = (\mathbb{I} - P_{\pi_*})^{-1}\pi_*(u)$, along with its convex and $C^1$ properties. 
% , and accordingly, the link flow corresponds to the gradient of the proposed scalar potential $\phi(u)$. 
See Appendix~\ref{app:proof_demand} for the complete proof.
\end{proof}
% Differentiate the fixed-point equation $V_*(u)=T_* V_*(u)=H(Q(V_*(u)))$ via the implicit function theorem (IFT).
% The Jacobian with respect to $V$ is $\mathbb{I}-P_{\pi_*}$, which is invertible because $\pi_*$ is proper (Corollary~\ref{cor:optimal-proper}).
% IFT then gives $\nabla V_*(u)=(\mathbb{I}-P_{\pi_*})^{-1}\pi_*$, and hence~\eqref{eq:link_demand}.
% Convexity of $V_*$ (componentwise) follows from $V_*(u)=\sup_{\text{proper }\pi}V_\pi(u)$, a supremum of affine functions; this yields convexity of $\phi=q\tran V_*$ for $q\ge 0$.
% Continuity and monotonicity~\eqref{eq:reward_monotone} are standard properties of the gradient of a convex $C^1$ function.
% Under additional Assumption~\ref{ass:smooth}, the map $H$ is $C^2$, so the IFT gives $V_*$ is $C^2$ with $C^1$ gradient $x^*$ and the Hessian $\nabla^2 \phi\succeq 0$ exists.
% See Appendix~\ref{app:deferred-sec3} for details.

Proposition~\ref{prop:demand} establishes the demand side of PUME. Given the stage reward (link utility) $u$, PUMCM produces well-behaved link demand $x_*(u)$. In the next section, we will move to the supply side by defining the supply mapping, and finally match demand and supply via a variational inequality.

% Corollary~\ref{cor:demand} provides a simple interface for equilibrium analysis: given utilities $u$ satisfying conditions in Proposition~\ref{prop:Vstar}, PUMCM produces an action demand vector $x^*(u)$ that is well-behaved.
% In the next section we move to cost space, compose this demand map with a utility--cost relation $u(c)$, and couple the resulting demand with a (possibly asymmetric) supply mapping through a variational inequality. This is our equilibrium framework.

%%%%%%%%%%%%%%%%%%%%%%%%%%%%%%%%%%%%%%%%%%%%%%%%%%%%%%%%%%%%%%%%%%%%%%
%%           SECTION 3: PUME EQUILIBRIUM
%%%%%%%%%%%%%%%%%%%%%%%%%%%%%%%%%%%%%%%%%%%%%%%%%%%%%%%%%%%%%%%%%%%%%%
\section{Perturbed Utility Markovian Equilibrium}\label{sec:equilibrium}

This section couples the demand-side mapping produced by PUMCM  with a supply-side mapping and establishes the equilibrium condition defined in a more general cost space. We start by transforming the link demand derived in Section~\ref{sec:demand} from the link utility (stage reward) space to the cost space (Section~\ref{sec:cost_demand}), then define the supply mapping and variational inequality (VI) condition of PUME, followed by its existence and uniqueness (Section~\ref{sec:vi_formulation}), and finally construct the corresponding variational Nash equilibrium (VNE) condition in the link-flow space (Section~\ref{sec:vne}), also known as the primal formulation~\citep[e.g.,][]{baillon2008markovian}. 

% We now couple the PUMCM demand map from Section~\ref{sec:demand} with a supply-side mapping through a cost-space equilibrium condition.
% Throughout, the cost vector $c$ is the endogenous equilibrium variable, and PUMCM demand is evaluated at utilities induced by $c$.

\subsection{Aggregate link demand in cost space}\label{sec:cost_demand}

Let $\mathcal{L}$ be a finite set of cost components with $L\coloneqq |\mathcal{L}|$, and $\Omega \subseteq \R_+^L$ be a closed convex cost domain. Hence, a cost vector is expressed as $c=(c_\ell)_{\ell\in\mathcal{L}}$. 
In transportation applications, $\mathcal{L}$ can represent the set of physical network links, and $\Omega$ is thus the set of admissible link-cost vectors. More generally, $\mathcal{L}$ represents any shared resources whose costs are endogenously determined. In what follows, we consider $\mathcal{L}$ corresponds to the link set and thus $c$ shares the same dimension as aggregate link demand.

The PUMCM described in Section~\ref{sec:PUMCM} corresponds to a single user class sharing the destination (i.e., terminal state). In standard traffic assignment models, multiple destinations exist. 
More generally, different user classes may vary in state-action structure, transition kernel, and perturbation functions, while they may share the same cost when traveling on the same link. 
To capture the full heterogeneity, we introduce $\mathcal{K}$ as a finite set of user classes and let $\mathcal{S}_k$ and $\mathcal{A}_k = \cup_{s\in\mathcal{S}_k} \mathcal{A}_{k,s}$ denote the corresponding state and action space, respectively. 
The following assumption defines the cost-utility mapping based on the standard sign convention.
\begin{assumption}[Linear cost-utility mapping]\label{asm:utility}
    For each class $k$, 
    there exists a class-specific incidence matrix $B_k\in \{0,1\}^{|\mathcal{A}_k|\times L}$ such that $u_k(c) = -B_k c$, 
    where $B_{k,(s,a),\ell} = 1$ if utility $u_k(s,a)$ corresponds to the cost $c_\ell$ and 0 otherwise. Besides, the matrix $B_k$ ensures that $u_k(c)$ satisfies Standing Assumption~\ref{asm:proper}\ref{asm:proper_nonpos} for all $c\in \Omega$. 
\end{assumption}

Accordingly, the aggregate link demand $x\in \R_+^L$ at cost $c$ is:
\begin{align}
    x(c) := \sum_{k\in \mathcal{K}} B_k\tran x_{k*}(u_k(c)), 
\end{align}
where $x_{k*}$ is the class-$k$ optimal demand from Proposition~\ref{prop:demand}, evaluated at the cost-induced utility $u_k(c)$. The transpose $B_k^\top$ maps each class-specific demand to the shared link space, so the sum aggregates contributions from all user classes onto the common cost components.

The following proposition establishes the monotone property of the aggregate link flow with respect to link cost. 
\begin{proposition}[Monotone link demand in cost space]\label{prop:monotone_demand}
Under Assumption~\ref{asm:utility}, there exists a convex $C^1$ function $\Phi:\Omega\to\R$ such that
\begin{equation}\label{eq:potential}
    \nabla\Phi(c) = -x(c), \qquad \forall\, c \in \Omega.
\end{equation}
The aggregate link demand map $x:\Omega\to\R_+^L$ is continuous and monotone non-increasing: 
\begin{equation}\label{eq:cost_monotone}
    \langle x(c) - x(c'),\, c - c' \rangle \le 0, \qquad \forall\, c,c'\in\Omega.
\end{equation}
When Assumption~\ref{asm:surplus_extra} holds, $\Phi$ is $C^2$ and $x$ is $C^1$ with $\nabla x(c) = -\nabla^2\Phi(c) \preceq 0$ for all $c\in\Omega$.
% If additionally Assumption~\ref{ass:smooth} holds, then $\Phi$ is $C^2$, $x$ is $C^1$, and $\nabla_c x(c) = -\nabla^2\Phi(c) \preceq 0$ is symmetric negative semidefinite for all $c\in\Omega$.
\end{proposition}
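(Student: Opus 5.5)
The plan is to build $\Phi$ directly from the class-wise potentials in Proposition~\ref{prop:demand} and then use the chain rule. For each class $k$, fix its load $q_k\in\R_+^{|\mathcal{S}_k|}$ and let $\phi_k(u_k)\coloneqq q_k\tran V_{k*}(u_k)$. By Proposition~\ref{prop:demand}, $\phi_k$ is convex and $C^1$ on the set of rewards satisfying the standing assumptions, with $\nabla\phi_k = x_{k*}$. Assumption~\ref{asm:utility} guarantees that $u_k(c)=-B_kc$ lies in this set for every $c\in\Omega$, so the composition is well defined there. Define
\begin{equation*}
    \Phi(c)\coloneqq \sum_{k\in\mathcal{K}} \phi_k(-B_k c),\qquad c\in\Omega .
\end{equation*}

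Convexity of $\Phi$ follows because each summand is a convex function composed with a linear map, and a sum of convex functions is convex. The $C^1$ property and the gradient come from the chain rule:
\begin{equation*}
    \nabla\Phi(c) = \sum_k (-B_k)\tran \nabla\phi_k(-B_kc) = -\sum_k B_k\tran x_{k*}(u_k(c)) = -x(c),
\end{equation*}
which is \eqref{eq:potential}. Continuity of $x$ is immediate, since $-x=\nabla\Phi$ is the gradient of a $C^1$ function. Monotonicity \eqref{eq:cost_monotone} is the standard monotonicity of the gradient of a convex differentiable function on a convex set:
\begin{equation*}
    \langle \nabla\Phi(c)-\nabla\Phi(c'),c-c'\rangle\ge 0 .
\end{equation*}
This can be obtained by adding the two first-order convexity inequalities $\Phi(c')\ge\Phi(c)+\langle\nabla\Phi(c),c'-c\rangle$ and the same inequality with $c$ and $c'$ swapped. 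Substituting $\nabla\Phi=-x$ gives the claim.

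Under Assumption~\ref{asm:surplus_extra}, Proposition~\ref{prop:demand} gives that $V_{k*}$ is $C^2$, so each $\phi_k$ is $C^2$ with Hessian $\nabla^2\phi_k\succeq 0$. The chain rule then gives
\begin{equation*}
    \nabla^2\Phi(c)=\sum_k B_k\tran\nabla^2\phi_k(u_k(c))B_k\succeq 0 .
\end{equation*}
Hence $x=-\nabla\Phi$ is $C^1$ with $\nabla x(c)=-\nabla^2\Phi(c)\preceq 0$.

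The main subtlety is domain-related. Proposition~\ref{prop:demand} gives convexity of $V_*$ in $u$, but the standing assumption (B2) restricts the admissible rewards $u$. I therefore need the composed domain to be convex, so that convexity and the first-order inequalities make sense. This holds because $\Omega$ is convex and $c\mapsto -B_kc$ is linear, and Assumption~\ref{asm:utility} places the whole image inside the valid region. If differentiability is needed at boundary points of $\Omega$, I would note that $V_{k*}$ is defined and $C^1$ on an open neighborhood in $u$-space, since (B2) is a strict inequality and $H_s$ is continuous. The derivatives at the boundary are then ordinary derivatives rather than one-sided ones.
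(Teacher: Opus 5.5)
Your proof is correct and follows the paper's argument essentially step for step. The paper also sets $\Phi(c)=\sum_{k} q_k^\top V_{k,*}(u_k(c))$, obtains $\nabla\Phi=-x$ by the chain rule through the linear map $u_k(c)=-B_kc$, derives monotonicity of $x$ from monotonicity of the gradient of a convex function, and gets the $C^2$ claim from the same composition with Proposition~\ref{prop:demand}. Your domain remarks make explicit a point the paper leaves implicit: the image of $\Omega$ is a convex set inside the region where the strict surplus condition holds, and that region is open, so derivatives at boundary points of $\Omega$ are genuine two-sided ones.
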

\begin{proof}[Proof sketch]
    The potential function is constructed as $\Phi(c)=\sum_k \phi_k(u_k(c))$ based on the class-specific potential derived in Proposition~\ref{prop:demand}. Its properties and other results are derived accordingly. 
    % We prove the result by construct the potential function $\Phi$ using the class-specific potentials as per Proposition~\ref{prop:demand}, $\Phi(c)=\sum_k \phi_k(u_k(c))$, and derive the monotonicity accordingly. 
    See detailed proof in Appendix~\ref{app:proof_monotone_demand}.
\end{proof}

\begin{remark}[General cost-utility mapping]\label{rem:general_utility}
Proposition~\ref{prop:monotone_demand} is established on the linear cost-utility mapping stated in Assumption~\ref{asm:utility}, which covers standard routing problems. It is worth noting that the result holds as long as the class-specific link demand can be expressed as the (negative) gradient of a convex potential function in the cost space. 
The following are two common cases that satisfy this condition:
\begin{enumerate}[nosep,label=\textup{(\roman*)}]
    \item \textbf{General affine mapping:} $u_k(c) = M_k c + b_k$ for some class-specific matrix $M_k$ and offset $b_k$. Then the aggregate link flow is given by
    \begin{align}
        x(c) = -\sum_{k\in\mathcal{K}} (M_k)\tran x_{k,*}(u_k(c)),
    \end{align}
    and the results in Proposition~\ref{prop:monotone_demand} also hold. Assumption~\ref{asm:utility} is then the special case with $M_k=-B_k$ and $b_k=0$.

    \item \textbf{Component-wise convex mapping:}
    Each component of $u_k(c)$ is convex in $c$. Then the aggregate link flow is given by
    \begin{align}
        x(c) = -\sum_{k\in\mathcal{K}} [\nabla u_k(c)]\tran x_{k,*}(u_k(c)).
    \end{align}
    In this case, Proposition~\ref{prop:monotone_demand} still holds due to the standard composition rule of convex functions~\citep[Sec.~3.2.4]{boyd2004convex}. Particularly, it reduces to case (a) when $u_k(c)$ is affine.
\end{enumerate}
\end{remark}

\subsection{Dual formulation in cost space}\label{sec:vi_formulation}
As the final step to establish PUME, we define a supply-side mapping, also in the cost space, and couple it with the demand-side mapping $x(c)$ obtained in Section~\ref{sec:cost_demand}. 
% To close the model, we couple the aggregate demand $x(c)$ from Section~\ref{sec:cost_demand} with a supply-side mapping. 
Let $z : \Omega \to \R_+^L$ be a continuous supply function, with each element $z_\ell(c)$ denotes the supply of flow on link $\ell$ at cost $c$.
% for each cost vector $c$, $z_\ell(c)$ gives the flow that cost component $\ell$ can sustain.
In standard traffic assignment models, the link performance function $t_\ell$ is often assumed to be a strictly increasing function that maps from link flow $x_\ell$ to link cost $c_\ell$. Hence, the supply function can be defined as its inverse $z_\ell = t_\ell^{-1}$.
% the link performance function $t_\ell(x_\ell)$ maps from link flow to link cost, and the corresponding supply function is its inverse, i.e., $z_\ell = t_\ell^{-1}$.
% In the standard transportation setting, link performance functions $t_\ell(x_\ell)$ map flows to costs; the supply function is their inverse, $z_\ell(c_\ell) = t_\ell^{-1}(c_\ell)$.
% For instance, the classic BPR link performance functions yield separable, strictly increasing supply curves.

When $z$ admits a convex potential, the Markov traffic equilibrium can be cast as the solution to a convex program, also known as the dual formulation~\citep{baillon2008markovian,oyama2022markovian}. 
Differently, PUME relaxes such an assumption and allows $z$ to be non-separable (i.e., each link flow is jointly determined by all link costs). It neither requires the Jacobian $\nabla z(c)$ to be symmetric, thus accommodating asymmetric link interactions. 
% In what follows, we define PUME as a variational inequality (VI) on the excess supply operator $E(c) \coloneqq z(c) - x(c)$, which measures the gap between supply and demand at each cost vector.

% When $z$ happens to be a gradient of some convex potential function $Z$, i.e., $z = \nabla Z$, the equilibrium can be cast as a convex program of a (dual) Beckmann-type formulation \citep[see e.g.,][for the MTE case]{baillon2008markovian,oyama2022markovian}.

% Our formulation does not require such integrability.
% The supply function $z$ may be nonseparable (costs on one component affect flows on others) and its Jacobian need not be symmetric, ruling out a potential-function representation.
% This accommodates asymmetric congestion interactions, e.g., merge/diverge effects across links, that arise in practice but fall outside the optimization framework with potential functions.

% Coupling supply with the demand map of Section~\ref{sec:cost_demand}, we define the excess supply operator
% \begin{equation}\label{eq:excess_supply}
%     E(c) \coloneqq z(c) - x(c), \qquad c \in \Omega.
% \end{equation}

\begin{definition}[PUME in cost space]\label{def:PUME}
A cost vector $c^* \in \Omega$ is a \textit{perturbed utility Markovian equilibrium} (PUME) if it solves the variational inequality (VI)
\begin{equation}\label{eq:VI_PUME}
    \langle E(c^*),\, c - c^* \rangle \ge 0, \qquad \forall\, c \in \Omega,
\end{equation}
where $E(c) \coloneqq z(c) - x(c)$ is the excess supply. 
When $c^* \in \interior(\Omega)$, the equilibrium condition reduces to market clearance $z(c^*) = x(c^*)$.
\end{definition}

% Equivalently,~\eqref{eq:VI_PUME} reads $0 \in E(c^*) + N_\Omega(c^*)$, where $N_\Omega(c^*)$ is the normal cone of $\Omega$ at $c^*$.
% The VI formulation handles boundary constraints on costs through $\Omega$ and, crucially, requires only monotonicity of $E$, not integrability (which requires symmetric Jacobian of $E$), for well-posedness.
% Since $x$ is monotone (Theorem~\ref{prop:monotone_demand}), $E = z - x$ is monotone whenever $z$ is; the demand-side regularity from the PUMCM thus complements any monotone supply for regularity of the excess supply function. We discuss conditions on the supply function to allow existence and uniqueness of a PUME.

% Below we establish the existence and uniqueness of PUME. 
Since $\Omega$ is unbounded, the well-posedness of the VI~\eqref{eq:VI_PUME} relies on the monotone increasing property of the excess supply $E$. 
Proposition~\ref{prop:monotone_demand} already guarantees that $x$ is continuous and monotone non-increasing, while the condition on the supply side is outlined in the following assumption. 
% It remains to impose conditions on the supply $z$. 

% \subsection{Existence and uniqueness}\label{sec:existence}

% This subsection provides sufficient conditions for existence and uniqueness of a PUME.
% Existence relies on a growth condition on the supply map and the monotonicity of demand established in Theorem~\ref{prop:monotone_demand}.
% Uniqueness follows from a standard strict-monotonicity strengthening on the supply side.

\begin{assumption}[Coercive and monotone supply]\label{asm:supply}
The continuous supply function $z: \Omega \to \R_+^L$ satisfies the following properties:
\begin{enumerate}[nosep, label=\textup{(\roman*)}]
    \item \label{asm:supply_coercive}\textbf{Coercivity:} For some $\hat{c}\in\Omega$, 
    \begin{equation}\label{eq:coercive}
        \frac{\langle z(c),\, c - \hat{c} \rangle}{\|c - \hat{c}\|} \to +\infty
        \; \text{as } \|c\| \to \infty, c\in\Omega.
    \end{equation}

    \item \label{asm:supply_monotone}\textbf{Monotonicity:} $z$ is monotone non-decreasing in the sense that:
    \begin{equation}\label{eq:monotone_supply}
        \langle z(c) - z(c'),\; c-c' \rangle \geq 0, \; \forall c, c' \in \Omega.
    \end{equation}
\end{enumerate}
\end{assumption}

Assumption~\ref{asm:supply} holds for standard separable supply functions. 
% Standard separable supply functions satisfy both condtions~\eqref{eq:coercive} and~\eqref{eq:monotone_supply}. 
For instance, the inverse of the BPR family $t(x) = \alpha x^\beta + b$ is coercive when $\beta\ge 1$.
The following theorem shows that the coercivity of $z$ alone implies the coercivity of $E$ and thus guarantees existence.
% , even though the demand $x(c)$ could in principle offset the growth in supply.
% $z_\ell(c_\ell)=\kappa_\ell c_\ell^{\beta_\ell}$ is coercive when $\beta_\ell\ge 1$.
% Nevertheless, coercive $z$ stated in Assumption~\ref{asm:coercive} does not directly yields coercive $E$ as the demand $x$ can grow in an opposite direction. The existence of PUME is formally established in the following theorem.

% When $\Omega$ is compact, coercivity is not needed.
% For an unbounded feasible set, existence hinges on coercivity of the \emph{excess supply} $E(c)=z(c)-x(c)$.
% This is not automatic: even if $z$ is coercive, the demand map $x(c)$ is nonlinear and could, a priori, grow in directions that offset the supply term.
% Theorem~\ref{thm:existence} shows that the structural property inherited from PUMCM, monotonicity of $x$ in cost space (Theorem~\ref{prop:monotone_demand}), rules out such cancellation and yields coercivity of $E$ from coercivity of $z$ alone, which then ensures existence of a PUME.

\begin{theorem}[PUME existence]\label{thm:existence}
Under Assumptions~\ref{asm:utility} and~\ref{asm:supply}\ref{asm:supply_coercive}, PUME exists and corresponds to a solution $c^* \in \Omega$ to the VI problem~\eqref{eq:VI_PUME}. 
% \kz{Additionally, if there exists some $\bar{c} \in \interior(\Omega)$ such that $\langle E(c),\, \bar{c} - c \rangle < 0$ for all $c \in \partial\Omega$ (the boundary of set $\Omega$), then there must be an interior equilibrium $c^* \in \interior(\Omega)$ with $z(c^*) = x(c^*)$.}
% Assumptions~\ref{ass:extended},~\ref{asm:proper},~\ref{asm:proper},~\ref{asm:utility}, and~\ref{asm:coercive}, the VI~\eqref{eq:VI_PUME} admits a solution $c^* \in \Omega$.
% If additionally there exists $\bar{c} \in \interior(\Omega)$ such that 
% \begin{equation}\label{eq:additional_interior_cond}
%     \langle E(c),\, c - \bar{c} \rangle > 0
% \end{equation}
% for all $c \in \partial\Omega$, then $c^* \in \interior(\Omega)$ and $z(c^*) = x(c^*)$.
\end{theorem}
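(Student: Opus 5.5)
The plan is to use the standard existence result for variational inequalities on closed convex, possibly unbounded sets: if $E$ is continuous on $\Omega$ and there is a reference point $\hat c\in\Omega$ with
\begin{equation*}
\frac{\langle E(c),\, c-\hat c\rangle}{\|c-\hat c\|}\to+\infty \quad \text{as } \|c\|\to\infty,\ c\in\Omega,
\end{equation*}
then the VI~\eqref{eq:VI_PUME} has a solution (see \citet{facchinei2003finite}, Prop.~2.2.3). The argument therefore has two parts: continuity of $E$ and coercivity of $E$.

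\textbf{Continuity.} The supply $z$ is continuous by assumption. The demand $x$ is continuous by Proposition~\ref{prop:monotone_demand}, since it is the gradient of the convex $C^1$ function $-\Phi$. Hence $E=z-x$ is continuous on $\Omega$.

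\textbf{Coercivity (the main step).} We take the same $\hat c$ as in Assumption~\ref{asm:supply}\ref{asm:supply_coercive} and split
\begin{equation*}
\langle E(c), c-\hat c\rangle=\langle z(c), c-\hat c\rangle-\langle x(c), c-\hat c\rangle .
\end{equation*}
The obstacle is that the demand term could in principle grow and cancel the supply growth. Monotonicity of demand, \eqref{eq:cost_monotone}, removes this danger: applying it with $c'=\hat c$ gives
\begin{equation*}
\langle x(c), c-\hat c\rangle\le\langle x(\hat c), c-\hat c\rangle\le\|x(\hat c)\|\,\|c-\hat c\|.
\end{equation*}
Dividing by $\|c-\hat c\|$ yields
\begin{equation*}
\frac{\langle E(c),c-\hat c\rangle}{\|c-\hat c\|}\ \ge\ \frac{\langle z(c),c-\hat c\rangle}{\|c-\hat c\|}-\|x(\hat c)\|.
\end{equation*}
The subtracted term is a fixed finite constant. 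It is finite because $x(\hat c)$ is well defined: Assumption~\ref{asm:utility} keeps $u_k(\hat c)$ within Standing Assumption~\ref{asm:proper}, so Propositions~\ref{prop:Vstar} and~\ref{prop:demand} apply. The right-hand side therefore tends to $+\infty$ as $\|c\|\to\infty$, which gives coercivity of $E$.

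\textbf{Conclusion.} To keep the argument self-contained, we can also spell out the cited theorem. Choose $R>\|\hat c\|$ large enough that $\langle E(c),c-\hat c\rangle>0$ whenever $\|c\|\ge R$, and set $\Omega_R=\Omega\cap\{\|c\|\le R\}$. This set is compact, convex and nonempty because it contains $\hat c$. By Hartman--Stampacchia (a Brouwer fixed-point argument on the projection map $c\mapsto\Proj_{\Omega_R}(c-E(c))$), the VI restricted to $\Omega_R$ has a solution $c_R$. Coercivity then forces $\|c_R\|<R$: otherwise testing the restricted VI with $\hat c$ gives $\langle E(c_R),\hat c-c_R\rangle\ge0$, contradicting $\langle E(c_R),c_R-\hat c\rangle>0$. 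So $c_R$ lies in the relative interior of the ball constraint. For any $c\in\Omega$, the point $c_R+t(c-c_R)$ belongs to $\Omega_R$ for small $t>0$, so the restricted VI inequality extends to all of $\Omega$. Hence $c^*=c_R$ is a PUME. Supply monotonicity, Assumption~\ref{asm:supply}\ref{asm:supply_monotone}, is not needed for existence.
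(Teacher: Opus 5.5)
Your proposal is correct and follows the paper's proof essentially step for step. Both use continuity of $E$ and transfer coercivity from $z$ to $E$ by testing demand monotonicity at $\hat c$ and applying Cauchy--Schwarz, then invoke a standard existence theorem for coercive VIs, which you also unpack via the Hartman--Stampacchia truncation instead of only citing it. One cosmetic slip: $x=-\nabla\Phi$ is the gradient of the \emph{concave} function $-\Phi$, but continuity needs only $C^1$, so nothing changes.
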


\begin{proof}[Proof sketch]
    % The existence is proved by showing that $E$ is coercive, while the interior solution to VI is proved by contradiction. See Appendix~\ref{app:proof_exist} for the detailed proof. 
    The monotone non-increasing demand (Proposition~\ref{prop:monotone_demand}) prevents $x$ from offsetting the growth of $z$, so coercivity transfers from $z$ to $E$, which is the primary condition to ensure the existence of VI solution. See Appendix~\ref{app:proof_exist} for the detailed proof.
\end{proof}

Theorem~\ref{thm:existence} guarantees there exists at least one equilibrium cost vector, while the following theorem further establishes its uniqueness by strengthening the property of $z$, a condition that also underpins the convergence guarantees in Section~\ref{sec:algorithms}.

% The equilibrium uniqueness follows from the strict monotonicity of supply and is formally established below. This property is also used for the algorithmic convergence guarantees in Section~\ref{sec:algorithms}.

\begin{theorem}[PUME uniqueness]\label{thm:uniqueness}
Under the same conditions of Theorem~\ref{thm:existence} and additionally strictly monotone $z$ on $\Omega$, PUME $c^* \in \Omega$ is unique.
% , if $z$ is strictly monotone on $\Omega$:
% \begin{equation}\label{eq:strict_mono}
%     \langle z(c_1) - z(c_2),\, c_1 - c_2 \rangle > 0, \qquad \forall\, c_1 \neq c_2 \in \Omega,
% \end{equation}
% then the PUME $c^* \in \Omega$ is unique.
\end{theorem}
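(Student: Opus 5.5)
The plan is the standard uniqueness argument for monotone variational inequalities: show that $E=z-x$ is strictly monotone on $\Omega$, and then compare two hypothetical solutions. Existence is already given by Theorem~\ref{thm:existence}, so only uniqueness needs proof.

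\textbf{Step 1 (strict monotonicity of $E$).} For any $c\neq c'$ in $\Omega$ we have
\[
\langle E(c)-E(c'),\,c-c'\rangle=\langle z(c)-z(c'),\,c-c'\rangle-\langle x(c)-x(c'),\,c-c'\rangle .
\]
The first term is strictly positive by the strict monotonicity of $z$. The second term, including its minus sign, is nonnegative by the monotone non-increasing property~\eqref{eq:cost_monotone} of the aggregate demand in Proposition~\ref{prop:monotone_demand}. Hence the sum is strictly positive.

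\textbf{Step 2 (comparing two equilibria).} Suppose $c^*_1$ and $c^*_2$ both solve~\eqref{eq:VI_PUME}. Testing the VI at $c^*_1$ with $c=c^*_2$ gives $\langle E(c^*_1),c^*_2-c^*_1\rangle\ge0$. Testing the VI at $c^*_2$ with $c=c^*_1$ gives $\langle E(c^*_2),c^*_1-c^*_2\rangle\ge0$. Adding the two inequalities yields
\[
\langle E(c^*_1)-E(c^*_2),\,c^*_1-c^*_2\rangle\le0 .
\]
If $c^*_1\neq c^*_2$, this contradicts Step 1, so $c^*_1=c^*_2$.

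\textbf{Main obstacle.} There is no serious obstacle. The one point to check is that the strict monotonicity of $z$ is assumed on all of $\Omega$, including its boundary, since equilibria may lie on $\partial\Omega$. It is also worth noting that uniqueness refers to the cost vector $c^*$. The demand side is only weakly monotone, so it cannot supply strictness; all of the strictness must come from $z$.
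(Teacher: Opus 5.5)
Your proposal is correct and follows the paper's proof: the paper also derives strict monotonicity of $E$ from strictly monotone $z$ plus the monotone non-increasing demand in Proposition~\ref{prop:monotone_demand}. The only difference is that you write out the two-solution comparison argument explicitly, whereas the paper cites it as a standard VI result (Nagurney, Theorem~12).
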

\begin{proof}[Proof sketch]
The strictly monotone $z$, together with monotone non-increasing $x$ (Proposition~\ref{prop:monotone_demand}), yields strictly monotone $E$ and thus uniqueness. See Appendix~\ref{app:proof_unique} for the complete proof.
\end{proof}

In sum, Theorems~\ref{thm:existence} and \ref{thm:uniqueness} outline conditions under which a unique PUME defined in the cost space exists. Specifically, we do not impose additional assumptions on the demand side, but only require the supply function to be coercive for existence and strictly monotone for uniqueness. These two assumptions are also standard in classic traffic assignment models.

% Taken together, Theorems~\ref{thm:existence}--\ref{thm:uniqueness} separate the equilibrium analysis into a demand-side and a supply-side component.
% On the demand side, we impose no parametric structure beyond the PUMCM conditions that ensure the loading problem is well defined and yields a continuous monotone cost-space demand map (Section~\ref{sec:cost_demand}).
% In particular, the demand model may generate corner solutions in the Markovian choice model (zero probabilities on dominated actions), yet the induced demand map and the VI formulation remain well posed.
% Existence and uniqueness are therefore ``pushed'' to transparent requirements on the supply operator: coercivity for existence and strict monotonicity for uniqueness, which are natural in transportation supply modeling and align with the algorithmic guarantees developed in Section~\ref{sec:algorithms}.

% \subsection{Variational Nash equilibrium}\label{sec:vne}
\subsection{Primal formulation in flow space}\label{sec:vne}

% In Section~\ref{sec:vi_formulation}, we formulate PUME in the cost space, while in this section, we show that an equivalent equilibrium can be constructed in the flow space, which is more commonly used in classic traffic assignment models and known as the primal formulation~\citep{baillon2008markovian}. 

The cost-space PUME of Section~\ref{sec:vi_formulation} is natural for the equilibrium analysis and computation. However, classic traffic assignment models predominantly define equilibrium in the flow space, known as the primal formulation~\citep{baillon2008markovian}.
Hence, we proceed to construct the equivalent primal formulation of PUME by defining the feasible link flow set and reinterpreting the aggregate link demand as a perturbed best response associated with another perturbation function connected to the potential function $\Phi$ introduced before. Accordingly, the primal PUME is defined as the solution to another VI problem defined on the link flow space. 
% This subsection constructs an equivalent primal formulation by defining the feasible set in the flow space, reinterpreting the PUMCM demand as a perturbed best response over flows, and endogeneize costs to capture congestion induced by travelers' routing decisions.

% The PUME is formulated in cost space.
% This subsection shows that the same equilibrium admits a flow-space interpretation as a variational Nash equilibrium.
% To do so, we first identify the feasible aggregate flow set, then show that the PUMCM demand is a perturbed best response for a given cost vector, and finally close the model by making costs endogenous through congestion.

% We first define the feasible set of link demand flow of user class $k\in\mathcal{K}$ as follows:
For each user class $k \in \mathcal{K}$, the feasible link flow set is the polyhedron of nonnegative state-action flows satisfying flow conservation:
% For each segment $k \in \mathcal{K}$, we define the feasible action flow set $\mathcal{X}^k$ as the set of non-negative state-action flows that satisfy flow conservation in the absorbing MDP of segment $k$:
\begin{equation}\label{eq:occupancy}
    \mathcal{X}_k \;\coloneqq\; \Bigl\{{x}_k \in \R_+^{|\mathcal{A}_k|} \;:\; \sum_{a \in \mathcal{A}_{k,s}} {x}_k(s,a) - \sum_{s' \in \mathcal{S}_k}\sum_{a' \in \mathcal{A}_{k,s'}} P_k(s\mid s',a')\,{x}_k(s',a') = q_{k,s},\;\; \forall\, s \in \mathcal{S}_k \Bigr\}.
\end{equation}
The equality condition in \eqref{eq:occupancy} enforces the flow conservation at each non-terminal state $s$, that is, the total outflow $\sum_a {x}_k(s,a)$ equals the total inflow transitioning from other states $\sum_{s',a'} P_k(s\mid s',a'){x}_k(s',a')$ plus the exogenous load $q_{k,s}$.
Each $\mathcal{X}_k$ is a polyhedron, hence closed and convex, and nonempty because any optimal policy induces a feasible flow (Proposition~\ref{prop:demand}).
% It is easy to verify that $\mathcal{X}_k$ is a polyhedron and thus closed and convex. It is also non-empty due to the existence of optimal link demand (Corollary~\ref{cor:demand}).

% The conservation condition requires that, at each nonterminal state $s$, the total outflow $\sum_a {x}^k(s,a)$ equals the inflow from transitions plus the exogenous load $q_s^k$.
% Each $\mathcal{X}^k$ is a polyhedron, hence closed and convex; it is non-empty because any proper policy induces a feasible action flow (Corollary~\ref{cor:demand}).

With the class-specific incidence matrix $B_k$ defined in Assumption~\ref{asm:utility}, the feasible aggregate link flow set is
\begin{equation}\label{eq:feasible_set}
    \mathcal{X} \;\coloneqq\; \Bigl\{x \in \R_+^L \;:\; x = \sum_{k \in \mathcal{K}} B_k\tran\,{x}_k,\quad {x}_k \in \mathcal{X}_k\;\;\forall\, k \in \mathcal{K} \Bigr\},
\end{equation}
Since the class-specific link demand $x_{k*}(u_k(c))$ is induced by a proper optimal policy $\pi_{k*}$ for any $c \in \Omega$, we have $x_{k*}(u_k(c)) \in \mathcal{X}_k$ and thus $x(c) \in \mathcal{X}$. 

Next, we derive the dual interpretation of $x(c)$ as the solution to a perturbed best response problem.
% The cost-space demand map $x(c)$ admits a dual interpretation as the solution to a flow-space perturbed best response problem, which underpins the primal VI formulation.
By Proposition~\ref{prop:monotone_demand}, there exists a convex potential $\Phi:\Omega\to\R$ with $\nabla\Phi(c) = -x(c)$. Accordingly, we can define the flow-space perturbation function as
\begin{align}
    R(x) \;\coloneqq\; \Phi^*(-x) \;=\; \sup_{c\in\Omega} \left\{-x\tran c - \Phi(c) \right\},
\end{align}
where $\Phi^*$ denotes the convex conjugate of $\Phi$.
It is easily shown that $R$ is convex in $x$ given that it is a pointwise supremum of affine functions.
% The function $R$ is convex as a pointwise supremum of affine functions. 
% This flow-space perturbation function therefore captures the aggregate perturbation induced by the PUMCM, expressed over flows rather than costs.
The following lemma proves that the aggregate link demand $x(c)$ derived in Section~\ref{sec:cost_demand} is the perturbed best response defined on $R$ with detailed proof in Appendix~\ref{app:proof_PURC}.

\begin{lemma}[Aggregate link demand as perturbed best response]\label{lem:PURC}
Under the conditions of Proposition~\ref{prop:monotone_demand}, for any $c \in \Omega$, the aggregate link demand  $x(c)$ obtained from Eq.~\eqref{eq:potential} also solves the perturbed best response problem:
\begin{equation}\label{eq:PURC}
    x(c) \;\in\; \arg\min_{x \in \mathcal{X}}\; c\tran x + R(x).
\end{equation}
This is equivalent to the stationarity condition $0 \in c + \partial R(x(c))$ with subgradient $\partial R(x)$ at $x$.
\end{lemma}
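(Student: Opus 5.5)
The plan is to combine Fenchel--Young duality for $\Phi$ with a separate feasibility argument. Feasibility is already in hand: the paragraph before the lemma gives $x_{k*}(u_k(c))\in\mathcal{X}_k$, since the optimal policy is proper and its occupancy flows satisfy conservation. Hence $x(c)\in\mathcal{X}$. What remains is to show that $x(c)$ minimizes $c\tran x + R(x)$ over $\mathcal{X}$, and in fact over all of $\R^L$.

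\textbf{Step 1 (lower bound).} From the definition $R(x)=\sup_{c'\in\Omega}\{-x\tran c'-\Phi(c')\}$, taking $c'=c$ gives
\[
R(x)\ge -x\tran c-\Phi(c)
\]
for every $x$. Equivalently,
\[
c\tran x+R(x)\ge -\Phi(c)
\]
for all $x\in\R^L$, and in particular for all $x\in\mathcal{X}$.

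\textbf{Step 2 (attainment at $x(c)$).} I will show $R(x(c))=-x(c)\tran c-\Phi(c)$, i.e., that the supremum defining $R(x(c))$ is attained at $c'=c$. The objective $c'\mapsto -x(c)\tran c'-\Phi(c')$ is concave on the convex set $\Omega$, because $\Phi$ is convex by Proposition~\ref{prop:monotone_demand}. Its gradient at $c'=c$ is
\[
-x(c)-\nabla\Phi(c)=-x(c)+x(c)=0,
\]
using \eqref{eq:potential}. A concave $C^1$ function whose gradient vanishes at a feasible point is maximized there, by the first-order concavity inequality $f(c')\le f(c)+\nabla f(c)\tran(c'-c)$.

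\textbf{Main obstacle.} This step needs care when $c$ lies on the boundary of $\Omega$. There, $\nabla\Phi(c)$ must be read as the gradient of a $C^1$ extension of $\Phi$, or as the one-sided derivative along feasible directions. I would handle this by noting that $\Phi(c)=\sum_k q_k\tran V_{k*}(-B_kc)$ is in fact defined, convex and $C^1$ on an open neighborhood of $\Omega$, wherever the standing assumptions still hold. The concavity inequality then applies verbatim. If that neighborhood is not guaranteed, the fallback is to use only feasible directions $c'-c$ with $c'\in\Omega$, which is all the inequality requires.

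\textbf{Conclusion.} Combining Steps 1 and 2 gives $c\tran x(c)+R(x(c))=-\Phi(c)\le c\tran x+R(x)$ for every $x\in\mathcal{X}$. Since $x(c)\in\mathcal{X}$, this proves \eqref{eq:PURC}. The same equality $R(x(c))+\Phi(c)=-x(c)\tran c$ is the Fenchel--Young equality case, which yields $-c\in\partial R(x(c))$, i.e., $0\in c+\partial R(x(c))$.

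For completeness, I would also check that the unconstrained and constrained statements agree. Step 1 holds on all of $\R^L$, so $x(c)$ is a global unconstrained minimizer that happens to lie in $\mathcal{X}$. The stationarity condition therefore needs no normal-cone term.
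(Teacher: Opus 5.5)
Your proposal is correct and follows essentially the same route as the paper. The paper also uses the first-order convexity inequality for $\Phi$ at $c$ to show that the supremum defining $R(x(c))$ is attained at $c'=c$. It then takes $c'=c$ in the definition of $R$ to obtain the subgradient inequality $R(x)-R(x(c))\ge\langle -c,\,x-x(c)\rangle$, which is your Steps~1--2 rearranged. Your extra care about boundary points of $\Omega$ addresses a point the paper passes over silently.
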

% \begin{proof}[Sketch of proof]
%     This result is proved by showing $x(c)$ satisfies the optimality condition of the perturbed best response problem \eqref{eq:PURC}. See Appendix~\ref{app:proof_PURC} for the detailed proof. 
% \end{proof}

% The next step is to express the cost-space demand map as a flow-space best response.
% By Theorem~\ref{prop:monotone_demand}, there exists a convex potential $\Phi:\Omega\to\R$ such that
% $\nabla\Phi(c) = -x(c)$.
% We define the corresponding perturbation in flow space by
% \begin{equation}\label{eq:Fenchel_R}
%     R(x) \;\coloneqq\; \sup_{c' \in \Omega}\bigl\{-{c'}\tran x - \Phi(c')\bigr\},
% \end{equation}
% equivalently $R(x) = \Phi^*(-x)$ where $\Phi^*$ denotes the convex conjugate \citep[][Ch. 12]{Rockafellar+1970} of~$\Phi$.
% The function $R$ is convex as a pointwise supremum of affine functions.
% It is the aggregate perturbation induced by the PUMCM in flow space to induce stochastic route choices.

% \begin{lemma}[Demand as perturbed best response]\label{lem:PURC}
% Under the conditions of Theorem~\ref{prop:monotone_demand}, for each $c \in \Omega$ the aggregate demand $x(c)$ solves
% \begin{equation}\label{eq:PURC}
%     x(c) \;\in\; \arg\min_{x \in \mathcal{X}}\; c\tran x + R(x).
% \end{equation}
% Equivalently, $-c \in \partial R(x(c))$, i.e., $0 \in c + \partial R(x(c))$.
% \end{lemma}

Similar to classic traffic assignment models, the primal formulation of PUME requires a link performance function that maps from link flow to link cost. When the following assumption holds, it is naturally derived as the inverse of the supply function. The primal PUME is then formulated on the inverse supply $z^{-1}$ and the newly defined perturbation $R$.

% Lemma~\ref{lem:PURC} treats costs as exogenous: given $c$, the aggregate link demand $x(c)$ solves a perturbed best response problem.
% To derive the equilibrium, costs must arise endogenously from congestion, which requires defining the congestion (link performance) as a function of aggregate link demand $x$. We introduce the following invertibility assumptions which will allow us to define the congestion function as the inverse of supply function.

% Lemma~\ref{lem:PURC} is still a best-response statement with exogenous costs: for a given cost vector $c$, the aggregate flow solves a perturbed cost-minimization problem over $\mathcal{X}$.
% To obtain equilibrium, costs must be generated endogenously by the aggregate flow itself.
% This requires passing from the supply map $z$, which maps costs to supplied flows, to its inverse cost map $t$, which maps feasible aggregate flows back to congestion costs.
% The following assumption ensures that this inverse is well defined on $\mathcal{X}$.

\begin{assumption}[Invertible supply]\label{asm:supply_inverse}
The supply function $z: \Omega \to \R_+^L$ is invertible with continuous inverse $z^{-1}: z(\Omega) \to \Omega$ and $\mathcal{X} \subseteq z(\Omega)$.
% \begin{enumerate}[nosep,label=\textup{(\alph*)}]
%     \item \textit{Invertibility: $z$ is invertible and $\mathcal{X} \subseteq z(\Omega)$;}
%     \item \textit{Continuity:} $z$ and its inverse $z^{-1}: z(\Omega) \to \Omega$ are continuous;
%     \item \textit{Monotonicity:} $z$ is monotone, i.e.,  $\langle z(c) - z(\tilde{c}),\, c - \tilde{c} \rangle \ge 0$ for all $c, \tilde{c} \in \Omega$.
% \end{enumerate}
\end{assumption}

% Define the induced congestion cost function $t \coloneqq z^{-1}: z(\Omega) \to \Omega$.
% By Assumption~\ref{asm:supply_inverse}, $t$ is well defined on $\mathcal{X}$ and is monotone.
% Substituting the endogenous cost $c=t(x)$ into the best-response condition motivates the following definition.

% Assumption~\ref{asm:supply_inverse} ensures that $z^{-1}$ is well defined and continuous on $\mathcal{X}$.
% Substituting the endogenous cost $c = z^{-1}(x)$ into the perturbed best-response condition of Lemma~\ref{lem:PURC} yields the flow-space equilibrium formulation.

% With the inverse supply function $z^{-1}$ well defined on $\mathcal{X}$ in Assumption~\ref{asm:supply_inverse}, we are now ready to define the primal VI formulation.
\begin{definition}[PUME in flow space]\label{def:VNE}
Suppose the supply function satisfies Assumption~\ref{asm:supply_inverse}. A flow vector $x^* \in \mathcal{X}$ is a PUME if there exists $\rho^* \in \partial R(x^*)$ such that
\begin{equation}\label{eq:VNE}
    \langle z^{-1}(x^*) + \rho^*,\, x - x^* \rangle \ge 0, \qquad \forall\, x \in \mathcal{X}.
\end{equation}
\end{definition}
% The primal VI formulation of PUME is closely related to the VI formulation of static traffic assignment, where $z^{-1}(x^*)$ denotes the link cost and $\rho^*\in \partial R(x^*)$ expresses the marginal perturbation. 

The primal VI problem~\eqref{eq:VNE} parallels the classic SUE, where the link demand solves the best response problem defined on the link cost (e.g., network loading). Differently, the equilibrium flow $x^*$ is the perturbed best response with cost defined by the inverse supply $z^{-1}(x^*)$ subject to perturbation $R(x^*)$. In other words, $\rho^* \in \partial R(x^*)$ can be interpreted as the marginal perturbation arising from stochastic route choices.
% Under our setting, $z^{-1}(x^*)$ is the link cost at flow $x^*$, and $\rho^* \in \partial R(x^*)$ can be intepreted as the marginal perturbation arising from stochastic route choices.
The following theorem establishes that the cost-space and flow-space formulations are equivalent when the equilibrium is interior.

\begin{theorem}[Interior PUME equivalence]\label{thm:VNE}
Under the same conditions of Theorem~\ref{thm:existence}
and Assumption~\ref{asm:supply_inverse},
% Under Conditions~\ref{A1}-\ref{A3} and Assumptions~\ref{asm:proper}-\ref{asm:supply_inverse}, 
the aggregate link demand $x^* \coloneqq x(c^*)$ induced by any interior PUME cost $c^* \in \mathrm{int}(\Omega)$ is a PUME flow.
\end{theorem}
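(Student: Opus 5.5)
The plan is to exhibit the pair $(x^*,\rho^*)$ with $x^*\coloneqq x(c^*)$ and $\rho^*\coloneqq -c^*$, and to verify Definition~\ref{def:VNE} in three steps. First, we show $x^*\in\mathcal{X}$. Second, we identify $z^{-1}(x^*)=c^*$. Third, we show $-c^*\in\partial R(x^*)$. The variational inequality~\eqref{eq:VNE} then holds trivially, because $\langle z^{-1}(x^*)+\rho^*, x-x^*\rangle=\langle c^*-c^*,x-x^*\rangle=0$ for every $x\in\mathcal{X}$.

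\textbf{Feasibility and market clearance.} Feasibility comes first. Each class demand $x_{k*}(u_k(c^*))$ is generated by a proper optimal policy, by Proposition~\ref{prop:Vstar}. Hence it lies in $\mathcal{X}_k$, as noted after~\eqref{eq:feasible_set}, and therefore $x^*\in\mathcal{X}$.

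Next, since $c^*\in\interior(\Omega)$, the VI~\eqref{eq:VI_PUME} forces $E(c^*)=0$. To see this, take $c=c^*\pm\epsilon e$ for an arbitrary direction $e$ and small $\epsilon>0$. Both signs give inequalities that together yield $\langle E(c^*),e\rangle=0$ for all $e$. This is the market clearance $z(c^*)=x(c^*)=x^*$ already noted in Definition~\ref{def:PUME}.

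By Assumption~\ref{asm:supply_inverse}, $z$ is invertible and $x^*\in\mathcal{X}\subseteq z(\Omega)$. It follows that $z^{-1}(x^*)=c^*$.

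\textbf{Subgradient condition.} It remains to show $\rho^*=-c^*\in\partial R(x^*)$. This is exactly the stationarity statement of Lemma~\ref{lem:PURC} at $c=c^*$, namely $0\in c^*+\partial R(x(c^*))$.

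If we prefer to argue directly rather than quote the lemma, we would use Fenchel--Young for $R(x)=\Phi^*(-x)$ with $\Phi$ restricted to $\Omega$. Since $\nabla\Phi(c^*)=-x^*$ and $\Phi$ is convex, the point $c^*$ maximizes $-x^{*\top}c-\Phi(c)$ over $\Omega$. The first-order condition holds with equality because $c^*$ is interior, although convexity alone suffices for the maximization. This gives $R(x^*)=-x^{*\top}c^*-\Phi(c^*)$. For any $x$, the definition of $R$ as a supremum gives $R(x)\ge -x^\top c^*-\Phi(c^*)=R(x^*)+\langle -c^*,x-x^*\rangle$, which is precisely $-c^*\in\partial R(x^*)$.

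\textbf{Expected obstacle.} The only delicate point is this subgradient step. We must make sure $R$ is proper and finite at $x^*$, and that the supremum defining $R(x^*)$ is attained at $c^*$ even though $\Omega$ is a constrained domain. Interiority of $c^*$, or simply the gradient inequality of the convex $C^1$ function $\Phi$ on the convex set $\Omega$, handles this. Interiority is used essentially only in the market-clearance step, to turn the VI into the equation $z(c^*)=x^*$ so that $z^{-1}(x^*)$ coincides with $c^*$.
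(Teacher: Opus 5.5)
Your proposal is correct and takes essentially the same route as the paper. Interior market clearance gives $z^{-1}(x^*)=c^*$, Lemma~\ref{lem:PURC} supplies $\rho^*=-c^*\in\partial R(x^*)$, and together with $x^*\in\mathcal{X}$ this makes the left-hand side of~\eqref{eq:VNE} vanish; your explicit derivation of $E(c^*)=0$ and your direct Fenchel--Young check only spell out steps the paper takes from Definition~\ref{def:PUME} and the proof of Lemma~\ref{lem:PURC}.
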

\begin{proof}[Proof sketch]
    % The equivalence is established by the interior solutions to both VI problems. 
    The interior PUME satisfies market clearance $z(c^*) = x(c^*)$ and thus $c^* = z^{-1}(x^*)$. Plugging this into Lemma~\ref{lem:PURC} yields $0 \in z^{-1}(x^*) + \partial R(x^*)$, the optimality condition of~\eqref{eq:VNE}.
    The detailed proof is included in Appendix~\ref{app:proof_VNE}. 
\end{proof}

\begin{remark}[General equivalence]\label{rem:general_equiv}
The equivalence established in Theorem~\ref{thm:VNE} uses the market clearance at an interior equilibrium cost. 
% When $c^* \in \mathrm{int}(\Omega)$, the cost-space VI reduces to $z(c^*) = x(c^*)$, so the two PUME are connected by $c^* = z^{-1}(x^*)$. 
When the equilibrium cost $c^*$ lies on the boundary of $\Omega$, market clearance need not hold exactly. The ordinary inverse supply $z^{-1}$ in~\eqref{eq:VNE} should then be replaced by a constrained inverse supply defined on the feasible cost set. 

% For a given $x\in\mathcal X$, this constrained inverse returns a feasible cost vector $\hat c\in\Omega$ satisfying
% \begin{equation}\label{eq:constrained_inverse}
%     \langle z(\hat{c}) - x,\, c' - \hat{c} \rangle \ge 0, \qquad \forall\, c' \in \Omega.
% \end{equation}
% Thus, $\hat c$ is the feasible cost vector whose supply clears the flow $x$ in the variational sense.

% Definition~\ref{def:VNE} recovers the equilibrium cost from the flow via~$z^{-1}(x^*)$.
% This coincides with~$c^*$ when market clearance holds (Theorem~\ref{thm:VNE}), but at the boundary of~$\Omega$ market clearance may fail: $z(c^*) \neq x^*$, so $z^{-1}(x^*) \neq c^*$.
% To extend Definition~\ref{def:VNE} to the boundary, we relax the equation $z(c) = x^*$ to a VI over~$\Omega$, defining the constrained inverse $\hat{c}(x^*)$ as the solution to
% \begin{equation}\label{eq:constrained_inverse}
%     \langle z(\hat{c}) - x^*,\, c' - \hat{c} \rangle \ge 0, \qquad \forall\, c' \in \Omega.
% \end{equation}
% Replacing~$z^{-1}(x^*)$ by~$\hat{c}(x^*)$ in~\eqref{eq:VNE} gives a flow-space PUME that is equivalent to the cost-space PUME (Definition~\ref{def:PUME}) for any $c^* \in \Omega$.
% When $\hat{c}(x^*) \in \mathrm{int}(\Omega)$, the VI reduces to $z(\hat{c}) = x^*$, recovering $\hat{c}(x^*) = z^{-1}(x^*)$ and Definition~\ref{def:VNE}.
\end{remark}

\section{Solution Algorithms}\label{sec:algorithms}

The solution procedure of PUME consists of an inner loop that solves PUMCM at the current cost $c$ and the induced demand $x(c)$, and an outer loop that updates the cost $c$. In this section, we will first present the modified policy iteration (MPI) algorithm for PUMCM (Section~\ref{sec:mpi}), then outline the monotone VI solvers for the outer loop (Section~\ref{sec:base_solvers}). The global convergence is proved for both the inner- and outer-loop solution methods. 
To further accelerate the solution procedure, 
we introduce a meta-algorithm and prove that the global convergence still holds if a merit function is properly constructed (Section~\ref{sec:meta}). 

% Computing a PUME is a nested problem.
% For a fixed cost vector $c$, one must first solve the PUMCM network loading problem and recover the aggregate demand $x(c)$.
% That demand map is then inserted into the cost-space equilibrium condition to update $c$.
% The theory of Sections~\ref{sec:PUMCM}--\ref{sec:equilibrium} determines the resulting algorithmic structure.
% Automatic properness makes the inner Bellman problem well posed over the full PUMCM class, including models with corner solutions, while the equilibrium analysis reduces the outer layer to a monotone VI in cost space.

% The section proceeds in three steps: the PUMCM network loading solver, outer VI solvers, and a safeguarded acceleration framework with global convergence guarantees.

\subsection{Inner loop: Modified policy iteration (MPI)}\label{sec:mpi}

Although the Bellman optimality operator $T_*$ of PUMCM is monotone (Lemma~\ref{lem:Tstar_properties}), it is not contractive in general. Therefore, the standard convergence result for value iteration~\citep[see, e.g.,][Ch.~3]{bertsekas2012dynamic} does not apply. 
We instead implement the modified policy iteration (MPI) proposed in \cite{puterman1978modified}, which alternates between $m$ steps of partial policy evaluation and a greedy policy update. The pseudo-code of MPI is provided in Algorithm~\ref{alg:mpi}, and its global convergence is established below.

% For undiscounted PUMCM, the Bellman operator $T_*$ is monotone but not contractive in general.
% Accordingly, the standard discounted fixed-point argument for value iteration~\citep[see, e.g.,][Ch.~3]{bertsekas2012dynamic} does not apply.
% We therefore study modified policy iteration (MPI)~\citep{puterman1978modified} in the context of PUMCM. Depending on the evaluation depth $m$, MPI interpolates between one-step value iteration ($m=1$) and exact policy iteration ($m \rightarrow \infty$).
% Specifically, MPI alternates greedy policy improvement with a finite number of policy-evaluation sweeps.
% Given $V_n$ at iteration $n$, MPI computes the greedy policy and then applies $m$ evaluation sweeps:
% \begin{equation}\label{eq:mpi_update}
%     \pi_n \coloneqq \nabla H(Q(V_n)), \qquad V_{n+1} \coloneqq T_{\pi_n}^m V_n.
% \end{equation}
% Equivalently, $V_{n+1}$ is obtained by initializing $W^{(0)} = V_n$ and iterating $W^{(h+1)} = U_{\pi_n} + P_{\pi_n} W^{(h)}$ for $h = 0, \ldots, m-1$.
% Each additional evaluation sweep costs one matrix-vector multiply with $P_{\pi_n}$.
% As shown below, increasing $m$ improves the local linear convergence rate.
% Thus $m$ controls the trade-off between per-iteration computation and local convergence speed. We summarize the MPI procedure in Algorithm~\ref{alg:mpi}.

\begin{algorithm}[t]
\caption{Modified Policy Iteration (MPI) for PUMCM}\label{alg:mpi}
\begin{algorithmic}[1]
\REQUIRE initial $V_0 \in [V_\pi, \mathbf{0}]$ for some proper $\pi$; evaluation depth $m \ge 1$; tolerance $\varepsilon_\text{in} > 0$
\FOR{$n = 0, 1, 2, \ldots$}
    \STATE $\pi_n(\cdot|s) \leftarrow \nabla H_s(Q_s(V_n))$ for all $s \in \mathcal{S}$ \hfill (greedy policy)
    \STATE $W \leftarrow V_n$
    \FOR{$h = 1, \ldots, m$}
        \STATE $W \leftarrow U_{\pi_n} + P_{\pi_n} W$ \hfill (policy evaluation sweep)
    \ENDFOR
    \STATE $V_{n+1} \leftarrow W$
    \IF{$\|T_* V_{n+1} - V_{n+1}\|_\infty < \varepsilon_\text{in}$}
        \STATE \textbf{break}
    \ENDIF
\ENDFOR
\end{algorithmic}
\end{algorithm}

% We now turn to the main global guarantee of MPI for the PUMCM network loading step.

\begin{theorem}[Global convergence of MPI]\label{thm:mpi_global}
The MPI iterations over PUMCM with a fixed $m \ge 1$ and initial value $V_0$ induced by some proper policy satisfy
% fix $m \ge 1$ and initialize $V_0 = V_{\pi_0}$ for some proper policy $\pi_0$.
% The MPI iterates~\eqref{eq:mpi_update} satisfy:
\begin{enumerate}[nosep, label=\textup{(\roman*)}]
    \item $V_0 \le V_1 \le \cdots \le V_n \le V_* \le \mathbf{0}$;
    \item $V_n \to V_*$ element-wise as $n \to \infty$.
\end{enumerate}
\end{theorem}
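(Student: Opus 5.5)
Our approach follows the classical monotone-convergence argument for modified policy iteration, adapted to the undiscounted setting via Proposition~\ref{prop:well-posedness} and Proposition~\ref{prop:Vstar}. The key invariant to maintain is
\[
T_* V_n \ge V_n \quad\text{and}\quad V_n \le V_* .
\]
At initialization, $V_0 = V_{\pi_0}$ for a proper admissible $\pi_0$. Lemma~\ref{lem:Tstar_properties}(iii) gives $T_*V_0 \ge V_0$, and Proposition~\ref{prop:Vstar} gives $V_0 \le V_*$. For the upper bound $V_* \le \mathbf{0}$, note that $V_* = V_{\pi_*} = (\mathbb{I}-P_{\pi_*})^{-1}U_{\pi_*}$. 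The fundamental matrix is nonnegative (Lemma~\ref{lem:neumann}) and $U_{\pi_*}<0$ by Proposition~\ref{prop:well-posedness}\ref{prop:universal-strict}, so $V_*\le\mathbf{0}$.

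\textbf{Inductive step (monotonicity).} Suppose $T_*V_n\ge V_n$ and $V_n\le V_*$. The greedy policy $\pi_n=\nabla H(Q(V_n))$ satisfies $T_{\pi_n}V_n = T_*V_n \ge V_n$ by Lemma~\ref{lem:choice_map}\ref{FY:max}. It is admissible because $H^*_s(\pi_n(\cdot|s))$ is finite by strong duality. Since $T_{\pi_n}$ is monotone (Lemma~\ref{lem:Tstar_properties}(i)), induction on the sweeps gives
\[
V_n \le T_{\pi_n}V_n \le T_{\pi_n}^2V_n \le \cdots \le T_{\pi_n}^m V_n = V_{n+1}.
\]
This also yields $T_{\pi_n}V_{n+1}\ge V_{n+1}$, because $T_{\pi_n}^{m+1}V_n \ge T_{\pi_n}^m V_n$. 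Hence $T_*V_{n+1}\ge T_{\pi_n}V_{n+1}\ge V_{n+1}$, where the first inequality holds because $T_*$ is a max over policies, so the first half of the invariant propagates.

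\textbf{Inductive step (upper bound).} For $V_{n+1}\le V_*$, use $T_{\pi_n}W\le T_*W$ together with monotonicity of $T_*$: from $V_n\le V_*$ we get $T_{\pi_n}V_n \le T_*V_n\le T_*V_*=V_*$, and iterating $m$ times gives $V_{n+1}\le V_*$. This completes (i).

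\textbf{Convergence, part (ii).} The sequence $(V_n)$ is nondecreasing and bounded above by $V_*$, so it converges element-wise to some $\bar V\le V_*$. We then need $\bar V=T_*\bar V$, after which uniqueness of the fixed point (Proposition~\ref{prop:Vstar}) gives $\bar V=V_*$. We have the sandwich
\[
V_n \le T_*V_n = T_{\pi_n}V_n \le V_{n+1}.
\]
The operator $T_*V(s)=H_s(Q_s(V))$ is continuous, since $H_s$ is $C^1$ and $1$-Lipschitz in $\|\cdot\|_\infty$ by \ref{asm:surplus_grad}. Passing to the limit gives $\bar V \le T_*\bar V \le \bar V$, so $\bar V=T_*\bar V$.

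\textbf{Main obstacle.} The step most likely to need care is the sandwich $T_{\pi_n}V_n\le V_{n+1}$ when $m\ge1$, which uses the chain $V_n\le T_{\pi_n}V_n\le\cdots$. Finiteness of every $U_{\pi_n}$ also needs checking. Both issues reduce to the FY equality of Lemma~\ref{lem:choice_map}, which shows the greedy policy attains $H_s$ with finite perturbation. Crucially, no contraction is needed: only monotonicity, boundedness from the proper initial policy, and the uniqueness from Proposition~\ref{prop:Vstar}. The properness of each $\pi_n$ is not required for the argument, but it follows a posteriori from Proposition~\ref{prop:well-posedness}, since $V_n\le T_{\pi_n}V_n$ with $U_{\pi_n}<0$ precludes recurrent non-terminal classes. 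If the initialization is only $V_0\in[V_\pi,\mathbf 0]$ rather than exactly $V_\pi$, we would first check that $T_*V_0\ge V_0$ still holds; otherwise we restrict to $V_0=V_\pi$ as in the theorem statement.
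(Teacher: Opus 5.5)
Your proposal is correct and follows the paper's proof essentially step for step. You propagate $V_n \le T_* V_n$ through the greedy-policy chain $V_n \le T_{\pi_n}V_n \le \cdots \le T_{\pi_n}^m V_n = V_{n+1} \le T_{\pi_n}V_{n+1} \le T_* V_{n+1}$ (the paper isolates this as its iterative-improvement lemma), bound $V_n \le V_*$ by induction, and conclude via the sandwich $V_n \le T_* V_n \le V_{n+1}$, continuity of $T_*$, and uniqueness of the fixed point. The only differences are cosmetic: you obtain $V_* \le \mathbf{0}$ directly from $V_* = (\mathbb{I}-P_{\pi_*})^{-1}U_{\pi_*}$ instead of via the paper's separate induction $V_n \le \mathbf{0}$, and you run the upper-bound induction through monotonicity of $T_*$ rather than of $T_{\pi_n}$.
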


\begin{proof}[Proof sketch]
The theorem is proved through an intermediate Lemma~\ref{lem:subsolution}, which shows inductively that the MPI iterates improve monotonically and stay bounded. Hence, the iterates converge to some $\bar V$. Then a direct sandwiching argument ($V_n\le T_*V_n\le V_{n+1}$) yields that $\bar V$ is a fixed point of $T_*$, which coincides with the optimal value $V_*$ by its uniqueness (Proposition~\ref{prop:Vstar}). See Appendix~\ref{app:proof_mpi_global} for the detailed proof.
% The theorem is proved through an intermediate Lemma~\ref{lem:subsolution} and \ref{lem:fixedpoints_Tpim}. The former shows inductively that the MPI iteratively improves value such that $V_n\le V_{n+1}$ for all iterates $n$. The latter proves that the operator defined by the $m$-step policy evaluation admits the same unique solution as the Bellman operator, which further coincides with the optimal value due to its uniqueness (Proposition~\ref{prop:Vstar}). See Appendix~\ref{app:proof_mpi_global} for the detailed proof.
\end{proof}
% Since $V_0=V_{\pi_0}$ for a proper policy, greedy improvement gives $V_0 \le T_*V_0$ (Lemma~\ref{lem:Tstar_properties}).
% Lemma~\ref{lem:subsolution} then shows inductively that the MPI iterates satisfy $V_{n+1}\ge V_n$ for all $n$, a nondecreasing sequence.
% Since $V_0 \le V_*$ by Proposition~\ref{prop:Vstar}, induction then shows that $V_n \le V_*$ for all $n$.
% Thus $(V_n)$ is componentwise nondecreasing and bounded above by $V_*$, so it converges componentwise to some $\bar V \le V_*$.

% By continuity of $V \mapsto \pi(V) \coloneq \nabla H(Q(V))$ and $(\pi,V)\mapsto T_\pi^mV$, the limit satisfies $\bar V = T_{\pi(\bar V)}^m \bar V$.
% Lemma~\ref{lem:fixedpoints_Tpim} then yields $\bar V = T_{\pi(\bar V)}\bar V$.
% Since $\pi(\bar V)$ is greedy at $\bar V$, we have $T_{\pi(\bar V)}\bar V = T_*\bar V$ by Lemma~\ref{lem:fenchel-young}, so $\bar V$ is a fixed point of $T_*$.
% Uniqueness therefore implies $\bar V = V_*$ (Proposition~\ref{prop:Vstar}).
% The full proof is given in Appendix~\ref{app:deferred-sec5}.

Theorem~\ref{thm:mpi_global} gives the global convergence guarantee for the inner loop that applies to the entire PUMCM class.
% developed in Section~\ref{sec:PUMCM}, including demand models that generate corner solutions.
The following result further quantifies the local convergence rate based on the policy evaluation step $m$.
% We next quantify the local convergence speed of MPI and relate it directly to the evaluation depth $m$.

% \begin{theorem}[Local linear convergence of MPI]\label{thm:mpi_local}
% Under the conditions of Theorem~\ref{thm:mpi_global}, let $\mathcal{M}_m(V)\coloneqq T_{\pi(V)}^mV$ and $G_*\coloneqq P_{\pi_*}$ with $\pi_*=\nabla H(Q(V_*))$.
% Fix any matrix norm $\|\cdot\|$ with $\|G_*^m\| < 1$ and any $\varepsilon \in (0, 1 - \|G_*^m\|)$.
% Then:
% \begin{enumerate}[nosep,label=\textup{(\roman*)}]
%     \item there exists a neighborhood $\mathcal{U}$ of $V_*$ such that $\|\mathcal{M}_m(V) - V_*\| \le \nu\,\|V - V_*\|$ for all $V \in \mathcal{U}$, where $\nu = \|G_*^m\| + \varepsilon < 1$;
% \item $\limsup_{n \to \infty} \|V_{n+1} - V_*\| / \|V_n - V_*\| \le \|G_*^m\|$.
% \end{enumerate}
% \end{theorem}
\begin{theorem}[Local linear convergence of MPI]\label{thm:mpi_local}
Let $O_m(V) \coloneqq T^m_{\pi(V)}V$, where $\pi(V)$ denotes the greedy policy obtained at value $V$, and consider the transition $P_{\pi_*}$ at the optimal policy $\pi_*$. 
With any matrix norm $\|\cdot\|$ such that $||P^m_{\pi_*}|| < 1$ and $\varepsilon \in (0, 1 - ||P^m_{\pi_*}||)$,
\begin{enumerate}[nosep,label=\textup{(\roman*)}]
    \item there exists a neighborhood of $V_*$, denoted as $\mathcal{N}(V_*)$, and some constant $\nu = ||P^m_{\pi_*}|| + \varepsilon < 1$ such that $||O_m(V) - V_*|| \le \nu||V - V_*||$ for all $V \in \mathcal{N}(V_*)$;
    \item $\limsup_{n \to \infty}||V_{n+1} - V_*|| / ||V_n - V_*|| \le ||P^m_{\pi_*}||$.
\end{enumerate}
\end{theorem}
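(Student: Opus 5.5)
The plan is to linearize the MPI map $O_m$ at $V_*$ and show that its derivative there is exactly $P_{\pi_*}^m$. Part (i) will then follow from first-order differentiability, and part (ii) will follow from (i) together with the global convergence in Theorem~\ref{thm:mpi_global}.

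\textbf{Step 1: unroll the $m$ evaluation sweeps.} Write
\[
O_m(V)=\sum_{j=0}^{m-1}P_{\pi(V)}^j U_{\pi(V)}+P_{\pi(V)}^m V,
\]
and define the error $e=V-V_*$.

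\textbf{Step 2: use $V_*$ as a fixed point of $T_{\pi(V)}$ up to a defect.} Set
\[
\delta(V)\coloneqq T_{\pi(V)}V_*-V_* = T_{\pi(V)}V_*-T_*V_*\le 0 .
\]
This is nonpositive because $T_*$ is the maximum over policies (Lemma~\ref{lem:choice_map}). Since $T_\pi$ is affine with linear part $P_\pi$, we have $T_{\pi}^m V - V_* = T_\pi^m V - T_\pi^m V_* + (T_\pi^m V_* - V_*)$. This yields the exact identity
\[
O_m(V)-V_* = P_{\pi(V)}^m e+\sum_{j=0}^{m-1}P_{\pi(V)}^j\,\delta(V).
\]

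\textbf{Step 3: show the defect is second order, $\|\delta(V)\|=o(\|e\|)$.} Fenchel--Young gives, at each state $s$,
\[
\delta_s = \pi(V)_s\tran Q_s(V_*) - H_s^*(\pi(V)_s) - H_s(Q_s(V_*)).
\]
Also $H_s(Q_s(V)) = \pi(V)_s\tran Q_s(V) - H_s^*(\pi(V)_s)$. Subtracting,
\[
\delta_s = H_s(Q_s(V)) - H_s(Q_s(V_*)) - \nabla H_s(Q_s(V))\tran\bigl(Q_s(V)-Q_s(V_*)\bigr).
\]
This is minus a Bregman-type remainder of the $C^1$ convex function $H_s$. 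Since $\nabla H_s$ is continuous and $Q_s$ is affine in $V$, the mean value theorem gives $|\delta_s|\le \sup_{\xi}\|\nabla H_s(\xi)-\nabla H_s(Q_s(V))\|_1\,\|P(\cdot|s,\cdot)e\|_\infty = o(\|e\|)$. This is where Standing Assumption~\ref{asm:surplus}\ref{asm:surplus_convex} is used, and no $C^2$ assumption is needed.

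\textbf{Step 4: control the leading term.} By continuity of $V\mapsto\pi(V)$, hence of $P_{\pi(V)}$, we get $\|P_{\pi(V)}^m-P_{\pi_*}^m\|\to0$ as $V\to V_*$. The sum $\sum_j P_{\pi(V)}^j$ stays bounded near $V_*$. Combining with Steps 2 and 3,
\[
\|O_m(V)-V_*\|\le \bigl(\|P_{\pi_*}^m\|+\eta(V)\bigr)\|e\| \quad\text{with } \eta(V)\to0 .
\]
This uses the equivalence of norms on $\R^{|\mathcal{S}|}$, with the vector norm taken compatible with the chosen matrix norm. Choosing $\mathcal{N}(V_*)$ so that $\eta(V)\le\varepsilon$ there gives (i). Such a norm with $\|P_{\pi_*}^m\|<1$ exists because $\pi_*$ is proper (Proposition~\ref{prop:Vstar}), so $\rho(P_{\pi_*})<1$.

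\textbf{Step 5: deduce (ii).} Theorem~\ref{thm:mpi_global} gives $V_n\to V_*$, so the iterates eventually lie in every neighborhood. Dividing the bound from Step 4 by $\|V_n-V_*\|$ and letting $\eta\to0$ gives the limsup bound for every $\varepsilon>0$. If $V_n=V_*$ at some $n$, the iterates stay there and the ratio is trivial.

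The main obstacle is Step 3. With only $C^1$ regularity and possible corner solutions, $\pi(V)$ need not be differentiable. The argument must therefore rely on the first-order optimality structure ($\delta\le0$ as a Bregman remainder) rather than on a Jacobian of $O_m$. I would handle this via the Fenchel--Young identity as sketched, checking carefully that the uniform-continuity modulus of $\nabla H_s$ on a compact neighborhood of $Q_s(V_*)$ delivers the $o(\|e\|)$ bound.
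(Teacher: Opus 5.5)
Your proof is correct. It follows the paper's skeleton: unroll the $m$ sweeps, isolate the linear part $P^m_{\pi_*}$, show the remainder is $o(\|V-V_*\|)$, then use Theorem~\ref{thm:mpi_global} for (ii). Where you differ is in how you organize the remainder.

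The paper writes $O_m(V)-V_*=(V-V_*)-S_m(V)G(V)$ with the Bellman residual $G(V)=V-H(Q(V))$. It Taylor-expands the $C^1$ map $G$ at $V_*$, with Jacobian $\mathbb{I}-P_{\pi_*}$. It then disposes of the policy-variation term $(S_m(V)-S_m(V_*))G(V)$ using continuity of $S_m$ and $G(V)=O(\|V-V_*\|)$.

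Your identity is the same one after substituting $G(V)=(\mathbb{I}-P_{\pi(V)})(V-V_*)-\delta(V)$. The difference is that you keep $\pi(V)$ in the leading term and bound the fixed-point defect $\delta(V)$ directly, through its Fenchel--Young/Bregman form and the mean value theorem. Both arguments use only Standing Assumption~\ref{asm:surplus}\ref{asm:surplus_convex}.

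What each buys: your version also gives the sign $\delta(V)\le 0$, and hence the componentwise one-sided bound $O_m(V)-V_*\le P^m_{\pi(V)}(V-V_*)$. The paper's version packages the result as the derivative statement $\nabla O_m(V_*)=P^m_{\pi_*}$ (Lemma~\ref{lem:mpi_jacobian}).

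Your closing claim that the argument cannot go through a Jacobian of $O_m$ is inaccurate. Nondifferentiability of $\pi(\cdot)$ does not prevent differentiability of $O_m$ at the single point $V_*$. The policy variation enters only multiplied by a quantity that vanishes at $V_*$. In fact, your own Steps~3--4 give $O_m(V)-V_*=P^m_{\pi_*}(V-V_*)+o(\|V-V_*\|)$.

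Your remark on choosing a vector norm compatible with the matrix norm fills in a detail the paper leaves implicit.
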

\begin{proof}[Proof sketch]
The local convergence is established based on Lemma~\ref{lem:mpi_jacobian}, which shows $O_m$ is differentiable at $V_*$ with gradient $\nabla O_m(V_*) = P^m_{\pi_*}$. The local convergence rate is then established within the neighborhood of $V_*$ specified by $\varepsilon$. The full proof is in Appendix~\ref{app:proof_mpi_local}.
\end{proof}

% We show in Lemma~\ref{lem:mpi_jacobian} that $\mathcal{M}_m$ is differentiable at $V_*$, with $\nabla \mathcal{M}_m(V_*) = G_*^m$.
% Since $V_* = \mathcal{M}_m(V_*)$, we have
% \[
%     \mathcal{M}_m(V) - V_* = G_*^m(V - V_*) + [\mathcal{M}_m(V) - V_* - G_*^m(V - V_*)],
% \]
% where the term in the bracket is $o(\|V - V_*\|)$ because $\mathcal{M}_m$ is differentiable at $V_*$.
% Hence, for any $\varepsilon \in (0,1-\|G_*^m\|)$, there exists $r > 0$ such that $\|V-V_*\| < r$ implies $\|\mathcal{M}_m(V) - V_*\| \le (\|G_*^m\| + \varepsilon)\,\|V - V_*\|$, proving item~(i) with $\nu = \|G_*^m\| + \varepsilon$.
% Since $V_n \to V_*$ by Theorem~\ref{thm:mpi_global}, the iterates eventually enter $B_r(V_*)$; applying with $V = V_n$ and then letting $\varepsilon \downarrow 0$ yields item~(ii).
% The full proof is in Appendix~\ref{app:deferred-sec5}.

Theorem~\ref{thm:mpi_local} indicates $P_{\pi_*}$ and evaluation depth $m$ as key factors that govern the local convergence rate. 
Specifically, with a smaller value of $||P_{\pi_*}||$, fewer policy evaluations are needed to ensure fast convergence. This result gives rise to the potential for an endogenous design of $m$, which is left to future research. 
It is also worth noting that the inner loop problem is independent among user classes and thus can be easily parallelized. Further, the MPI for each inner loop can be warm-started using the previous solution to reduce the computation time.
% It also shows how the evaluation depth $m$ enters the rate, through the bound $\|G_*^m\| \le \|G_*\|^m$.
% Thus deeper policy evaluation suppresses the gap geometrically.

% This completes the inner layer.
% At the outer level, MPI serves as a loading oracle: given $c_n$, it returns the demand evaluation $x(c_n)$ needed to compute the excess supply $E(c_n)=z(c_n)-x(c_n)$.
% The next subsection treats these demand evaluations as given and selects outer methods whose convergence theory matches the monotone VI structure established in Section~\ref{sec:equilibrium}.

\subsection{Outer loop: Monotone VI solvers}\label{sec:base_solvers}

When the supply function satisfies Assumption~\ref{asm:supply}, it is easily proved that the excess supply operator $E$ is monotone and continuous, given that the demand function is also monotone and continuous (Proposition~\ref{prop:monotone_demand}, and see Appendix~\ref{app:proof_unique} for a similar proof).
Accordingly, the dual VI problem \eqref{eq:VI_PUME} shall be naturally solved by projection-based methods. In this study, we implement two monotone VI solvers that both belong to the first-order projection-based method~\citep{korpelevich1976extragradient, facchinei2003finite} but differ in the cost of operator evaluation in each iteration.

\begin{itemize}
    \item \textbf{Solodov--Tseng modified projection method (ST) with line search}~\citep{solodov1996modified}: The ST method performs two operator evaluations and two projections per iteration. Given the current solution $c_n$, it first computes a trial point as 
    \begin{align}\label{eq:ST_fst}
        \hat c_n = \Proj_\Omega(c_n - \lambda_n\, E(c_n)),
    \end{align}
    and then constructs the main iterate as 
    \begin{align}
        c_{n+1} = \Proj_\Omega\!\bigl(c_n - \gamma_n[(c_n-\hat c_n) - \lambda_n E(c_n) + \lambda_n E(\hat c_n)]\bigr).
    \end{align}
    The step size $\lambda_n > 0$ is determined by Armijo-type line search, i.e., shrinking from 1 until it satisfies
    \begin{align}
        \langle E(c_n)-E(\hat c_n), c_n-\hat c_n\rangle \le \frac{\delta \|c_n-\hat c_n\|^2}{\lambda_n}
    \end{align}
    for some fixed $\delta\in (0,1)$; with some fixed $\theta \in (0,2)$, the correction scale $\gamma_n$ is set as 
    \begin{align}\label{eq:ST_lst}
        \gamma_n = \theta\,(1-\delta)\frac{\|c_n-\hat c_n\|^2}{\|(c_n-\hat c_n)-\lambda_n E(c_n)+\lambda_n E(\hat c_n)\|^2}.
    \end{align}

    It has been proved that ST with line search globally converges to a VI solution when the operator is continuous and monotone~\citep[Theorem~3.2]{solodov1996modified}. It thus converges to the unique PUME when the condition in Theorem~\ref{thm:uniqueness} holds. 

    \item \textbf{Adaptive golden ratio algorithm (aGRAAL)}~\citep{malitsky2020golden}: The aGRAAL method reduces the per-iteration cost by performing single operator evaluation and projection, also avoiding line search. Given the golden ratio $\varphi = (1+\sqrt{5})/2$, it maintains an intermediate variable $\theta_n := \varphi \lambda_n/\lambda_{n-1}$ and updates the step size $\lambda_n$ as 
    \begin{align}\label{eq:aGRAAL_fst}
        \lambda_n=\min\left\{\bar\lambda,\; 
        (1/\varphi + 1/\varphi^{2}) \lambda_{n-1},\;
        \frac{\varphi \theta_{n-1}}{4\lambda_{n-1}}
        \frac{\normM{c_n-c_{n-1}}^2}{\|{E(c_n)-E(c_{n-1})}\|_{M^{-1}}^2}\right\},
    \end{align}
    where $\bar\lambda$ denotes the maximum step size, and $\|v\|_M^2 = v\tran M v$ for some positive diagonal matrix $M$. 

    The solution is then updated as 
    \begin{align}
        c_{n+1} = \Proj_\Omega(\hat{c}_n - \lambda_n\, M^{-1} E(c_n)),
    \end{align}
    where the trial point $\hat{c}_n$ is computed as 
    \begin{align}\label{eq:aGRAAL_lst}
        \hat{c}_n = \frac{(\varphi - 1) c_n + \hat{c}_{n-1}}{\varphi}. 
    \end{align}
    Different from ST, the convergence of aGRAAL further requires local Lipschitz continuity of the operator~\citep[][Theorem 2]{malitsky2020golden}. For PUME, this condition holds whenever the demand map $x(c)$ is locally Lipschitz, and the additional smoothness Assumption~\ref{asm:surplus_extra} is sufficient.
\end{itemize}

It is worth noting that any VI solver with a global convergence guarantee for monotone continuous operators is valid for the outer loop problem. In Section~\ref{sec:experiments}, we compare the convergence performance of ST and aGRAAL.

\subsection{Meta-algorithm for acceleration}\label{sec:meta}

% While the VI solver presented in Section~\ref{sec:base_solvers} guarantees global convergence the computational efficiency could be far from satisfactory even in medium-size network (see Section~\ref{sec:exp_solver}). Hence, we propose a meta-algorithm to accelerate the outer loop meanwhile maintain the convergence property. 

The VI solvers presented in Section~\ref{sec:base_solvers} provide a globally convergent outer loop under the monotone and continuous structure of $E$.
However, as first-order methods, they usually require a large number of iterations to converge in practice (see Section~\ref{sec:exp_solver}). 
% However, they are first-order methods and may require many iterations in practice (see Section~\ref{sec:exp_solver}). 
This issue is particularly relevant for PUME because each evaluation of $E$ involves running the inner-loop MPI (see Section~\ref{sec:mpi}). 
One natural strategy for accelerating the outer loop is to implement quasi-Newton type methods, such as the Anderson acceleration~\citep{anderson1965iterative,walker2011anderson} and nonlinear generalized minimal residual (NGMRES)~\citep{washio1997krylov,desterck2012nonlinear}, that construct the next iterate using a history of past steps.
% To reduce the number of outer iterations, we consider extrapolation methods such as Anderson acceleration~\citep{anderson1965iterative,walker2011anderson} and nonlinear GMRES~\citep{washio1997krylov,desterck2012nonlinear}. 
Yet, directly applying these methods does not preserve the global convergence guarantee because the monotone VI solvers do not necessarily generate a contractive mapping. 
% Since the projection maps generated by monotone VI solvers are not necessarily contractive, directly applying these acceleration methods does not automatically preserve the global convergence guarantee.
Therefore, we develop a safeguarded meta-algorithm that is largely motivated by the stabilized Anderson acceleration proposed in \citet{zhang2020globally}. 
% We therefore adopt a safeguarded meta-algorithm. Similar to the globally convergent Anderson acceleration framework of \citet{zhang2020globally}, acceleration is used only to generate candidate iterates, while the base solver remains responsible for convergence. 
The algorithm periodically restarts from the base solver and accepts an accelerated candidate only when it satisfies a merit-based condition, known as \emph{safeguard checking}. 
Accordingly, the restart mechanism exploits the global convergence property of the base VI solver, while the safeguard prevents acceleration steps from degrading the convergence performance, which together guarantees the global convergence.

To construct the safeguard, we first define a merit function as 
\begin{equation}\label{eq:merit_def}
    \zeta(c) \coloneqq \langle E(c),\, r(c)\rangle,
\end{equation}
where $r$ is the natural residual evaluated as 
\begin{equation}\label{eq:res}
    r(c) \coloneqq c - \mathrm{Proj}_\Omega\bigl(c - E(c)\bigr).
\end{equation}
As $r(c)$ vanishes if and only if $c$ solves the VI problem~\eqref{eq:VI_PUME}~\citep[see e.g.,][]{facchinei2003finite}, the merit function $\zeta(c) $ essentially provides a scalar certificate of the equilibrium violation.
The continuity and boundedness properties of $\zeta$ are established in Lemma~\ref{lem:merit} (see Appendix~\ref{app:meta_convergence}), which serve as the foundation of the global convergence of the proposed meta-algorithm (Theorem~\ref{thm:meta}). 

% To construct the safeguard, we use a merit function adapted to VI~\eqref{eq:VI_PUME}. For a feasible cost vector $c \in \Omega$, define the natural residual
% \begin{equation}\label{eq:res}
%     r(c) \coloneqq c - \mathrm{Proj}_\Omega\bigl(c - E(c)\bigr),
% \end{equation}
% which vanishes if and only if $c$ solves~\eqref{eq:VI_PUME}~\citep[see e.g.,][]{facchinei2003finite}. Accordingly, we define the merit function as
% \begin{equation}\label{eq:merit_def}
%     \zeta(c) \coloneqq \langle E(c),\, r(c)\rangle.
% \end{equation}
% The residual $r(c)$ measures violation of the equilibrium condition, and $\zeta(c)$ provides a scalar certificate of this violation.
% By the variational characterization of projection, $\zeta(c) \ge \|r(c)\|^2 \ge 0$ for all $c \in \Omega$, with $\zeta(c)=0$ only at the equilibrium $c^*$.
% Moreover, $\zeta$ is continuous and has bounded sublevel sets (Lemma~\ref{lem:merit}) due to the coercive structure of $E$ (Assumption~\ref{asm:supply}~\ref{asm:supply_coercive}), so a merit-based safeguard keeps the accepted iterates in a bounded region that ensures convergence.

% Given the original VI condition \eqref{eq:VI_PUME}, we define the residual for a feasible cost vector $c$ as 
% \begin{equation}\label{eq:res}
%     r(c) \coloneqq c - \mathrm{Proj}_\Omega\bigl(c - E(c)\bigr),
% \end{equation}
% and a merit function as 
% \begin{equation}\label{eq:merit_def}
%     \zeta(c) \coloneqq \langle E(c),\, r(c)\rangle.
% \end{equation}

Algorithm~\ref{alg:meta} outlines the meta-algorithm. At each iteration, the base solver (e.g., ST and aGRAAL), denoted by $B:\Omega \to \Omega$, first generates a candidate $c^B_{n+1} = B(c_n)$. Unless a restart is triggered (line~5), the acceleration oracle, denoted as $ACC:\mathcal{H}_n \to \Omega$, proposes another candidate $c^{ACC}_{n+1}= ACC(\mathcal{H}_n)$ based on a finite history $\mathcal{H}_n$ (e.g., recent iterates, step sizes, residuals, etc.). $c^{ACC}_{n+1}$ is accepted when it achieves sufficient merit decrease, i.e., $\zeta(c_{n+1}^{\mathrm{ACC}}) \le \eta \rho_n$ (line~8), 
where $\eta\in (0,1)$ is a safeguard factor and $\rho_n$ is a reference merit level. If the condition fails, the fallback option $c^B_{n+1}$ is taken, and a restart is triggered (line~11). The reference $\rho_n$ is constructed as a non-increasing sequence based on the previous merit and a presumed decay factor $\tau\in(0,1)$ (line~17). The update shares the same spirit as the non-monotone line search method~\citep{grippo1986nonmonotone}, which does not require monotonically decreasing merits over consecutive iterates but ensures an overall decay pattern.

\begin{algorithm}[t]
\caption{Meta-algorithm for PUME}\label{alg:meta}
\begin{algorithmic}[1]
\REQUIRE Initial cost $c_0 \in \Omega$; base solver $B$; acceleration oracle $ACC$; safeguard factor $0 < \eta < \tau < 1$; restart period $R\in\mathbb{N}$; gap threshold $\varepsilon_\text{out} > 0$.
\STATE $n \leftarrow 0$, $N_{\mathrm{ACC}} \leftarrow 0$, $\rho_n \leftarrow \zeta(c_0)$
\LOOP
    \STATE $c_{n+1}^{B} \leftarrow B(c_n)$
    % ; \quad $g_n \leftarrow c_n - c_{n+1}^{B}$
    \IF{$n = 0$ \textbf{or} $N_{ACC} \ge R$}
        \STATE $c_{n+1} \leftarrow c_{n+1}^{B}$, \quad $N_{ACC} \leftarrow 0$ \hfill (force base solution and restart)
    \ELSE
        \STATE $c_{n+1}^{ACC} \leftarrow ACC(\mathcal{H}_n)$
        \IF{$\zeta(c_{n+1}^{ACC}) \le \eta\rho_n$}
            \STATE $c_{n+1} \leftarrow c_{n+1}^{ACC}$, \quad $N_{ACC} \leftarrow N_{ACC} + 1$ \hfill (accept accelerated candidate)
        \ELSE
            \STATE $c_{n+1} \leftarrow c_{n+1}^{B}$, \quad $N_{ACC} \leftarrow 0$ \hfill (reject accelerated candidate and restart)
        \ENDIF
    \ENDIF
    \IF{relative residual $\frac{\|r(c_{n+1})\|_\infty}{\max(1,\; \|c_{n+1}\|_\infty)} < \varepsilon_\text{out}$}
        \STATE \textbf{break}
    \ENDIF
    \STATE $\rho_{n+1} \leftarrow \min \bigl\{\rho_n,\; \max\{\tau\rho_n,\, \zeta(c_{n+1})\}\bigr\}$ \hfill (reference update)
    \STATE $n \leftarrow n + 1$
\ENDLOOP
\end{algorithmic}
\end{algorithm}

% The key feature of Algorithm~\ref{alg:meta} is the separation between convergence and acceleration. The base solver supplies the convergence guarantee, while the oracle only proposes feasible candidates. The following theorem states the resulting global convergence property.

In this study, we implement Algorithm~\ref{alg:meta} with two acceleration oracles. For both oracles, we use $m_{ACC}$ to denote the memory depth. 
\begin{itemize}
    \item \textbf{Anderson acceleration (AA, Type-I)}~\citep{anderson1965iterative,walker2011anderson,zhang2020globally}: 
    Denote the fixed-point gap at past iteration $j$ as $g_j = c_j - c_{j+1}^{\mathrm{B}}$. At iteration $n$, AA chooses affine weights $\alpha_j$ ($j=\max(0,n-m_{ACC}), \dots, n-1$) as the optimal solution to the following problem:
    \begin{equation}\label{eq:aa_ls}
        \min_{\mathbf{1}\tran \alpha = 1} \Bigl\|\textstyle\sum_{j} \alpha_j\, g_j\Bigr\|^2 + \eta_{\mathrm{reg}} \|\alpha\|^2,
    \end{equation}
    where the second term serves as a regularizer with parameter $\eta_{\mathrm{reg}} >0$. 
    % Let $g_j = c_j - c_{j+1}^{\mathrm{B}}$ denote the fixed-point residual at $c_j$. Over the window of the $m_{\mathrm{ACC}}$ most recent residuals, AA chooses affine weights $\alpha$ for which the combined residual $\sum_j \alpha_j\, g_j$ is as close to zero as possible under regularization $\eta_{\mathrm{reg}} >0$,
    % \begin{equation}\label{eq:aa_ls}
    %     \alpha^* \in \arg\min_{\mathbf{1}\tran \alpha = 1} \Bigl\|\textstyle\sum_{j} \alpha_j\, g_j\Bigr\|^2 + \eta_{\mathrm{reg}} \|\alpha\|^2,
    % \end{equation}
    
    The accelerated solution is then constructed as 
% and applies the same weights to the corresponding base iterates to form the accelerated iterate:
    \begin{equation}\label{eq:aa_candidate}
        c_{n+1}^{ACC} = \Proj_\Omega\!\Bigl(\textstyle\sum_{j} \alpha_j\, c_{j+1}^{B}\Bigr).
    \end{equation}

    % \kz{Given a memory depth $m_{\mathrm{AA}}$, let $g_j = c_j - c_{j+1}^{\mathrm{B}}$ denote the fixed-point residual and form the residual-difference matrix
    % $Y_n = [\Delta g_{n-m_n}, \ldots, \Delta g_{n-1}]$ from the $m_n \le m_{\mathrm{AA}}$ most recent differences $\Delta g_j = g_{j+1} - g_j$.
    % AA solves the unconstrained least-squares problem with regularization parameter $\eta_{\mathrm{reg}} > 0$:
    % \begin{equation}\label{eq:aa_ls}
    %     \gamma_n^* \in \arg\min_{\gamma \in \R^{m_n}} \|g_n - Y_n \gamma\|^2 + \eta_{\mathrm{reg}} \|\gamma\|^2
    % \end{equation}
    % and derives mixing weights $\alpha \in \mathbb{R}^{m_n+1}$ satisfying $\mathbf{1}\tran \alpha = 1$ via
    % $\alpha_0 = \gamma_1^*$, $\alpha_j = \gamma_{j+1}^* - \gamma_j^*$ for $j = 1, \ldots, m_n-1$, and $\alpha_{m_n} = 1 - \gamma_{m_n}^*$~\citep{zhang2020globally}.
    % The candidate is then the projected affine combination of the corresponding base iterates:
    % \begin{equation}\label{eq:aa_candidate}
    %     c_{n+1}^{\mathrm{ACC}} = \Proj_\Omega\!\Bigl(\textstyle\sum_{j=0}^{m_n} \alpha_j\, c_{n-m_n+j+1}^{\mathrm{B}}\Bigr).
    % \end{equation}}

    \item \textbf{Nonlinear GMRES (NGMRES)}~\citep{washio1997krylov,desterck2012nonlinear}: 
    NGMRES follows a similar structure of AA but differs in two respects. First, it tracks the natural residuals $r_j = c_j - \Proj_\Omega(c_j - E(c_j))$ over the history, instead of the fixed-point gap. 
    Second, the natural residual of the new base solution $c^B_{n+1}$, denoted as $r^B_{n+1}$, is used to solve the weights $\alpha$. The corresponding optimization problem becomes 
    \begin{equation}\label{eq:ngmres_ls}
        \min \Bigl\| r^B_{n+1} + \textstyle\sum_{j} \alpha_j\,(r_j - r^B_{n+1}) \Bigr\|^2 + \eta_{\mathrm{reg}} \|\alpha\|^2,
    \end{equation}
    and the accelerated solution is constructed as
    \begin{equation}\label{eq:ngmres_candidate}
        c_{n+1}^{ACC} = \Proj_\Omega\!\Bigl(c_{n+1}^{B} + \beta_n \textstyle\sum_{j} \alpha_j\,(c_j - c_{n+1}^{B})\Bigr),
    \end{equation}
    where $\beta_n \in (0, 1]$ is a damping parameter that controls the extrapolation step. 

\end{itemize}

The global convergence conditions of the proposed meta-algorithm are summarized in the following theorem. 

\begin{theorem}[Meta-algorithm global convergence]\label{thm:meta}
Suppose the same conditions of Theorem~\ref{thm:uniqueness} hold and, from any initial point $c_0\in \Omega$, the base solver $B$ produces a sequence of solutions satisfying the following conditions:
% Assume the conditions of Theorems~\ref{thm:existence}--\ref{thm:uniqueness} hold, and let $\zeta$ be the merit function~\eqref{eq:merit_def}.
% Suppose the base-solver map $\mathcal{B}_n: \Omega \rightarrow \Omega, \forall n$ satisfies, for every starting point $\hat{c} \in \Omega$:
\begin{enumerate}[nosep,label=\textup{(B\arabic*)}]
    \item \textbf{Uniform iterate bound:} 
    There exists a metric norm $\normM{\cdot}$ and a constant $C \geq 1$ such that $\normM{c_n^B - c^*} \leq C \normM{c_0 - c^*}$ for all $n$;
    % $\zeta(c_n^B) \le \sigma(\zeta(c_0))$ for all $n$, for
    % a non-decreasing map $\sigma:\R_+ \to \R_+$ with $\sigma(\alpha)\to 0$ as $\alpha\to 0^+$.
    % the base-solver iterates $(c_n^{\mathrm{B}})$ initialized at $\hat{c}$ satisfy $\zeta(c_n^{\mathrm{B}}) \le \sigma(\zeta(\hat{c}))$ for all $n$, where $\sigma: \R_+ \to \R_+$ is nondecreasing with $\sigma(0) = 0$ and $\sigma(\alpha) \to 0$ as $\alpha \to 0^+$;
    \item \textbf{Merit convergence:} $\zeta(c_n^B) \to 0$ as $n\to \infty$.
\end{enumerate}
Then, for any $0 < \eta < \tau < 1$ and $R \in \mathbb{N}$, Algorithm~\ref{alg:meta} guarantees the following results:
% Let $\mathcal{A}_n:\mathcal{H}_n \to \Omega$ be any feasible acceleration oracle.
% Then Algorithm~\ref{alg:meta} with $0 < \eta < \tau < 1$ and $R \in \mathbb{N}$ satisfies:
\begin{enumerate}[nosep,label=\textup{(\roman*)}]
    \item \textbf{Merit convergence:} $\zeta(c_n) \to 0$ as $n\to\infty$.
    \item \textbf{Solution boundedness}: The sequence of iterates $\{c_n\}$ is bounded.
    \item \textbf{Equilibrium convergence:} $c_n \to c^*$ as $n\to\infty$, where $c^*$ is the unique PUME.
\end{enumerate}
\end{theorem}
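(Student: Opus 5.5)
The plan is to rest the whole argument on a structural lemma about the merit function, the analogue of Lemma~\ref{lem:merit}: $\zeta$ is continuous on $\Omega$, satisfies $\zeta(c)\ge\|r(c)\|^2\ge 0$, vanishes only at the unique solution $c^*$, and has bounded sublevel sets. The first three properties come from the variational characterization of the projection. Write $p=\Proj_\Omega(c-E(c))$. Then $\langle c-E(c)-p,\,c-p\rangle\le 0$, hence $\langle E(c),r(c)\rangle\ge\|r(c)\|^2$. Continuity of $\zeta$ follows from continuity of $E$ (Proposition~\ref{prop:monotone_demand} together with continuity of $z$). Uniqueness of the zero uses Theorem~\ref{thm:uniqueness}.

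For bounded sublevel sets I would use the coercivity of $E$ obtained in the proof of Theorem~\ref{thm:existence}. If $\|c_k\|\to\infty$ with $\zeta(c_k)$ bounded, then $\|r(c_k)\|$ is bounded. Monotonicity gives $\langle E(c_k),c_k-c^*\rangle\ge\langle E(c^*),c_k-c^*\rangle$. Coercivity says $\langle E(c_k),c_k-\hat c\rangle/\|c_k-\hat c\|\to\infty$, and I would combine this with the projection inequality at $p_k$ tested against $\hat c$ to contradict the boundedness of $r(c_k)$. This is the step I expect to be the main obstacle. I would first try the estimate $\langle E(c_k),\,c_k-\hat c\rangle\le \zeta(c_k)+\langle E(c_k),\,p_k-\hat c\rangle$, followed by $\langle E(c_k),p_k-\hat c\rangle\le\langle c_k-p_k,p_k-\hat c\rangle$, which is at most $\|r(c_k)\|\,\|p_k-\hat c\|$. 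This makes the left side grow at most linearly in $\|c_k\|$, which contradicts superlinear coercivity.

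With the lemma in hand, the algorithmic argument splits into two cases.

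\emph{Case 1: infinitely many accelerated candidates are accepted.} Each acceptance forces $\zeta(c_{n+1})\le\eta\rho_n$. By line~17, $\rho_{n+1}\le\max\{\tau\rho_n,\zeta(c_{n+1})\}\le\tau\rho_n$ after an acceptance, and $\rho_n$ is non-increasing always. Hence $\rho_n\to 0$. To conclude $\zeta(c_n)\to 0$ along the full sequence, I need to control base-solver runs between acceptances. These runs are restarted base-solver trajectories from the last iterate, so (B1) bounds them in $\|\cdot\|_M$ by $C\normM{c_{\text{start}}-c^*}$. The start point has merit at most $\rho$, hence lies in a sublevel set that shrinks to $\{c^*\}$ as $\rho\to0$ (compactness plus uniqueness of the zero). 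Continuity of $\zeta$ then makes the merit along the run small.

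\emph{Case 2: only finitely many acceptances.} After some index the algorithm runs only base steps. The restart logic then makes the tail a single base-solver trajectory, and (B2) gives $\zeta(c_n)\to0$ directly. Here I should check that a forced restart re-initializes the solver's internal state consistently.

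In both cases the iterates stay in a bounded set. In Case 1, accepted points lie in the sublevel set $\{\zeta\le\rho_0\}$, and base runs are bounded via (B1) from those points. In Case 2, (B1) bounds the tail. This gives (ii). Finally, every cluster point $\bar c$ of the bounded sequence satisfies $\zeta(\bar c)=0$ by continuity, so $\bar c=c^*$ by uniqueness. A bounded sequence whose only cluster point is $c^*$ converges to it, which gives (iii).
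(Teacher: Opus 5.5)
Your proposal is correct and follows essentially the same route as the paper. Both establish the merit-function properties first: $\zeta\ge\|r\|^2$, a unique zero at $c^*$, continuity, and bounded sublevel sets via coercivity of $E$ and the projection inequality tested at $\hat c$. Both then split on finitely versus infinitely many acceptances, using geometric decay of $\rho_n$ and applying (B1) epoch by epoch from accepted points, whose sublevel radius shrinks to zero by compactness and uniqueness of the zero.

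Your caveat about re-initializing the base solver's internal state is an assumption the paper also makes silently, since it treats each post-acceptance run as a fresh base trajectory. Your direct boundedness argument and your cluster-point argument for (iii) are, if anything, slightly more careful than the paper's.
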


\begin{proof}[Proof sketch.]
The proof of merit convergence considers two scenarios of acceleration steps. 
If only finitely many accelerations are accepted, the algorithm eventually performs only base steps, so Condition~(B2) directly implies merit convergence $\zeta(c_n)\to 0$.
When infinite accelerations occur, the safeguard checking, reference update, and restarting mechanism together ensure merit convergence.
Specifically, an accepted candidate satisfies $\zeta(c^{ACC}_{n+1})\leq \min\{\rho_n, \max\{\tau\rho_n, \eta\rho_n\}\} \leq \tau\rho_n$, so each acceptance shrinks the reference by the factor $\tau < 1$. 
Hence the reference, along with the merits at the accepted steps, decays to zero geometrically.
%  each accepted accelerated iterate satisfies $\zeta(c^{ACC}_{n+1})\leq \min\{\rho_n, \max\{\tau\rho_n, \eta\rho_n\}\} \leq \tau\rho_n$, so the reference is decreased by a fixed factor $\tau < 1$.
% Hence, the reference, along with its accepted merits, tend to zero. 
On the other hand, between consecutive acceptances, the base iterates are restarted from the most recent accepted iterate, which lies progressively closer to $c^*$.
Condition (B1) then keeps the base iterates within a neighborhood of $c^*$, which also shrinks with the accepted merits. Thus, the merits of adopted base iterates vanish as well.
Altogether, the global merit converges $\zeta(c_n)\to 0$.
% Accordingly, we have $\zeta(c_{n+1}) \leq \tau^{\tilde{n}}\rho_0$. The index $\tilde{n}$ increases more slowly than $n$ because the merit could remain the same when a base solution with larger merit is adopted. Nevertheless, Condition (B1) leads to boundedness that ensures $\zeta(c_{n+1})\to 0$. 
The remaining results follow from the merit convergence, and 
the detailed proof is provided in Appendix~\ref{app:meta_convergence}.
\end{proof}

% There are two cases.
% If only finitely many accelerated steps are accepted, then after some iteration the method coincides with the pure base iteration, and \textup{(B2)} gives $\zeta(c_n) \to 0$.
% If infinitely many are accepted, each acceptance reduces the reference by the factor $\tau < 1$ (since $\eta < \tau$ forces $\zeta(c_{n+1}^{\mathrm{ACC}}) < \tau\,\rho_n$), yielding $\rho_{n_j} \le \tau^j\,\rho_0 \to 0$.
% The merit at each accepted iterate satisfies $\zeta(c_{n_j+1}) \le \eta\,\rho_{n_j} \le \eta\,\tau^j\,\rho_0 \to 0$.
% Between accepted steps, the base method generates the iterates and \textup{(B1)} controls their merit relative to the last accepted value.
% Lemma~\ref{lem:merit}~\textup{(M3)} then gives boundedness.
% Finally, any limit point $\bar{c}$ satisfies $\zeta(\bar{c}) = 0$ by continuity (Lemma~\ref{lem:merit}~\textup{(M2)}), hence $\bar{c} = c^*$ by Lemma~\ref{lem:merit}~\textup{(M1)}.
% Since $c^*$ is the unique limit point of a bounded sequence, $c_n \to c^*$.
% The complete proof is in Appendix~\ref{app:deferred-sec5}.
% \end{proof}

In short, Theorem~\ref{thm:meta} indicates that the convergence of the proposed meta-algorithm does not impose any restriction on the acceleration oracle, Instead, it only requires the base solver and its induced merit values to satisfy two mild conditions (B1 and B2). 
In particular, Condition~(B1) requires the corresponding base iterates to remain uniformly controlled by the initial error, while Condition~(B2) simply requests the convergence of the base solver. 
These conditions are standard stability property and typically follow from the convergence proof of the base methods~\citep{bauschke2017convex, solodov1996modified,malitsky2020golden}.
In Appendix~\ref{app:B1_proof}, we further verify that the proposed merit function~\eqref{eq:merit_def}, along with ST and aGRAAL presented in Section~\ref{sec:base_solvers}, indeed satisfies them and thus ensures global convergence of Algorithm~\ref{alg:meta}. 

\section{Specification of PUME components}\label{sec:spec}
This section discusses key components in PUME with a particular focus on their implications for the equilibrium solutions and properties. It starts with the family of surplus functions (Section~\ref{sec:spec_surplus}), followed by a brief discussion of the necessity of strictly negative stage surplus (Section~\ref{sec:spec_neg_surplus}). 
We then demonstrate two particular features of PUME that distinguish it from existing MTE models: i) the capability of producing corner solutions of demand flow (Section~\ref{sec:spec_demand}), and ii) the generalization to non-potential supply function (Section~\ref{sec:spec_supply}).

\subsection{A family of surplus functions}\label{sec:spec_surplus}
Recall that in Section~\ref{sec:surplus}, we establish the correspondence between the surplus function $H_s$ and the perturbation function $H^*_s$ through FY duality (Lemma~\ref{lem:choice_map})
% , along with the induced choice map $\nabla H_s(Q)$. 
In this section, we focus on a family of induced choice maps, named $\alpha$-entmax family~\citep{peters2019sparse,correia2019adaptively}, and show how it covers common surplus functions used in the literature. 
In Appendix~\ref{app:surplus-families}, we present the general conditions that guarantee a well-behaved surplus (see Proposition~\ref{prop:conjugate-construction-body}).

The $\alpha$-entmax family is defined as the solution to the following perturbed utility maximization problem:
\begin{align}\label{eq:general_perturbed_max}
    \max_{\pi \in \Delta_s} \pi\tran Q - H_s^*(\pi;\alpha),
\end{align}
where $H_s^*(\pi;\alpha)$ is the negative Tsallis entropy defined as
\begin{align}\label{eq:tsallis}
    H_s^*(\pi;\alpha) = \begin{cases}
        \mu_s\sum_{a\in\mathcal{A}_s}\pi_a\log \pi_a, & \alpha = 1,\\
        \frac{\mu_s}{\alpha(\alpha-1)}\left(\sum_{a\in\mathcal{A}_s}\pi_a^\alpha - 1\right),&  \alpha >1,
    \end{cases}
\end{align}
When $\alpha = 1$, Eq.~\eqref{eq:tsallis} recovers the Shannon entropy used in recursive logit models~\citep{fosgerau2013link,mai2015nested}:
\begin{align}
    H_s^*(\pi)= \mu_s\sum_{a\in\mathcal{A}_s}\pi_a\log \pi_a,
\end{align}
which corresponds to the log-sum-exp surplus and logit/softmax choice map:
\begin{align}
    H_s(Q) &= \mu_s \log \sum_{a \in \mathcal{A}_s} \exp\left(\frac{Q_a}{\mu_s}\right),\label{eq:log-sum-exp}\\
    \nabla H_s(Q) &= \frac{\exp(Q_a/\mu_s)}{\sum_{a'\in\mathcal{A}_s}\exp(Q_{a'}/\mu_s)}.\label{eq:softmax}
\end{align}
When $\alpha =2$, Eq.~\eqref{eq:tsallis} leads to the quadratic perturbation
\begin{align}\label{eq:sparsemax_proj}
    H_s^*(\pi)= \frac{\mu_s}{2}\|\pi\|_2^2,
\end{align}
and the sparsemax surplus and choice map~\citep{martins2016softmax}:
\begin{align}
    H_s(Q) &= \max_{\pi \in \Delta_s} \pi\tran Q - \frac{\mu_s}{2}\|\pi\|_2^2,\\
    \nabla H_s(Q) &= \text{Proj}_{\Delta_s} (Q/\mu_s).\label{eq:sparsemax}
\end{align} 
% The corresponding choice map $\nabla H_s$ is also known as sparsemax as it tends to produce sparser solutions~\citep{martins2016softmax}.

The $\alpha$-entmax family with $\alpha \in (1,2)$ can produce corner solutions while remaining a continuous choice map of $Q$. However, the corresponding surplus function and choice map no longer have closed-form solutions in general. One exception is the case of $\alpha=1.5$. 
We discuss the computation of the choice map in this special case, as well as the general cases, in Appendix~\ref{app:surplus-families}.

Table~\ref{tab:surplus} summarizes the above cases and their consequential properties of the surplus function and choice map, with detailed discussion provided in Appendix~\ref{app:surplus-families}.

\begin{table}[htb]
    \centering
    \small
    \caption{Properties of surplus function and induced choice map.}
    \begin{tabular}{llcccc}
        \toprule
         & Choice map & Base surplus conditions~ & $C^2$ surplus & Corner solution & Lipschitz \\
         &  & \ref{asm:surplus_convex}--\ref{asm:surplus_equiv} & (Assumption \ref{asm:surplus_extra}) &  & choice map \\
         \midrule
       $\alpha=1$  & logit/softmax & \checkmark & \checkmark & $\times$ & \checkmark \\
       $\alpha\in(1,2)$  & $\alpha$-entmax & \checkmark & \checkmark & \checkmark & \checkmark \\
       $\alpha=2$  & sparsemax & \checkmark & $\times$ & \checkmark & \checkmark \\
       \bottomrule
    \end{tabular}
    \label{tab:surplus}
\end{table}

The choice map as solution to Problem~\ref{eq:general_perturbed_max} at a Q-value vector $Q=[0,-\Delta,-2\Delta]\tran$ with varying $\Delta$ and $\alpha\in\{1,1.2, 1.5,2\}$ is illustrated in Figure~\ref{fig:choice_map}. 
% The properties of different surplus functions are further illustrated in Figure~\ref{fig:choice_map}, where Problem~\ref{eq:general_perturbed_max} is solved with $\alpha\in\{1,1.2, 1.5,2\}$ at a Q-value vector $Q=[0,-\Delta,-2\Delta]\tran$ with varying $\Delta$. 
It is easy to observe that the logit model $(\alpha=1)$ preserves full support at every $\Delta$. Even if the utility of Action 1 strongly dominates ($\Delta = 6$), Action 3, the worst option, still receives a strictly positive choice probability ($\pi_3>0$). 
In contrast, when $\alpha >1$, corner solutions occur at a finite utility gap, i.e., the choice probability reduces to zero for inferior actions at relatively large values of $\Delta$. 
Specifically, a larger value of $\alpha$ leads to sparser but less smooth solutions. For $\alpha\in(1,2)$, the choice map remains continuously differentiable, while it becomes piecewise affine when $\alpha =2$.

\begin{figure}[htb]
    \centering
    \includegraphics[width=1.0\textwidth]{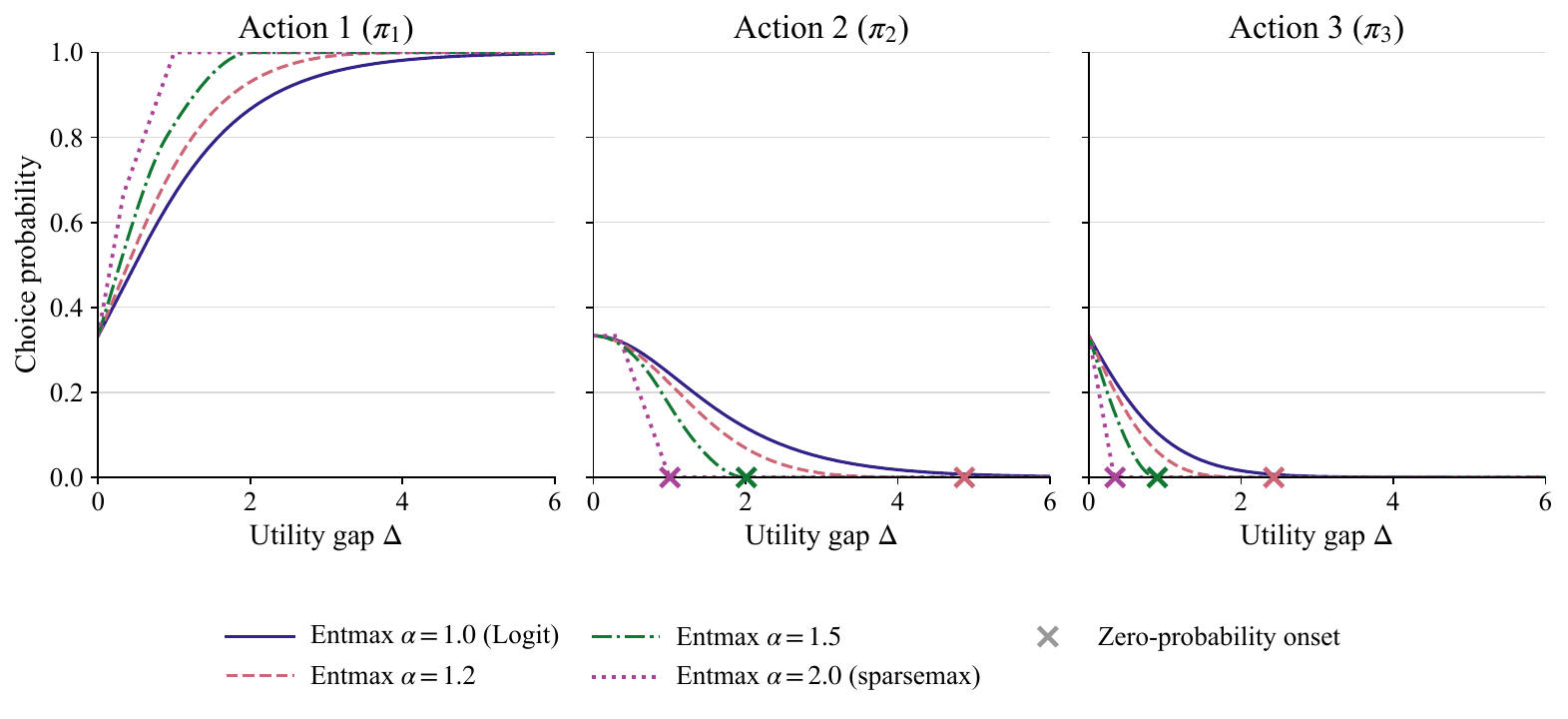}
    \caption{$\alpha$-entmax choice map against utility gap.}
    \label{fig:choice_map}
\end{figure}

To the best of our knowledge, the general $\alpha$-entmax family has not been adopted and analyzed in network equilibrium models. 
Specifically, its boundary behavior falls outside the standard Legendre-type regularity~\citep{Rockafellar+1970} assumed in the existing MTE literature, which generates only strictly positive choice probability~\citep{mcfadden1981econometric, oyama2022markovian}.
Nevertheless, the resulting PUMCM still satisfies the more general regularity conditions stated in Standing Assumption~\ref{asm:surplus}.
It is also worth noting that the corner solution generated by PUMCM provides an 
explicit interpretation of the endogenous consideration set~\citep{caplin2019rational}.
% endogenous consideration set interpretation. 
Particularly, the consideration set is defined for each state as the alternatives with positive support of $\nabla H_s$.
A similar interpretation has also been noted in \cite{tan2024endogenously} for a special case of static perturbed utility model .

\subsection{Strictly negative stage surplus}\label{sec:spec_neg_surplus}
In this section, we use a stylized network, shown in Figure~\ref{fig:small_net}, to show the necessity of the strictly negative stage surplus (Standing Assumption~\ref{asm:proper}\ref{asm:proper_nonpos}).
We solve the PUMCM for destination $d$ under the log-sum-exp surplus ($\alpha = 1$) with scale $\mu_s = 0.5$, and compare two link-cost specifications. 
Specifically, the well-posed case uses the free-flow times shown in Figure~\ref{fig:small_net} as link costs, which yields strictly negative surplus $H_s(u(s,\cdot))$ at every state (Standing Assumption~\ref{asm:proper}\ref{asm:proper_nonpos}). On the other hand, the ill-posed case reduces the cost of links $(o,u),(o,v),(u,o),(v,o)$ to zero such that the stage surplus is no longer strictly negative, thereby violating the assumption. This essentially creates a zero-cost cycle through the origin $o$.

\begin{figure}[htb]
    \centering
    \includegraphics[width=0.65\textwidth]{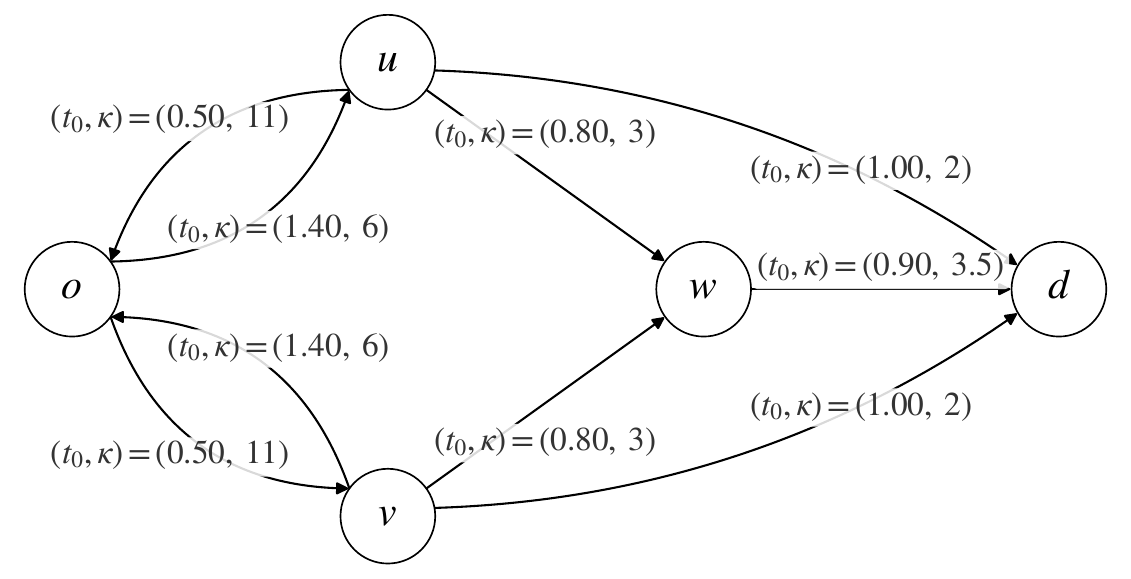}
    \caption{Stylized network.}
    \label{fig:small_net}
\end{figure}

Figure~\ref{fig:small_net_value} reports the value $V_n$ and the Bellman residual $\|T_*V_n - V_n\|_\infty$ over iterations in both tested scenarios. 
With zero-cost cycle, the value iteration diverges in the ill-posed case: the value continuously grows over iterations while the residual stalls. 
% The resulting policy concentrates on the return links, yielding a policy-induced transition matrix with spectral radius $\rho(P_\pi) = 1$, a singular evaluation matrix $(I - P_\pi)$, and zero probability of absorption, which is exactly the failure mode ruled out by Assumption~\ref{asm:proper}.
In contrast, the value iteration in the well-posed case converges linearly as per Theorem~\ref{thm:mpi_local}.

\begin{figure}[htb]
    \centering
    \includegraphics[width=0.88\textwidth]{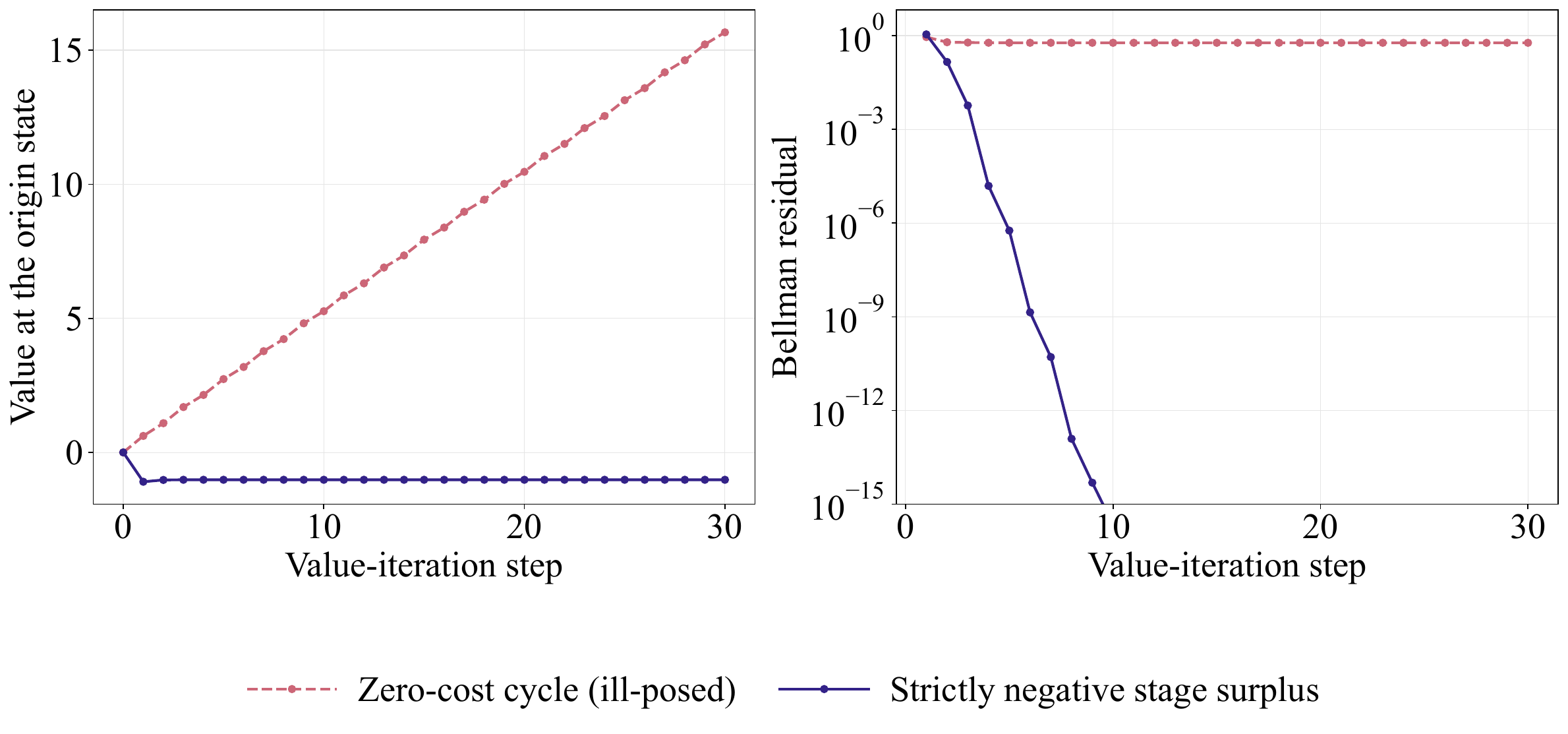}
    \caption{Value iteration on the stylized network.}
    \label{fig:small_net_value}
\end{figure}

\subsection{Interior vs corner solution of demand map}\label{sec:spec_demand}

We continue to use the same network and solve the PUMCM at free-flow travel times using the $\alpha$-entmax family with $\alpha=1$ and $\alpha=1.5$. 
In this experiment, we set a unit demand load  $q=1$ for the single OD pair $o\to d$. Figure~\ref{fig:small_net_demand} illustrates the link flows produced by the two PUMCM frameworks. When $\alpha=1$, the model reduces to the recursive logit~\citep{fosgerau2013link}. Accordingly, the induced demand map is interior and yields a positive flow on every link, even the backward links $(u,o)$ and $(v,o)$. On the other hand, PUMCM with $\alpha =1.5$ produces a more realistic corner solution with exactly zero flow on these links. This example gives strong evidence of the importance of ensuring corner solutions and thus demonstrates a wider applicability of PUMCM compared to existing logit-based models.

\begin{figure}[h]
    \centering
    \includegraphics[width=1.0\textwidth]{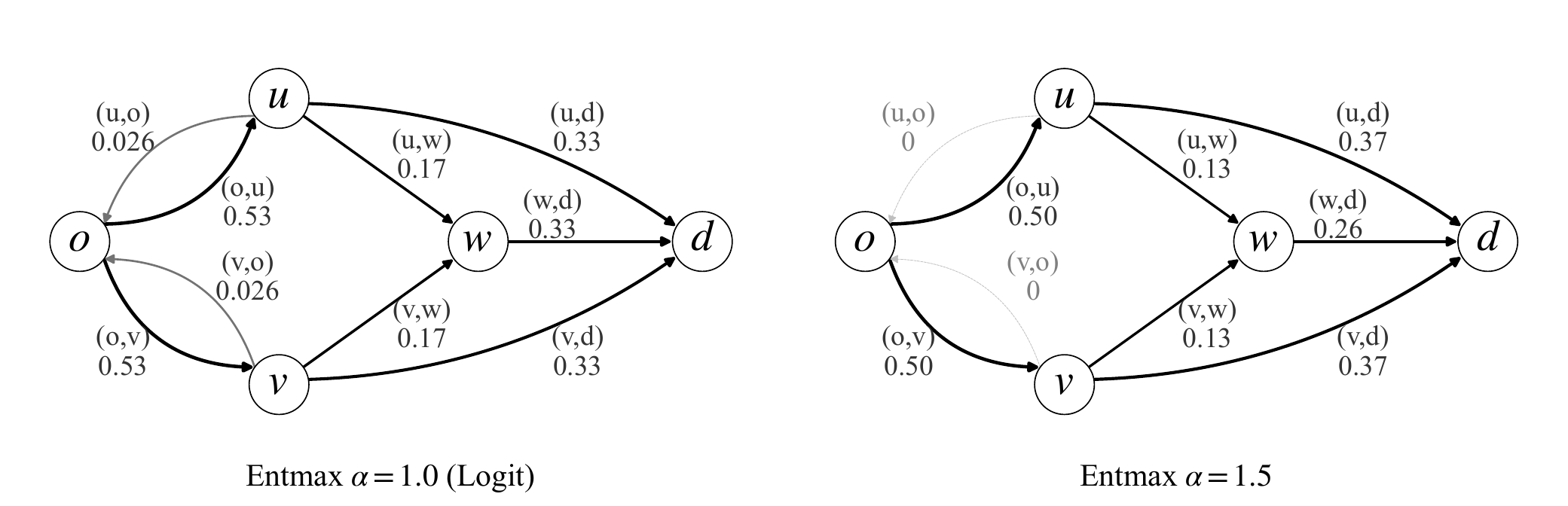}
    \caption{Link flows on the stylized network under recursive logit ($\alpha = 1$, interior solution) and $\alpha$-entmax ($\alpha = 1.5$, corner solution).}
    \label{fig:small_net_demand}
\end{figure}

\subsection{Potential vs non-potential PUME}\label{sec:spec_supply}

With the same setting of surplus functions, we proceed to solve PUME over the same network with uniform demand load $q=4$ on two OD pairs $o\to w$ and $o\to d$. The link travel time follows the standard BPR function
\begin{equation}\label{eq:BPR}
    t(x) = t_{0} \left[1 + 0.15 \left(\frac{x}{\kappa}\right)^4\right],
\end{equation}
where $t_0$ and $\kappa$ denote the free-flow time and capacity, respectively. The link-specific parameters are labeled in Figure~\ref{fig:small_net}. 

In this experiment, we define the vector-valued supply function as
\begin{align}\label{eq:nonpotential_supply}
    z(c) = \left[\mathbb{I} + \iota\,W\right]t^{-1}(c),
\end{align}
where $t^{-1}$ refers to the inverse of BPR function, $W$ is a sparse, row-stochastic asymmetric matrix encoding the interdependence among link travel times (e.g., congestion spillover), and parameter $\iota\in[0,1)$ indicates the coupling intensity. 
% In this example, the detour links $(u,w),(v,w)$ congest the direct links $(u,d),(v,d)$ at the diverge nodes, so a larger $\varepsilon$ makes the direct paths more congested. 
Accordingly, when $\iota=0$, there are no link interactions and the corresponding PUME admits a potential function~\citep{baillon2008markovian}. 
% A direct calculation shows that the symmetric part of $A$ is positive definite whenever $\varepsilon < 1/(1-\lambda_{\min})$, where $\lambda_{\min}$ is the smallest eigenvalue of $(W+W\tran)/2$, so that the monotonicity condition required by the VI formulation is preserved.

Figure~\ref{fig:small_net_supply} plots the fraction of detour flow, i.e., the sum of flows on $(u,w),(v,w)$ divided by the total flow, and the average system travel time (ASTT) against $\varepsilon$. Under both surplus functions, the detour flow increases monotonically with $\varepsilon$ as the coupling makes the direct paths for OD $o\to d$ more congested. Overall, the recursive logit model leads to higher detour flow, whereas the detour probability grows faster in the $\alpha$-entmax model because the coupling destroys the sparsity in route choice. Nevertheless, the average travel time of recursive logit remains higher than that under $\alpha$-entmax due to cycling flows (see Figure~\ref{fig:small_net_demand}).

\begin{figure}[h]
    \centering
    \includegraphics[width=0.8\textwidth]{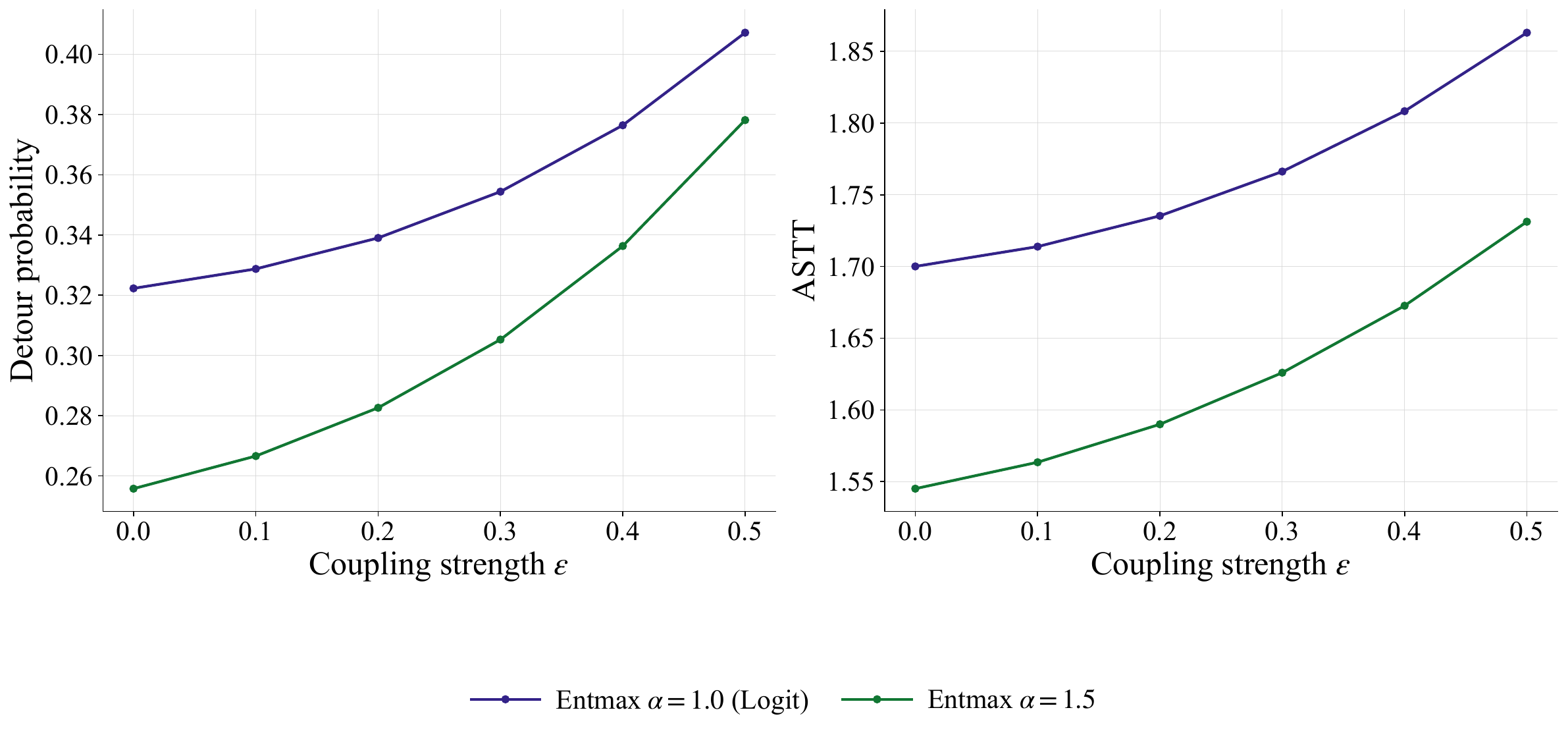}
    \caption{PUME subject to asymmetric link interactions.}
    \label{fig:small_net_supply}
\end{figure}

\section{Numerical experiments}\label{sec:experiments}
% \kz{(CONTINUE FROM HERE)}

The numerical experiments presented in this section are organized in three parts. We first demonstrate the effectiveness of the proposed meta-algorithm using a small-sized benchmark network (Section~\ref{sec:exp_solver}), then test the robustness of PUME across different behavioral models and cost structures (Section~\ref{sec:exp_robustness}), and finally perform sensitivity analyses against the network size, demand, and algorithm parameters (Section~\ref{sec:exp_sensitivity}). 
% We now turn to the computational performance of the framework developed in Section~\ref{sec:algorithms}.
% The experiments are organized in three parts.
% We first identify a practical solver configuration for the framework.
% We then examine whether the same computational scheme remains effective across the model classes covered by the paper.
% Finally, we study how computational performance varies with problem size, demand intensity, and MPI depth.

All test instances use the inverse of the BPR function Eq.~\eqref{eq:BPR} as the supply function and are solved with the proposed meta-algorithm.
Unless otherwise noted, we use the relative natural residual (line 14 in Algorithm~\ref{alg:meta}) as the stopping criterion with tolerance $\varepsilon = 10^{-5}$, a maximum of 2$\times 10^4$ outer iterations, and default MPI evaluation depth $m=10$.
% \begin{equation}
%     \frac{\|r_{\mathrm{nat}}(c)\|_\infty}{\max(1, \|c\|_\infty)} \le \varepsilon,
% \end{equation}
% with a maximum of $20000$ outer iterations, and the default MPI evaluation depth is $m=10$.
The detailed setting of base solver, acceleration oracle, and network construction are presented in Appendix~\ref{app:solver_params}. 
% Additional solver settings, oracle parameters, and synthetic-network construction are collected in Appendix~\ref{app:solver_params}. 
All experiments are conducted on a laptop with an Apple M4 Pro chip and 48 GB of memory. The implementation is in PyTorch.
% , \kz{with selected routines accelerated through C bindings.}

\subsection{Effectiveness of the meta-algorithm framework}\label{sec:exp_solver}

We test the meta-algorithm with and without acceleration on the Sioux Falls network (76 links, 24 nodes) with nested recursive logit (NRL) demand~\citep{mai2015nested}, a special type of recursive logit model with state-specific scale parameter $\mu_s$ (see Eq.~\eqref{eq:softmax})
% We begin with the well-known SiouxFalls network (76 links, 24 zones) under nested recursive logit (NRL) demand~\eqref{eq:logit_choice} and inverse BPR supply~\eqref{eq:BPR}.
% Table~\ref{tab:solver_comparison} reports six meta-algorithm configurations, namely aGRAAL with and without metric preconditioning, two accelerated variants that pair variable-metric aGRAAL with AA1 and NGMRES, and the Solodov-Tseng modified-projection method with and without AA1 acceleration.

\begin{table}[h]
\centering
\caption{Solver comparison on Sioux Falls\\(NRL with inverse BPR supply).}
\small
\begin{tabular}{lrrrc}
\toprule
Meta-algorithm config. & Iter & Eval & Time\,(s) & Rel.\ $r_{\mathrm{nat}}$ \\
\midrule
% aGRAAL            & 20000$^\dagger$ & 20002 & 30.4   & $2.8\!\times\!10^{-1}$ \\
aGRAAL          & 20000$^\dagger$ & 20002 & 30.4   & $2.7\!\times\!10^{-2}$ \\
aGRAAL + AA1    & \textbf{672}      & \textbf{784}     & \textbf{1.2} & \textbf{${9.9\!\times\!10^{-6}}$ }\\
aGRAAL + NGMRES & \textbf{1140}           & \textbf{1179}  & \textbf{1.8}   & \textbf{${4.9\!\times\!10^{-6}}$ }\\
ST                & 20000$^\dagger$ & 40011 & 58.9   & $1.7\!\times\!10^{-2}$ \\
ST + AA1          & 20000$^\dagger$ & 55105 & 80.5  & $2.1\!\times\!10^{-4}$ \\
ST + NGMRES  & 20000$^\dagger$ & 40346 & 59.4  & $5.0\!\times\!10^{-2}$\\
\bottomrule
\end{tabular}
\par\medskip
\footnotesize $^\dagger:$ terminate at maximum iteration without satisfying the stopping criterion. 
% terminal iteration reached without satisfying the stopping criterion. 
% \\Eval. is the total number of operator evaluations of $E(\cdot)$. 
\label{tab:solver_comparison}
\end{table}

Table~\ref{tab:solver_comparison} reports the main outcomes of six configurations. Specifically, the column ``Iter'' reports the number of outer iterations, and the column ``Eval'' refers to the number of evaluations of operator $E$. 
Although both base solvers guarantee global convergence, their overall efficiency is not satisfactory as neither of them converges within the maximum iteration number and the final gap is far from the commonly applied threshold in traffic assignment algorithms. 

When equipped with acceleration, most solvers manage to reach a much smaller gap (except for ST + NGMRES) whereas the convergence performance highly depends on the base solver and the acceleration oracle. Overall, aGRAAL outperforms ST and AA1 outperforms NGMRES, while the advantage of the base solver is dominant. The combination aGRAAL + AA1 achieves the best outcome in the tested scenarios, converging at 672 iterations and 1.2 seconds. 
The second place aGRAAL + NGMRES also converges fast at 1140 iterations and 1.8 seconds. In contrast, ST is not able to converge even with acceleration and requires more operator evaluations that largely extend the computation time. The comparison is better illustrated in Figure~\ref{fig:solver_traces} in Appendix~\ref{app:solver_params}.
These results thus demonstrate the practical importance of the meta-algorithm framework while the choice of acceleration oracle could be problem-specific.

% Table~\ref{tab:solver_comparison} shows three patterns.
% First, metric preconditioning improves the convergence performance of the aGRAAL: aGRAAL-M reduces the relative natural residual by roughly one order of magnitude with the same number of iterations.
% Second, this improvement alone is not sufficient to make the base methods practically usable on this benchmark within the prescribed iteration budget: among the six configurations, only the accelerated variants reach the stopping tolerance.
% Third, once the same variable-metric base method is embedded in the safeguarded meta-algorithm, both AA1 and NGMRES become effective, with AA1 reaching the tolerance in 672 outer iterations and 1.2 seconds, versus 1140 iterations and 1.8 seconds for NGMRES. The difference between the two oracles is therefore secondary to the larger change induced by the meta-algorithm itself. We show in Appendix Figure~\ref{fig:solver_traces} the iteration trajectories of all six configurations.

% Taken together, these results show that the practically important ingredient is the meta-algorithm framework, while the choice of acceleration oracle is benchmark-dependent. Accordingly, we carry forward the two convergent meta-algorithm configurations in the remaining experiments, using aGRAAL-M + AA1 as the default specification.

\subsection{Robustness across model classes}\label{sec:exp_robustness}

We proceed to examine whether the same solution framework remains effective across model classes covered by PUME.
To do so, we keep the algorithm parameters fixed and vary modeling components.
On the demand side, we consider $\alpha$-entmax with $\alpha=1.5$, logit, and NRL. Specifically, logit is a simplified version of recursive logit with common scale parameter $\mu_s=1, \forall s$ in Eq.~\eqref{eq:softmax}~\citet{fosgerau2013link}.
On the supply side, we consider both the potential and non-potential specifications, where the latter is generated by the asymmetric coupling function Eq.~\eqref{eq:nonpotential_supply} with $\iota = 0.1$.
We adopt aGRAAL as the base solver accelerated with AA1 or NGMRES and test on Sioux Falls, Anaheim, and Chicago~Sketch, the widely used benchmark networks at different sizes.
% The benchmark networks are SiouxFalls, Anaheim, and Chicago~Sketch.
% This gives 18 instances, and we solve each one with both AA1 and NGMRES, for a total of 36 runs.

\begin{table}[h]
\centering
\caption{Runtime performance across model classes (seconds).}
\small
\begin{tabular}{ll cc cc cc}
\toprule
& & \multicolumn{2}{c}{SiouxFalls} & \multicolumn{2}{c}{Anaheim} & \multicolumn{2}{c}{Chicago Sketch} \\
& & \multicolumn{2}{c}{\footnotesize (76 links, 552 ODs)} & \multicolumn{2}{c}{\footnotesize (914 links, 1406 ODs)} & \multicolumn{2}{c}{\footnotesize (2950 links, 149382 ODs)} \\
\cmidrule(lr){3-4} \cmidrule(lr){5-6} \cmidrule(lr){7-8}
Supply & Demand & AA1 & NGMRES & AA1 & NGMRES & AA1 & NGMRES \\
\midrule
\multirow{3}{*}{Potential}
& $\alpha$-entmax     & 1.5  & 1.6  & 4.1  & 6.4  & 101  & 62  \\
& Logit               & 1.1  & 1.9  & 3.7  & 6.2  & 92  & 62  \\
& NRL                 & 1.2  & 1.8  & 4.2  & 6.5  & 89  & 68  \\
\midrule
\multirow{3}{*}{Non-potential}
& $\alpha$-entmax     & 1.6  & 1.5  & 5.8  & 7.0  & 125  & 75  \\
& Logit               & 1.1  & 1.7  & 5.4  & 6.6  & 93  & 68  \\
& NRL                 & 1.2  & 1.8  & 4.9  & 6.6  & 97  & 76  \\
\bottomrule
\end{tabular}
\label{tab:robustness}
\end{table}

All tested scenarios converge to the prescribed gap tolerance and their runtime are reported in Table~\ref{tab:robustness}. 
% Table~\ref{tab:robustness} reports the results.
% We first observe that all 36 runs converge to the prescribed tolerance under the same set of solver parameters.
In general, the $\alpha$-entmax demand model requires more computational effort compared to the others possibly due to the existence of corner solutions, whereas non-potential supply does not systematically affect the solution efficiency. 
Interestingly, while AA1 performs better in Sioux Falls and Anaheim, NGMRES leads to consistently shorter runtime in Chicago Sketch. A closer look into the iterations reveals this phenomenon largely results from the stability of the two oracles around the equilibrium (see Figures~\ref{fig:robustness_siouxfalls}--\ref{fig:robustness_chicago} in Appendix~\ref{app:solver_params}).
% Across the tested instances, the corner-solution model $\alpha$-entmax requires effort comparable to the two interior models (logit and NRL), and moving from the potential to the non-potential specification does not systematically affect the performance.
% The relative behavior of the two acceleration oracles also changes smoothly with scale: AA1 is faster on SiouxFalls and Anaheim, whereas NGMRES is more efficient on Chicago~Sketch.

To sum up, the results above show that the same solution framework can be easily adopted to compute PUME under various demand and supply settings without the need for parameter tuning. It thus further emphasizes its practical significance in a wide range of applications. 
% These results are consistent with Sections~\ref{sec:equilibrium}--\ref{sec:algorithms}: the same computational scheme remains effective across both potential and non-potential supply settings, and across both interior and corner-solution demand models, without the need for instance-specific parameter tuning.

% Figure~\ref{fig:robustness_anaheim} shows the corresponding iteration trajectories on Anaheim under both supply types.
% The trajectories for the three demand models remain close, which is consistent with the performance patterns in Table~\ref{tab:robustness}.
% The corresponding plots for SiouxFalls and Chicago~Sketch are reported in Appendix Figures~\ref{fig:robustness_siouxfalls} and~\ref{fig:robustness_chicago}, and show similar patterns. 

\subsection{Sensitivity analyses}\label{sec:exp_sensitivity}

We finally conduct a series of sensitivity analyses using a synthetic grid network. The demand uniformly is distributed in the grid network as shown in Figure~\ref{fig:grid_illustration}, and the network size is expressed by the demand node density $k$. 
The network construction is detailed in Appendix~\ref{app:solver_params}. Throughout this section, we apply aGRAAL + AA1 as the solver and consider $\alpha$-entmax demand with $\alpha=1.5$. 
% We finally turn to the sensitivity of the computation to problem size, demand, and MPI depth.
% To this end, we use synthetic grid networks, which provide a controlled family of increasing size.
% Each grid with parameter $k$ yields a unit of $(4k\!+\!1)\times(4k\!+\!1)$ grid and zone-based gravity demand.

\begin{figure}[h]
\centering
\includegraphics[width=0.5\textwidth]{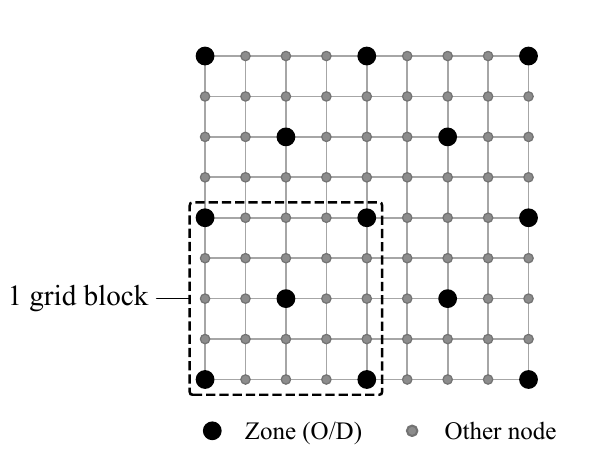}
\caption{Example of synthetic grid network ($k=2$).}
\label{fig:grid_illustration}
\end{figure}
% The detailed construction is provided in Appendix~\ref{app:solver_params}.
% Throughout this subsection, the solver is aGRAAL-M + AA1 with $\alpha$-entmax ($\alpha=1.5$) demand and inverse BPR supply.

\subsubsection{Sensitivity to network size}

We begin with a test on the scalability of the solution framework. Specifically, we increase the network size parameter $k$ while fixing the per-origin demand at $q=10^4$. As shown in Figure~\ref{fig:scalability}, the number of outer iterations until convergence is well bounded as the network size increases. Meanwhile, the total and per-iteration runtime increases linearly with problem size on the log-log scale with a slope approximately equal to 1, meaning that they share an approximate linear relationship in the normal scale. 
In other words, the results demonstrate a reasonable scalability of the solution framework

% We begin with network scalability on the grid family with $k \in \{2,4,6,\ldots,20\}$.
% This range spans a roughly 90-fold increase in problem size, while the per-origin demand is fixed at $q=10\,000$ throughout.
% The comparison is therefore intended to show how the computational burden changes as the dimension of the equilibrium problem increases dramatically.

\begin{figure}[h]
\centering
\includegraphics[width=\textwidth]{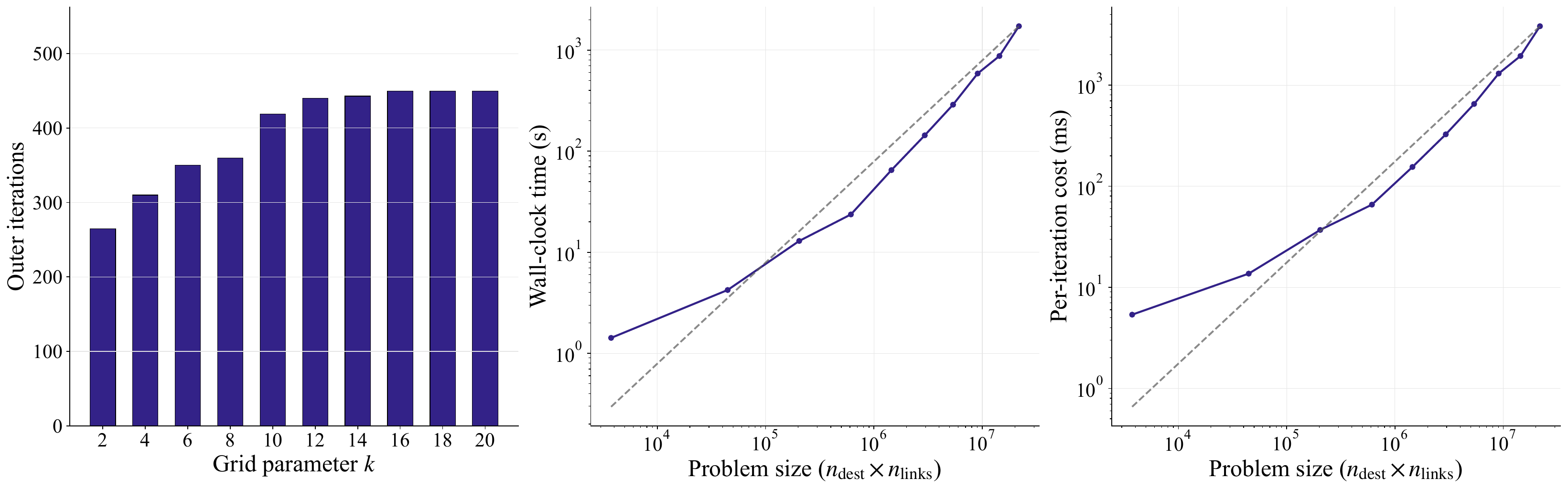}
\caption{Sensitivity results on grid networks with varying sizes.}
\label{fig:scalability}
\end{figure}

% Figure~\ref{fig:scalability} reports three patterns.
% First, the outer iteration count remains bounded over the tested network size range, varying from 265 to 450 despite the large increase in problem size.
% Second, total runtime grows approximately linearly with problem size on the log--log scale.
% Third, the per-iteration cost follows the same pattern, so the increase in total runtime comes from the higher cost of applying the equilibrium operator on larger networks.
% In other words, enlarging the network mainly raises the cost of each iteration, and the number of outer iterations remains relatively stable.
% On the other hand, the approximately linear growth of runtime with respect to problem size (number of links times number of destinations) suggests that larger instances should continue to benefit from faster linear-algebra routines and parallel hardware.

\subsubsection{Sensitivity to demand level}

We next consider three network sizes with $k \in \{5,10,15\}$ and vary the per-origin demand $q \in \{5,7.5,10,12.5,15\}\times 10^3$. To better compare the results across networks, we plot the exact number of outer iterations and the relative runtime ratio (normalized by that at demand $q=5 \times 10^3$). As shown in Figure~\ref{fig:congestion}, neither outer iterations nor runtime is sensitive to the network size, though they both increase with the demand level. 
This result is expected because the higher demand leads to a more congested network, whose equilibrium is typically much harder to solve. Nevertheless, the solution algorithm manages to converge within a reasonable number of iterations (e.g., around 600 at the highest demand level), which showcases its efficiency and scalability.

% For a fixed network, increasing demand provides a practical proxy for stronger congestion effects.

\begin{figure}[h]
\centering
\includegraphics[width=0.8\textwidth]{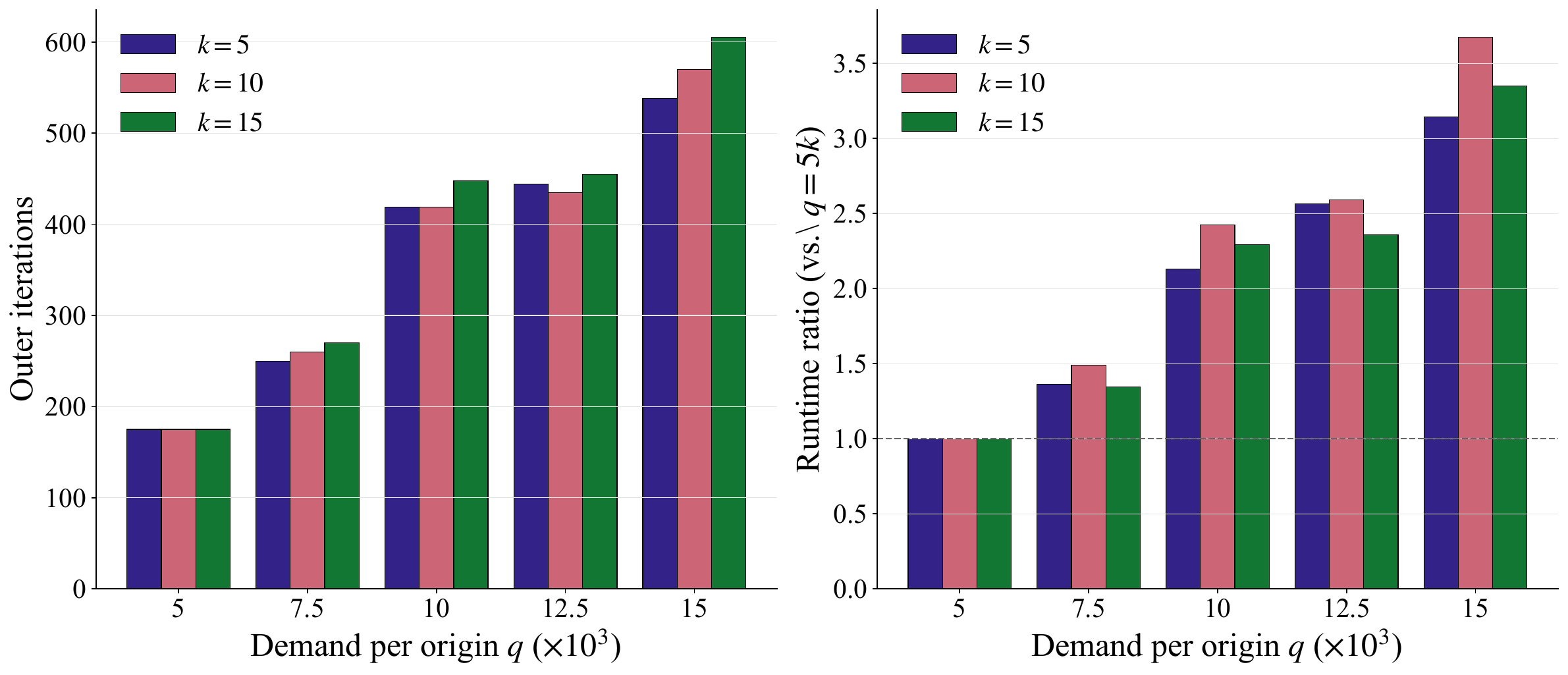}
\caption{Sensitivity results on grid networks at $k=5$, $10$, $15$ with varying demand levels.}
\label{fig:congestion}
\end{figure}

% Figure~\ref{fig:congestion} shows a clear and regular pattern across all three grid sizes.
% For a given demand level, both the outer iteration count and the runtime remain similar across different network sizes of $k=5$, $10$, and $15$.
% Across demand levels, both quantities increase as demand rises.
% When demand triples, the iteration count rises from 175 to 538 at $k=5$, from 175 to 570 at $k=10$, and from 175 to 605 at $k=15$.
% The runtime curves follow the same pattern.
% Overall, higher demand mildly increases the computational effort of the equilibrium solve, while changes in network size within this range have limited impact. This is in line with the general expectation that more congested equilibrium problems are simply harder to solve.

\subsubsection{Sensitivity to MPI evaluation depth}\label{sec:exp_sensitivity_mpi}
We finally examine the choice of MPI evaluation depth $m$. 
Theorem~\ref{thm:mpi_local} implies that a larger $m$ improves the local convergence factor $\nu$ when solving each inner problem of PUMCM. However, it also yields more computation time and may not be beneficial at the very beginning when the outer solution is far from equilibrium. 
To study this trade-off, we test on a grid of $k=5$ with $\alpha$-entmax demand with $\alpha=1.5$ and per-origin demand $q=10^4$ and vary $m \in \{1,2,5,10,20,\infty\}$. Specifically, $m=\infty$ means the policy evaluation (lines 4-6 in Algorithm~\ref{alg:mpi}) converges to the true value.

% the inner PUMCM solve from $\|P_{\pi_*}\|$ to $\|P_{\pi_*}\|^m$, but each evaluation becomes more expensive.
% To study this trade-off, we sweep $m \in \{1,2,5,10,20,\infty\}$ on a grid with $k=5$ and $\alpha$-entmax demand.

\begin{figure}[h]
\centering
\includegraphics[width=\textwidth]{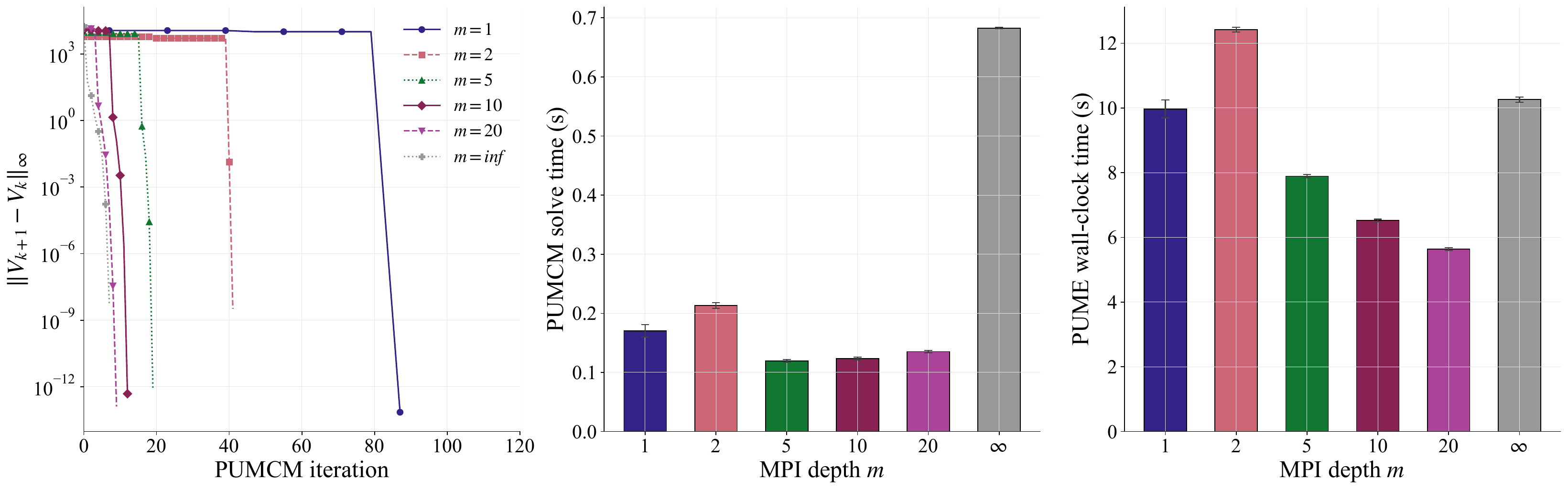}
\caption{Sensitivity results with varying MPI evaluation steps.}
% \caption{MPI evaluation depth trade-off on grid $k=5$ with $\alpha$-entmax demand.
% (a)~Inner PUMCM convergence traces.
% (b)~Per-solve wall-clock time.
% (c)~Total PUME runtime.}
\label{fig:mpi_depth}
\end{figure}

The left panel of Figure~\ref{fig:mpi_depth} plots the value gap over inner iterations. Consistent with Theorem~\ref{thm:mpi_local}, the slowest convergence rate is observed at $m=1$. Specifically, it takes 87 iterations to converge, against 19 at $m=5$ and 7 at $m = \infty$. 
% Yet, the benefit quickly diminishes as $m$ increases, and essentially flat out with $m \in [5, 20]$.
% Meanwhile, $m=\infty$ leads to a significantly longer runtime to solve each inner problem. 
Yet, $m=\infty$ leads to a significantly longer runtime for the first inner problem, as shown in the middle panel. Nevertheless, as the solution proceeds towards equilibrium, the benefit of fewer inner iterations with larger value of $m$ (faster local convergence rate) becomes dominant. Consequently, the total runtime is the shortest at $m=20$, while $m=\infty$ performs comparably well with $m=1$.
% On the other hand, as shown in the right panels, the total runtime to convergence does not show a monotone trend over $m$, even though the number of outer iterations is the same (420 iterations). 
% % Figure~\ref{fig:mpi_depth} reports three panels.
% % Panel~(a) shows how $m$ affects the inner PUMCM solve: at $m=1$ (value iteration) convergence proceeds at the slowest rate, consistent with Theorem~\ref{thm:mpi_local}, while $m=\infty$ (exact policy evaluation via LU factorization) converges in a single step per policy update.
% % The outer PUME iteration count, however, is essentially invariant to $m$ (${\approx}\,911$ iterations across all tested depths), so the runtime differences visible in panel~(c) stem entirely from the per-evaluation cost shown in panel~(b).
% % At small depths the inner solve requires many sweeps, making each ev aluation expensive in aggregate; at $m=\infty$ the LU factorization cost dominates.
% This means the overall PUME runtime is dominated by inner solve. In our example, the sweet spot lies at moderate finite depths $m \in [5,20]$, where larger $m$ cuts the average inner iterations (from 54 at $m=1$ to 12 at $m=5$, to 8 at $m=10$ and to 4.4 at $m=20$) faster than it adds per iteration computation works.

% where the convergence rate is fast enough to keep the inner solve inexpensive without the overhead of a direct factorization.  

%%%%%%%%%%%%%%%%%%%%%%%%%%%%%%%%%%%%%%%%%%%%%%%%%%%%%%%%%%%%%%%%%%%%%%
%%           SECTION 8: CONCLUSION
%%%%%%%%%%%%%%%%%%%%%%%%%%%%%%%%%%%%%%%%%%%%%%%%%%%%%%%%%%%%%%%%%%%%%%
\section{Conclusion}\label{sec:conclusion}

This paper develops the perturbed utility Markovian equilibrium (PUME) framework that consists of three key components: i) a perturbed utility Markovian choice model (PUMCM) that generalizes ARUM-based dynamic route choice models, ii) a variational inequality (VI) formulation defined on the dual cost space that accommodates asymmetric supply, and iii) globally convergent algorithms for both network loading and equilibrium computation.

On the demand side, PUMCM replaces the distributional assumption underlying ARUM-based Markovian models with a convex perturbation function that admits both interior and corner solutions.
The central theoretical basis is the well-posedness of the underlying MDP: under a mild condition of strictly negative stage surplus, any policy consistent with the Bellman equation is guaranteed to reach the destination. 
% This result ensures that the undiscounted MDP is well posed without discounting or topology-dependent restrictions.
This result leads to ideal properties of the optimal value function, the induced demand map, and the convergence guarantees of classic modified policy iteration, creating a unified theoretical and computational framework for the entire model class.

On the equilibrium side, the monotonicity inherited from PUMCM allows the equilibrium to be formulated as a cost-space VI whose existence and uniqueness require only standard conditions on the supply mapping.
The VI formulation naturally handles asymmetric congestion interactions that fall outside the potential-based optimization framework used in previous Markovian equilibrium models.
For computation, a safeguarded meta-algorithm is developed that combines monotone VI solvers with acceleration oracles, preserving global convergence while substantially improving practical speed.

The computational experiments confirm both the scalability and robustness of the proposed framework: the computation time scales linearly with the problem size and demand level, while the same solver configuration accommodates across different demand models, under both potential and non-potential supply. The analysis of the MPI depth further provides practical guidance on the trade-off between convergence speed and per-iteration cost.
% , identifying moderate evaluation depths as the sweet spot between convergence speed and per-iteration cost.

There are several directions remain open.
On the modeling side, estimating the rewards and preferences from observed choices is a natural next step. Specifically, the smoothness-sparsity trade-off characterized in this paper may inform the choice of perturbation family in estimation.
On the computational side, the meta-algorithm framework is flexible and modular, so future work can explore additional acceleration oracles.
Finally, the VI formulation opens the door to multi-class extensions and network design problems where asymmetric interactions occur.

%% ---- Bibliography ----
\Urlmuskip=0mu plus 1mu\relax
\bibliographystyle{abbrvnat}
% Robust bib path whether compiling from `tmp/` or repo root.
\IfFileExists{pume_paper.bib}{%
    \bibliography{pume_paper}%
}{%
    \bibliography{mybib}%
}

%%%%%%%%%%%%%%%%%%%%%%%%%%%%%%%%%%%%%%%%%%%%%%%%%%%%%%%%%%%%%%%%%%%%%%
%%           APPENDICES
%%%%%%%%%%%%%%%%%%%%%%%%%%%%%%%%%%%%%%%%%%%%%%%%%%%%%%%%%%%%%%%%%%%%%%
\newpage
\appendix

\section{Deferred proofs from Section~\ref{sec:PUMCM}}\label{app:deferred-sec3}

\subsection{Proof of Lemma~\ref{lem:choice_map}}\label{app:proof_lem_choice_map}

The weak duality is directly proved by rearranging Eq.~\eqref{eq:conjugate}.
Since $H_s$ is convex and continuously differentiable, for any $Q'\in \R^{|\mathcal{A}_s|}$, 
\begin{align}
    H_s(Q') \ge H_s(Q) + \nabla H_s(Q)\tran(Q' - Q),
\end{align}
which can be rearranged as 
\begin{align}
    \nabla H_s(Q)\tran Q' - H_s(Q') \le \nabla H_s(Q)\tran Q - H_s(Q).
\end{align}
The inequality can be further expanded as
\begin{align}
    \nabla H_s(Q)\tran Q' - H_s(Q') \le \sup_{Q' \in \R^{|\mathcal{A}_s|}} \bigl\{\nabla H_s(Q)\tran Q' - H_s(Q')\bigr\} \le \nabla H_s(Q)\tran Q - H_s(Q). 
\end{align}
and, when plugging in Eq.~\eqref{eq:conjugate}, it further reduces to 
\begin{align}
    H_s^*(\nabla H_s(Q)) \le \nabla H_s(Q)\tran Q - H_s(Q).
\end{align}
Rearranging the inequality and combining it with the weak duality yields, 
\begin{align}
    \max_{\pi\in\Delta_s} \left\{\pi\tran Q - H_s^*(\pi)\right\} \le H_s(Q) \le \nabla H_s(Q)\tran Q - H_s^*(\nabla H_s(Q)), 
\end{align}
which implies the results of strong duality. This completes the proof.

% \subsection{Existence and uniqueness of policy-induced value}\label{app:proof_lem_neumann}

% Let $S_N \coloneqq \sum_{k=0}^{N} P_\pi^k$ .
% Since $\rho(P_\pi) < 1$, we have $P_\pi^{N+1}\to 0$ as $N\to\infty$, and therefore the series $\sum_{k=0}^{\infty} P_\pi^k$ converges to some matrix $S$.
% Moreover,
% \[
% (\mathbb{I}-P_\pi)S_N \;=\; \mathbb{I}-P_\pi^{N+1} \;\longrightarrow\; \mathbb{I},
% \]
% so $(\mathbb{I}-P_\pi)S=\mathbb{I}$, i.e., $(\mathbb{I}-P_\pi)^{-1}=S=\sum_{k=0}^\infty P_\pi^k$.
% Since $P_\pi\ge 0$ entrywise, we have $P_\pi^k\ge 0$ for all $k$, and hence $S\ge 0$ entrywise.
% Finally, $V = U + P_\pi V$ is equivalent to $(\mathbb{I}-P_\pi)V=U$; multiplying by $(\mathbb{I}-P_\pi)^{-1}$ gives the unique solution $V=(\mathbb{I}-P_\pi)^{-1}U$.

% This appendix collects the deferred arguments from Section~\ref{sec:PUMCM}.
% We first gather the proofs behind the Bellman fixed-point and optimality results, and then prove the regularity and monotonicity statement for optimal values and action demand.
\subsection{Proof of Proposition~\ref{prop:well-posedness}}\label{app:proof_well-posedness}

The proof of Proposition~\ref{prop:well-posedness} requires an intermediate result about the existence and uniqueness of policy-induced value, which stated in the following lemma. 

\begin{lemma}[Existence and uniqueness of policy-induced value]\label{lem:neumann}
If policy $\pi$ is proper such that $\rho(P_\pi) < 1$, then the fundamental matrix $(\mathbb{I} - P_\pi)^{-1}$ exists and is nonnegative element-wise.
Consequently, for every $U \in \R^{|\mathcal{S}|}$, the linear system $V = U + P_\pi V$ has a unique finite solution $V = (\mathbb{I} - P_\pi)^{-1} U$. 
\end{lemma}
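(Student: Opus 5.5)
The plan is to use the Neumann series, as the paper hints just before Standing Assumption~\ref{asm:proper}. Let $S_N \coloneqq \sum_{k=0}^{N} P_\pi^k$. Since $\rho(P_\pi)<1$, Gelfand's formula gives $\|P_\pi^k\|^{1/k}\to\rho(P_\pi)$ for any matrix norm. Fix $r\in(\rho(P_\pi),1)$. Then $\|P_\pi^k\|\le C r^k$ for some constant $C$ and all $k$. Hence $\sum_k \|P_\pi^k\|<\infty$, the series converges absolutely, and $S_N\to S$ for some matrix $S$. In particular $P_\pi^{N+1}\to 0$.

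Next, the telescoping identity $(\mathbb{I}-P_\pi)S_N = S_N(\mathbb{I}-P_\pi) = \mathbb{I}-P_\pi^{N+1}$ holds for every $N$. Passing to the limit gives $(\mathbb{I}-P_\pi)S = S(\mathbb{I}-P_\pi)=\mathbb{I}$. So $\mathbb{I}-P_\pi$ is invertible with inverse $S=\sum_{k\ge0}P_\pi^k$.

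Nonnegativity follows from Eq.~\eqref{eq:policy_transition}. Each entry of $P_\pi$ is a sum of products of the nonnegative quantities $\pi(a|s)$ and $P(s'|s,a)$, so $P_\pi\ge0$ entrywise. Products of nonnegative matrices are nonnegative, so every $P_\pi^k\ge 0$. Partial sums of nonnegative matrices are nonnegative, and entrywise limits preserve nonnegativity, so $S\ge0$.

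Finally, $V=U+P_\pi V$ is equivalent to $(\mathbb{I}-P_\pi)V=U$. Because $\mathbb{I}-P_\pi$ is invertible, this linear system has exactly one solution, $V=(\mathbb{I}-P_\pi)^{-1}U$. That solution is finite, since it is a finite matrix applied to a finite vector. For a proper, admissible policy, $U_\pi$ is finite, so this covers the policy evaluation equation.

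There is no real obstacle here. The only point needing care is turning $\rho(P_\pi)<1$ into convergence of the series. Gelfand's formula handles this directly. Alternatively, one can note that $1$ is not an eigenvalue of $P_\pi$, so $\mathbb{I}-P_\pi$ is invertible, and then obtain convergence of the power series from the Jordan form.
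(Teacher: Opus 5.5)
Your proof is correct and follows the same Neumann-series route as the paper. Both arguments use the partial sums $S_N$, the telescoping identity $(\mathbb{I}-P_\pi)S_N=\mathbb{I}-P_\pi^{N+1}$, passage to the limit, and nonnegativity inherited from $P_\pi\ge 0$. The only difference is that you justify convergence of the series explicitly via Gelfand's formula, whereas the paper cites a standard matrix-analysis result (Horn and Johnson, Corollary 5.6.16).
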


\begin{proof}
    Consider the geometric series $S_N \coloneqq \sum_{k=0}^{N} P_\pi^k$. Since $\rho(P_\pi) < 1$, we have $P_\pi^{N+1}\to 0$ as $N\to\infty$ and thus $(\mathbb{I}-P_\pi)S_N = \mathbb{I}-P_\pi^{N+1} \to \mathbb{I}.$ 
Besides, $\rho(P_\pi) < 1$ implies $S_N$ converges to some bounded matrix $S$ as $N\to\infty$~\citep[][Corollary~5.6.16]{horn2012matrix}. Accordingly, we have $(\mathbb{I}-P_\pi)S_N \to (\mathbb{I}-P_\pi)S=\mathbb{I}$. It thus implies that $(\mathbb{I} - P_\pi)^{-1}$ exists and equals $S$, which yields the unique solution to the linear system $V = U + P_\pi V$ as $V = (\mathbb{I} - P_\pi)^{-1} U$.  
Moreover, as $P_\pi\ge 0$ element-wise, $(\mathbb{I} - P_\pi)^{-1}$ is nonnegative element-wise as well. This completes the proof. 
\end{proof}

We prove each result in Proposition~\ref{prop:well-posedness} as follows:
\begin{enumerate}[label=\textup{(\roman*)}]
\item By Lemma~\ref{lem:choice_map}\ref{FY:max}, $H_s(u(s,\cdot)) = \max_{\pi \in \Delta_s} \{\pi\tran u(s,\cdot) - H_s^*(\pi)\}$.
Hence, for any policy with $\pi(\cdot|s) \in \Delta_s, \forall s$, we have 
% Since $\pi(\cdot|s) \in \Delta_s$ is feasible, 
$U_\pi(s) = \pi(\cdot|s)\tran u(s,\cdot) - H_s^*(\pi(\cdot|s)) \le H_s(u(s,\cdot)) < 0$.
\item We prove the sufficiency and necessity separately below:

\textit{''Sufficiency''} By Lemma~\ref{lem:neumann}, if $\pi$ is proper and admissible, the linear system $V_\pi = T_\pi V_\pi = U + P_\pi V$ has a unique finite solution $V_\pi = (\mathbb{I} - P_\pi)^{-1} U_\pi$.

\textit{''Necessity''} Suppose there exists an improper policy $\pi$ that satisfies $V = T_\pi V$ for some $V\in\mathbb{R}^{|S|}$. 
Then $P_\pi$ is associated with a nonempty set of non-terminal states, denoted by $C$, that are recurrent. Accordingly, the submatrix $P_{\pi,C}$ is stochastic and associated with a stationary distribution $\nu>0$ such that $ \nu^\top  P_{\pi,C} = \nu^\top $ by the Perron--Frobenius theorem~\citep[][Theorem~8.4.4]{horn2012matrix}.
The corresponding Bellman equation is given by $V_C = U_{\pi,C} + P_{\pi,C} V_C$. Left-multiplying $\nu$ on both sides yields 
\begin{align}
    \nu\tran V_C = \nu\tran U_{\pi,C} + \nu\tran P_{\pi,C} V_C = \nu\tran U_{\pi,C} + \nu\tran  V_C \quad \Rightarrow \quad \nu\tran U_{\pi,C} = 0,
\end{align}
which contradicts Property \ref{prop:universal-strict} proved above that states $U_{\pi}<0$, hence $U_{\pi,C}<0$, element-wise. Therefore, $\pi$ must be proper. 

\item The optimality of policy $\pi_*$ implies $V_* = T_* V_* = T_{\pi_*} V_*$. Then, the properness of $\pi_*$ is directly induced by Property \ref{prop:auto-proper} proved above. 
\end{enumerate}

% \begin{proof}[Proof of Theorem~\ref{thm:auto-proper}]
% \kz{(copied from main text) 
% Suppose $\pi$ is improper; since $P_\pi$ is substochastic, this means $\rho(P_\pi)=1$.
% Then $P_\pi$ has a recurrent class $C$, such that the Markov chain, once in $C$, never leaves.
% The restriction $P_{\pi,C}$ is then stochastic and admits a stationary distribution $\nu>0$ by the Perron--Frobenius theorem.
% Restricting similarly the Bellman equation to $C$ gives $V_C = U_{\pi,C} + P_{\pi,C} V_C$ and left-multiplying by $\nu\tran$ yields $0=\nu\tran U_{\pi,C}$.
% This contradicts Lemma~\ref{lem:universal-strict}, which gives $U_{\pi,C}<0$ componentwise. Hence, such $\pi$ must be proper.}
% Suppose for contradiction that $\pi$ is improper, i.e., $\rho(P_\pi) = 1$ (substochasticity gives $\rho(P_\pi) \le 1$).
% Then $P_\pi$ has a recurrent class $C \subseteq \mathcal{S}$: a minimal nonempty subset from which the chain never leaves.
% The restricted matrix $P_{\pi,C}$ is (row-)stochastic ($P_{\pi,C}\,\mathbf{1} = \mathbf{1}$), so by the Perron--Frobenius theorem it admits a stationary distribution $\nu > 0$ with $\nu\tran P_{\pi,C} = \nu\tran$.
% Restricting $V = T_\pi V = U_\pi + P_\pi V$ to $C$ and left-multiplying by $\nu\tran$ yields
% \[
%   \nu\tran V_C = \nu\tran U_{\pi,C} + \nu\tran P_{\pi,C}\, V_C = \nu\tran U_{\pi,C} + \nu\tran V_C,
% \]
% hence $0 = \nu\tran U_{\pi,C}$.
% But Lemma~\ref{lem:universal-strict} gives $U_{\pi,C} < 0$ componentwise, and $\nu > 0$, so $\nu\tran U_{\pi,C} < 0$---a contradiction.
% \end{proof}
\subsection{Proof of Lemma~\ref{lem:Tstar_properties}}\label{app:proof_Tstar_properties}
We prove each result as follows:
\begin{enumerate}[label=\textup{(\roman*)}]
    \item $T_\pi V_1 - T_\pi V_2 = P_\pi(V_1 - V_2) \ge 0$ as $P_\pi \ge 0$.

    \item For any state $s$, let $\pi_2 \coloneqq \nabla H_s(Q_s(V_2))$, then 
    \begin{align}
        T_* V_1(s) = H_s(Q_s(V_1)) &\ge \pi_2\tran Q_s(V_1) - H_s^*(\pi_2)\\
        &=\pi_2\tran Q_s(V_2) - H_s^*(\pi_2) + \pi_2\tran (Q_s(V_1) - Q_s(V_2)) \nonumber\\
        &= H_s(Q_s(V_2)) + \pi_2\tran P(\cdot|s,\cdot)(V_1 - V_2) \ge T_* V_2(s)\nonumber
    \end{align}
    where the first inequality is due to Lemma~\ref{lem:choice_map}\ref{FY:ineq} the following equality is due to Lemma~\ref{lem:choice_map}\ref{FY:max}, and the last inequality holds when plugging the condition $V_1 \ge V_2$ into Eq.~\ref{eq:Q_def}.
    
%     Fix $s$ and set $\pi_2 \coloneqq \nabla H_s(Q_s(V_2))$.
% Then $T_* V_1(s) = H_s(Q_s(V_1)) \ge \pi_2\tran Q_s(V_1) - H_s^*(\pi_2)$ (weak duality) $= H_s(Q_s(V_2)) + \pi_2\tran [Q_s(V_1) - Q_s(V_2)] \ge T_* V_2(s)$, where the last step uses $\pi_2 \ge 0$ and $Q_s(V_1) - Q_s(V_2) = P(\cdot|s,\cdot)(V_1 - V_2) \ge 0$.

    \item For any state $s$, it holds that 
    \begin{align}
        T_* V_\pi(s) &= \max_{\pi'\in \Delta_s} \{{\pi'}\tran Q_s(V_{\pi}) - H_s^*(\pi')\} \\
        &\ge \pi(\cdot|s)\tran Q_s(V_\pi) - H_s^*(\pi(\cdot|s)) = T_\pi V_\pi(s) = V_\pi(s).\nonumber
    \end{align}
\end{enumerate}

\subsection{Proof of Proposition~\ref{prop:Vstar}}\label{app:proof_Vstar}

% We first establish the properties of Bellman operators $T_\pi$ and $T_*$ that will be used in the proof of Proposition~\ref{prop:Vstar}, summarized in the following lemma. 
% Next, we establish properties of the Bellman operators $T_\pi$ and $T_*$ that are used in the proof of Proposition~\ref{prop:Vstar}.

We prove each result as follows:
\begin{enumerate}[label=\textup{(\roman*)}]
    \item Existence: We prove the existence of $V_*$ by Brouwer's fixed point theorem,
    % ~\kz{(ref)}, 
    which requires two conditions: i) $T_*$ is continuous, and ii) $T_*$ maps from a compact, convex set to itself. 

    As per Eq.~\eqref{eq:T_star}, $T_*$ is continuous if $H_s$ and $Q_s$ are both continuous. The former holds from conditions in Assumption~\ref{asm:surplus} and the latter is due to Eq.~\eqref{eq:Q_def}. 
    To construct the feasible set of $V$ that yields a self-mapping, we consider a proper policy $\pi_0$ ensured by Assumption~\ref{asm:proper}. Then, by Lemma~\ref{lem:neumann}, we can compute the value $V_{\pi_0} = (\mathbb{I}-P_{\pi_0})^{-1} U_{\pi_0}$. Since $(\mathbb{I}-P_{\pi_0})^{-1}\ge 0$ element-wise (Lemma~\ref{lem:neumann}) and $U_{\pi_0} < \mathbf{0}$ (Proposition~\ref{prop:well-posedness}\ref{prop:universal-strict}), we have $V_{\pi_0}\leq \mathbf{0}$ and it will serve as the lower bound of feasible values given that $T_*V_{\pi_0}\geq V_{\pi_0}$ as per Lemma~\ref{lem:Tstar_properties}(iii).
    As for the upper bound, we simply consider the case $V=\mathbf{0}$. Then, we have $T_*\mathbf{0}(s) = H_s(u(s,\cdot)) < 0$ by Assumption~\ref{asm:proper}. 
    Hence, $T_*$ maps from the compact, convex set $\Omega_V=\{V\in\mathbb{R}^{|S|}|V_{\pi_0}\le V\le \mathbf{0}\}$ to itself. 

    \item Uniqueness: We prove uniqueness of $V_*$ by contradiction. Suppose $V_*$ and $\check V$ are both fixed points of $T_*$. By Lemma~\ref{lem:choice_map}\ref{FY:max}, $\check\pi = \nabla H(Q(\check V))$ is the optimal policy at $\check V$ that yields $\check V = T_{\check \pi} \check V = U_{\check\pi} + P_{\check\pi}\check V$. As per Proposition~\ref{prop:well-posedness}\ref{prop:optimal-proper}, $\check\pi$ is proper, which implies $P_{\check\pi}^N\to 0$ as $N\to\infty$ and $\check V = (\mathbb{I}-P_{\check\pi})^{-1}U_{\check\pi}$ (Lemma~\ref{lem:neumann}). 

    By the definition of $T_*$, we have $V_*=T_*V_*\ge T_{\check\pi}V_*$. Then, Lemma~\ref{lem:Tstar_properties}(i) yields
    \begin{align}
        V_* \ge T_{\check\pi}V_* \ge T^2_{\check\pi}V_*\ge \cdots \ge T^N_{\check\pi}V_* = \sum_{k=0}^{N-1} P^k_{\check\pi} U_{\check\pi} + P_{\check\pi}^N V_*. 
    \end{align}
    As $N\to \infty$, the first term converges to $(\mathbb{I}-P_{\check\pi})^{-1}U_{\check\pi}$ (see the proof of Lemma~\ref{lem:neumann}) and the second term converges to zero given the bounded $V_*\le \mathbf{0}$. Accordingly, we have $V_*\ge (\mathbb{I}-P_{\check\pi})^{-1}U_{\check\pi} = \check V$.

    Following the same reasoning, we may derive the result $V_{\check\pi}\ge V_*$ using operator $T_{\pi_*}$. Therefore, $V_* = \check V$ must hold, and thus the optimal value is unique. 

    \item Dominance:
    The existence argument used a particular proper policy $\pi_0$, but the same construction applies for any proper policy $\pi$. Accordingly, the feasible set $\Omega_V$ can be constructed using the lower envelope of $V_\pi$ over all proper policies. The uniqueness of $V_*$ carries through, which then yields $V_*\ge V_\pi$ for any proper policy $\pi$.

    Finally, since $V_*$ is unique and $\nabla H$ is single-valued (Assumption~\ref{asm:surplus}), the optimal policy $\pi_* = \nabla H(Q(V_*))$ is unique (Lemma~\ref{lem:choice_map}). Its properness follows from Proposition~\ref{prop:well-posedness}\ref{prop:optimal-proper}.
\end{enumerate}

\subsection{Proof of Proposition~\ref{prop:demand}}\label{app:proof_demand}
We prove each result as follows:
\begin{enumerate}[label=\textup{(\roman*)}]
    \item Convexity of $V_*(u)$: Consider a proper policy $\pi$ such that 
    \begin{align}
        & V_\pi(u) = T_\pi V_\pi(u) = \pi\tran u + P_\pi V_{\pi}(u) - H^*(\pi)\\
        \Rightarrow \quad & V_\pi = (\mathbb{I} -  P_\pi)^{-1}(\pi\tran u - H^*(\pi))
    \end{align}
    Hence, $V_\pi(u)$ is affine function of $u$. 
    The convexity of $V_*(u)$ then follows from the supremum of affine functions, using the result $V_*(u)=\sup_{\pi\in\Omega_\pi} V_\pi(u)$ from Proposition~\ref{prop:Vstar}, where $\Omega_\pi$ denotes the set of all proper policies. 

    \item Smoothness of $V_*(u)$, $\phi(u)$ and expression of $x_*(u)$: As per Lemma~\ref{lem:choice_map}\ref{FY:max}, the fixed point of $V_*(u)$ can be rewritten as $V_*(u) = T_* V_*(u) = H(Q(u,V_*(u)))$. 
    Consider $G(V_*, u) = V_* - H(Q(u, V_*))$, which is differentiable in both arguments by construction. We have its partial Jacobians:
    \begin{align}
        \frac{\partial G}{\partial V_*} &= \mathbb{I} - \nabla H(Q(u,V_*(u))) P = \mathbb{I}-P_{\pi_*},\label{eq:proof_demand_partial_1}\\
        \frac{\partial G}{\partial u} &= - \nabla H(Q(u,V_*(u))) = -\pi_*(u)
    \end{align}
    Here $\pi_*(u)=\nabla H(Q(u,V_*(u)))$ is identified via Proposition~\ref{prop:well-posedness}\ref{prop:optimal-proper}, with a minor abuse of dimensionality: $\pi_*(u)$ is embedded in $\R^{|\mathcal{S}|\times |\mathcal{S}||\mathcal{A}|}$ with each row nonzero only on the corresponding state-action pairs, and the dependence of $P_{\pi_*}$ on $u$ is suppressed for brevity.

    Since $\pi_*$ is proper (Proposition~\ref{prop:well-posedness}\ref{prop:optimal-proper}), $(\mathbb{I}-P_{\pi_*})$ is invertible (Lemma~\ref{lem:neumann}). Then, by the implicit function theorem, $V_*(u)$ is $C^1$ with Jacobian $\nabla V_*(u) = (\mathbb{I}-P_{\pi_*})^{-1}\pi_*(u)$, yielding $x_*(u) = [(\mathbb{I}-P_{\pi_*})^{-1}\pi_*(u)]\tran q$ as in Eq.~\eqref{eq:link_demand}.

    With the definition of $\phi(u)$, we have $x_*(u) = \nabla \phi(u)$. 
    The potential $\phi$ inherits convexity and $C^1$ regularity from $V_*(u)$, so $x_*(u)$ is continuous and monotone.

    \item Additional properties under Assumption~\ref{asm:surplus_extra}: Finally, we prove the properties of $\phi(u)$ and $\nabla x_*(u)$ under the additional Assumption~\ref{asm:surplus_extra}. Since $H_s$ is $C^2$, $G(u, V_*)$ is also $C^2$. Then, by the implicit function theorem, $V_*(u)$ is $C^2$ and thus $\phi(u)$ is $C^2$. With the convexity of $\phi$, we have $\nabla x_*(u) = \nabla^2 \phi(u)$ exists and is positive semi-definite. 
\end{enumerate}

\section{Deferred proofs from Section~\ref{sec:equilibrium}}

\subsection{Proof of Proposition~\ref{prop:monotone_demand}}\label{app:proof_monotone_demand}

Applying Proposition~\ref{prop:demand}, we first derive the class-specific potential in the cost space as 
\begin{align}
    \phi_k(c) := q_k\tran V_{k,*}(u_k(c)). 
\end{align}
Under Assumption~\ref{asm:utility}, $u_k(c)$ is a linear map. Hence, $\phi_k$ is convex and $C^1$ in $c$ as it is convex and $C^1$ in $u$. Further, its gradient is evaluated as 
\begin{align}
    \nabla \phi_k(c) = - B_k\tran x_{k,*}(u_k(c)). 
\end{align}

Now define the global potential $\Phi(c):= \sum_{k\in\mathcal{K}} \phi_k(c)$. Then, it is easy to show $\Phi$ is convex and $C^1$ in $c$ with
\begin{align}
    \nabla \Phi(c) = - \sum_{k\in\mathcal{K}} B_k\tran x_{k,*}(u_k(c)) = -x(c). 
\end{align}
The convexity of $\Phi$ also implies 
\begin{align}
    \langle \nabla \Phi(c) - \nabla \Phi(c'), c - c'\rangle \ge 0, \quad \forall c,c'\in \Omega,
\end{align}
which yields Eq.~\eqref{eq:cost_monotone}. 

Finally, under Assumption~\ref{asm:surplus_extra}, $\phi_k(u)$ is $C^2$ in $u$ as per Proposition~\ref{prop:demand}. Following the same reasoning, we have $\Phi$ is convex and $C^2$ in $c$, which further yields
$\nabla x(c) = - \nabla^2 \Phi(c) \preceq 0$.

\subsection{Proof of Theorem~\ref{thm:existence}}\label{app:proof_exist}

Consider the anchor point $\hat{c}$ introduced in Assumption~~\ref{asm:supply}\ref{asm:supply_coercive}. As per Proposition~\ref{prop:monotone_demand}, the monotone link demand satisfies
\begin{align}
    \langle x(c) - x(\hat{c}),\, c - \hat{c} \rangle \le 0,\quad \Rightarrow \quad \langle x(c),\, c - \hat{c} \rangle \le \langle x(\hat{c}),\, c - \hat{c} \rangle, \quad \forall c\in \Omega. 
\end{align}
Accordingly, for $c\neq \hat{c}$, we have
\begin{align}
    \frac{\langle E(c),\, c - \hat{c} \rangle}{\|c - \hat{c}\|}
    &= \frac{\langle z(c),\, c - \hat{c} \rangle}{\|c - \hat{c}\|}
    - \frac{\langle x(c),\, c - \hat{c} \rangle}{\|c - \hat{c}\|}\\
    &\ge \frac{\langle z(c),\, c - \hat{c} \rangle}{\|c - \hat{c}\|}
    - \frac{\langle x(\hat{c}),\, c - \hat{c} \rangle}{\|c - \hat{c}\|}
    \ge  \frac{\langle z(c),\, c - \hat{c} \rangle}{\|c - \hat{c}\|} - \|x(\hat{c})\|,\nonumber
\end{align}
where the last inequality follows Cauchy-Schwarz inequality. Due to Eq.~\eqref{eq:coercive} in Assumption~\ref{asm:supply}\ref{asm:supply_coercive}, the first term approaches to $+\infty$ as $||c||\to \infty$ while the second term remains constant. Therefore, $E$ is coercive because $\frac{\langle E(c),\, c - \hat{c} \rangle}{\|c - \hat{c}\|}\to+\infty$ as $||c||\to \infty$. The VI solution existence is then proved along with the continuity of $E$ and convex non-empty $\Omega$~\citep[see, e.g.,][Corollary~3]{Nagurney2001}. 

\subsection{Proof of Theorem~\ref{thm:uniqueness}}\label{app:proof_unique}

% \kz{
Since the supply function $z$ is strictly monotone, we have for any $c_1\neq c_2$
\begin{align}
    \langle z(c_1) - z(c_2),\, c_1 - c_2 \rangle > 0.
\end{align}
Combining this result with the monotone demand function $x$ from Proposition~\ref{prop:monotone_demand}, we have
\begin{align}
    \langle E(c_1) - E(c_2),\, c_1 - c_2 \rangle
    = \langle z(c_1) - z(c_2),\, c_1 - c_2 \rangle
    \;-\; \langle x(c_1) - x(c_2),\, c_1 - c_2 \rangle
    \;>\; 0.
\end{align}
Therefore, $E$ is strictly monotone, which implies a unique VI solution following the standard results~\citep[see, e.g.,][Theorem~12]{Nagurney2001}. 
% }
% \begin{proof}
% For any $c_1 \neq c_2$,
% \[
%     \langle E(c_1) - E(c_2),\, c_1 - c_2 \rangle
%     = \langle z(c_1) - z(c_2),\, c_1 - c_2 \rangle
%     \;-\; \langle x(c_1) - x(c_2),\, c_1 - c_2 \rangle
%     \;>\; 0.
% \]
% Hence $E$ is strictly monotone, and the VI solution is unique \citep[see, e.g.,][Theorem~12]{Nagurney2001}.
% \end{proof}
\subsection{Proof of Lemma~\ref{lem:PURC}}\label{app:proof_PURC}
For given $c\in\Omega$, we denote $x_0 = x(c)= -\nabla\Phi(c)\in\mathcal{X}$ as per Proposition~\ref{prop:monotone_demand}. The convexity of $\Phi$ implies that
\begin{align}
    & \Phi(c')\geq \Phi(c) + \langle \nabla\Phi(c), c'-c\rangle = \Phi(c) + \langle -x_0,\, c' - c \rangle,\quad \forall c'\in \Omega \\
    \Rightarrow \quad & -c\tran x_0 - \Phi(c) \geq -{c'}\tran x_0 - \Phi(c'), \quad \forall c'\in \Omega.
\end{align}
Therefore, we have $R(x_0) = \sup_{c'\in\Omega} \left\{-{c'}\tran x_0 - \Phi(c') \right\} =  -c\tran x_0 - \Phi(c)$ and 
\begin{align}
    R(x) - R(x_0)  &= \sup_{c'\in\Omega} \left\{-{c'}\tran x - \Phi(c') \right\} - \left(-c\tran x_0 - \Phi(c)\right) \\
    &\ge - c\tran x - \Phi(c) + c\tran x_0 + \Phi(c) = \langle -c,\, x - x_0 \rangle,\quad \forall x\in\mathcal{X}. \nonumber
\end{align}
This condition is the subgradient inequality for $-c \in \partial R(x_0) \Leftrightarrow 0 \in c + \partial R(x_0)$. Together with the condition that $x(c) \in \mathcal{X}$ is always feasible, it implies that $x_0$ satisfies the first-order optimality condition of perturbed best response problem \eqref{eq:PURC}.
% Hence, $x(c) \in \mathcal{X}$ solves
% perturbed best response problem \eqref{eq:PURC}. 
% Since $x_0 = x(c)\in\mathcal{X}$ (Eq.~\eqref{eq:feasible_set}), $x_0$ also minimizes it over $\mathcal{X}$, i.e., $x_0$ solves
% perturbed best response problem defined in \eqref{eq:PURC}. 

% This inequality implies that $-c$ is a subgradient of $R(x)$ at $x_0$, i.e., $-c\in\partial R(x_0)$ or equivalently, $0\in c+\partial R(x_0)$. 

% \begin{proof}
% Write $x_0 = x(c)$.
% Since $\Phi$ is convex with $\nabla\Phi(c) = -x_0$ (Theorem~\ref{prop:monotone_demand}), the first-order characterization of convexity gives $\Phi(c') \ge \Phi(c) + \langle -x_0,\, c' - c \rangle$ for all $c' \in \Omega$, that is, $-{c'}\tran x_0 - \Phi(c') \le -c\tran x_0 - \Phi(c), \forall c' \in \Omega$.
% Hence, the supremum in~\eqref{eq:Fenchel_R} is attained at $c' = c$ (not necessarily unique) and we have $R(x_0) = -c\tran x_0 - \Phi(c)$ by definition of $R$.
% For any $x$, evaluating the RHS of Eq.~\eqref{eq:Fenchel_R} at $c' = c$ gives $R(x) \ge -c\tran x - \Phi(c)$, so
% \[
%     R(x) - R(x_0) \;\ge\; \langle -c,\, x - x_0 \rangle, \qquad \forall\, x,
% \]
% which is the subgradient inequality and $-c$ is a subgradient, i.e., $-c \in \partial R(x_0)$, equivalently $0 \in c + \partial R(x_0)$.
% This is the first-order optimality condition for the unconstrained problem $\min_x\{c\tran x + R(x)\}$; since $x_0 \in \mathcal{X}$, it also minimizes over $\mathcal{X}$, i.e., $x_0 \in \arg \min_{x \in \mathcal{X}}\{c\tran x + R(x)\}$.
% \end{proof}

\subsection{Proof of Theorem~\ref{thm:VNE}}\label{app:proof_VNE}

Consider an interior PUME $c^*\in \interior(\Omega)$. The dual VI condition reduces to the market clearance $z(c^*) = x(c^*)$, which yields $c^* = z^{-1}(x(c^*)) = z^{-1}(x^*)$ as per Assumption~\ref{asm:supply_inverse}. 
By Lemma~\ref{lem:PURC}, we have $0 \in c^* + \partial R(x(c^*))  =  z^{-1}(x^*) + \partial R(x^*)$. In other words, there exists $\rho^* \in \partial R(x^*)$ such that $z^{-1}(x^*) + \rho^* = 0$. Combining with $x^* \in \mathcal{X}$ as per Eq.~\ref{eq:feasible_set}, $x^*$ is a solution to VI problem \eqref{eq:VNE}.

% \begin{proof}
% Under the boundary condition~\eqref{eq:additional_interior_cond}, Theorem~\ref{thm:existence} guarantees that the PUME $c^*$ lies in the interior of $\Omega$, so the VI reduces to market clearance: $z(c^*) = x(c^*)$. Hence, $c^* = t(x^*)$ by invertibility of $z$ (Assumption~\ref{asm:supply_inverse}).
% By Lemma~\ref{lem:PURC}, $0 \in c^* + \partial R(x(c^*)) = t(x^*) + \partial R(x^*)$.
% Hence there exists $\rho^* \in \partial R(x^*)$ with $t(x^*) + \rho^* = 0$, and~\eqref{eq:VNE} holds for all $x \in \mathcal{X}$.
% \end{proof}

\section{Deferred proofs from Section~\ref{sec:algorithms}}\label{app:deferred-sec5}

% This appendix collects the deferred arguments from Section~\ref{sec:algorithms}.
% We first record the auxiliary MPI lemmas and proofs used in the inner solver analysis, then prove the convergence theorem for the safeguarded meta-algorithm, and finally verify condition \textup{(B1)} for the implemented base methods.

% \subsection{MPI auxiliary lemmas and proofs}
\subsection{Proof of Theorem~\ref{thm:mpi_global}}\label{app:proof_mpi_global}

\begin{lemma}[Iterative improvement]\label{lem:subsolution}
Suppose the value generated by MPI with $m\ge 1$ at iteration $n$ satisfies $V_n\leq T_*V_n$, then $V_n\le T_* V_n \leq V_{n+1}\le T_*V_{n+1}$. 
% let $m \ge 1$ and $\{V_n\}$ be generated by~\eqref{eq:mpi_update}.
% If $V_n \le T_* V_n$, then $V_n \le V_{n+1}$ and $V_{n+1} \le T_* V_{n+1}$.
\end{lemma}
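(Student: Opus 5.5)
Write $\pi_n = \nabla H(Q(V_n))$ for the greedy policy at $V_n$. By Lemma~\ref{lem:choice_map}\ref{FY:max} it attains the maximum in \eqref{eq:T_star}, so $T_{\pi_n}V_n = T_*V_n$. The MPI update is $V_{n+1}=T_{\pi_n}^m V_n$. The whole argument rests on two properties of the policy operator: it is monotone (Lemma~\ref{lem:Tstar_properties}(i)), and it lies pointwise below the optimality operator, i.e.\ $T_\pi V\le T_*V$ for every admissible $\pi$ and every $V$. The latter is just the weak-duality inequality of Lemma~\ref{lem:choice_map}\ref{FY:ineq} applied state by state.

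\emph{Step 1: $V_n\le T_*V_n\le V_{n+1}$.} The first inequality is the hypothesis. For the second, note $T_{\pi_n}V_n = T_*V_n \ge V_n$. Applying the monotone operator $T_{\pi_n}$ repeatedly gives the nondecreasing chain
\[
V_n \le T_{\pi_n}V_n \le T_{\pi_n}^2 V_n \le \cdots \le T_{\pi_n}^m V_n,
\]
where each link follows by applying $T_{\pi_n}$ to the previous inequality. Since $m\ge1$, we get $T_*V_n = T_{\pi_n}V_n \le T_{\pi_n}^m V_n = V_{n+1}$.

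\emph{Step 2: $V_{n+1}\le T_*V_{n+1}$.} By the same chain, $T_{\pi_n}^{m-1}V_n \le T_{\pi_n}^m V_n = V_{n+1}$. Applying $T_{\pi_n}$ once more and using monotonicity gives $V_{n+1} = T_{\pi_n}(T_{\pi_n}^{m-1}V_n) \le T_{\pi_n}V_{n+1}$. Weak duality then yields $T_{\pi_n}V_{n+1} \le T_*V_{n+1}$, and combining the two gives $V_{n+1}\le T_*V_{n+1}$.

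The only delicate point is admissibility of $\pi_n$. The operator $T_{\pi_n}$ must be finite, and the comparison $T_{\pi_n}\le T_*$ needs $H_s^*(\pi_n(\cdot|s))<\infty$. Both hold because $\pi_n$ is a gradient of $H_s$, and the strong-duality expression in Lemma~\ref{lem:choice_map}\ref{FY:max} shows $H_s^*(\nabla H_s(Q)) = \nabla H_s(Q)\tran Q - H_s(Q)$ is finite. Nothing in the argument requires $\pi_n$ to be proper, since we work only with finite compositions of $T_{\pi_n}$.
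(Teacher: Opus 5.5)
Your proof is correct and follows essentially the same route as the paper: take the greedy policy $\pi_n$ with $T_{\pi_n}V_n = T_*V_n$, build the monotone chain $V_n \le T_{\pi_n}V_n \le \cdots \le T_{\pi_n}^m V_n = V_{n+1} \le T_{\pi_n}V_{n+1}$, and close with weak duality $T_{\pi_n}V_{n+1} \le T_*V_{n+1}$. Beyond the paper, you explicitly check that $\pi_n$ is admissible via $H_s^*(\nabla H_s(Q)) = \nabla H_s(Q)\tran Q - H_s(Q) < \infty$, and you cite monotonicity of $T_{\pi_n}$ from part (i) of the monotonicity lemma rather than part (ii).
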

\begin{proof}
Let $\pi_n = \nabla H(Q(V_n))$ be the greedy policy at iteration $n$. Then, by Lemma~\ref{lem:choice_map}, we have $T_{\pi_n} V_n = T_* V_n$. The condition $V_n \le T_* V_n$ then gives $V_n \le T_{\pi_n} V_n$.
Lemma~\ref{lem:Tstar_properties}(ii) implies $T_{\pi_n} V_n \leq T_{\pi_n}(T_*V_n) = T^2_{\pi_n} V_n$ and inductively leads to $T_{\pi_n} V_n\le T^m_{\pi_n}V_n$. Altogether, we have $V_n \le T_* V_n = T_{\pi_n} V_n\le T^m_{\pi_n} V_n = V_{n+1} \le T_{\pi_n} V_{n+1} \leq T_*V_{n+1}$, where the last inequality is due to Lemma~\ref{lem:choice_map}(i).
% The monotonicity of $T_{\pi_n}$ (Lemma~\ref{lem:Tstar_properties}) further implies $T_* V_n = T_{\pi_n} V_n\le T^m_{\pi_n} V_n = V_{n+1}$. It thus yields $V_n\le T_* V_n \leq  V_{n+1}$. The last inequality is due to the fact that $V_{n+1}\leq T_{\pi_n} V_{n+1}\leq T_*V_{n+1}$. 

% Let $\pi_n = \nabla H(Q(V_n))$ so that $T_* V_n = T_{\pi_n} V_n$ by the optimal identity (Lemma~\ref{lem:fenchel-young}).
% The condition $V_n \le T_* V_n$ then gives $V_n \le T_{\pi_n} V_n$.
% Hence, $V_n = W^{(0)} \le T_{\pi_n} V_n = W^{(1)}$. Together with monotonicity of $T_{\pi_n}$ (Lemma~\ref{lem:Tstar_properties}), a standard induction then implies that the sequence $W^{(h)} = T_{\pi_n}^h V_n$ is nondecreasing in $h$.
% In particular, $V_{n+1} = T_{\pi_n}^m V_n = W^{(m)} \ge W^{(0)} = V_n$.
% Moreover, $T_{\pi_n} V_{n+1} = T_{\pi_n}^{m+1} V_n = W^{(m+1)} \ge W^{(m)} = V_{n+1}$, hence
% \[
%     T_* V_{n+1} \ge T_{\pi_n} V_{n+1} \ge V_{n+1},
% \]
% where the first inequality holds since $T_*$ maximizes over all policies. 
\end{proof}

We prove each result in Theorem~\ref{thm:mpi_global} as follows:
\begin{enumerate}[label=\textup{(\roman*)}]
    \item We show this by induction. 
    By Lemma~\ref{lem:neumann} and Proposition~\ref{prop:well-posedness}\ref{prop:universal-strict}, the initial value of at some proper policy $\pi_0$ satisfies $V_0 \le 0$, .
    
    % Due to Lemma~\ref{lem:neumann} and Proposition~\ref{prop:well-posedness}\ref{prop:universal-strict}, the initial value $V_0 \le 0$.     
    Suppose $V_n\leq 0$, the facts that $U_{\pi_n}< 0$ (Proposition~\ref{prop:well-posedness}\ref{prop:universal-strict}) and $P_{\pi_n} \geq 0$ yields
    \begin{align}
        T_{\pi_n}V_{n} = U_{\pi_n} + P_{\pi_n}V_n \leq 0,
    \end{align}
    which implies $V_{n+1} = T^m_{\pi_n}V_{n}\leq 0$. By induction, we prove $V_n\leq 0$ for all iterates $n$. 

    As per Lemma~\ref{lem:Tstar_properties}(iii), we have $V_0 \le T_*V_0$. Combining this result with Lemma~\ref{lem:subsolution} yields the monotone value iterates $V_0\leq \dots \leq V_n\leq V_{n+1}\leq 0$.

    % Moreover, $V_0 = V_{\pi_0}$ for a proper policy $\pi_0$, Lemma~\ref{lem:Tstar_properties}(iii) gives $V_0 \le T_*V_0$ that seeds the requirement of Lemma~\ref{lem:subsolution}. Combined this result with Lemma~\ref{lem:subsolution}, we have $V_0\leq \dots \leq V_n\leq V_{n+1}\leq 0$. 
    
    The upper bound $V_*$ is also proved by induction. By construction,  we have $V_0 \le V_*$. Suppose $V_n\leq V_*$, then by Lemma~\ref{lem:Tstar_properties}(i), we have $T_{\pi_n}V_n \leq T_{\pi_n}V_*$, which further yields $V_{n+1}= T^m_{\pi_n}V_n \leq T^m_{\pi_n}V_*$. 
    Since $T_{\pi_n}V_*\leq T_*V_*=V_*$ (the former by the optimality of $T_*$ and the latter by Proposition~\ref{prop:Vstar}), Lemma~\ref{lem:Tstar_properties}(i) implies $T^2_{\pi_n}V_*\leq T_{\pi_n}V_* \leq V_*$. Inductively, we derive $V_{n+1}\leq T^m_{\pi_n}V_*\leq V_*$.
    
    % Since $T^m_{\pi_n}V_* \leq T_*V_* = V_*$ (the first inequality due to the optimality and monotonicity of $T_*$ and the second equality due to Proposition~\ref{prop:Vstar}), we have $V_{n+1}\leq V_*$.

    \item The monotone and bounded value iterates derived in (i) ensure convergence element-wise. Let $\bar{V} = \lim_{n \to \infty} V_n$, then the remaining task is to prove $\bar{V} = V_*$. 
    Since $V_0 \leq T_* V_0$,  Lemma~\ref{lem:subsolution} implies $V_n \le T_*V_n \le V_{n+1}$ for any $n$. Since $T_*$ is continuous and $V_n \to \bar V$, we have $\bar V \le T_*\bar V \le \bar V$ as $n\to\infty$ and thus $\bar{V} = T_* \bar{V}$.
    % letting $n\to\infty$ yields $\bar V \le T_*\bar V \le \bar V$, i.e., $\bar{V} = T_* \bar{V}$.
    % Since the mapping $V \mapsto \pi(V) \coloneq \nabla H(Q(V))$ is continuous, and the operator $T_\pi^m$ is continuous in $(\pi, V)$, we have 
    % \begin{align}
    %     \bar{V} = \lim_{n \to \infty} V_n = \lim_{n \to \infty} V_{n+1} = \lim_{n \to \infty} T_{\pi_n}^m V_n = T^m_{\pi(\bar{V})} \bar{V} =  T_{\pi(\bar{V})} \bar{V},
    % \end{align}
    % where the last equality is due to Lemma~\ref{lem:fixedpoints_Tpim}. Since $\pi(\bar{V})$ is the optimal policy at value $\bar{V}$, it satisfies that $T_{\pi(\bar{V})} = T_* \bar{V}$ (Lemma~\ref{lem:choice_map}). Therefore, the above equation reduces to $\bar{V} = T_* \bar{V}$, i.e., $\bar{V}$ is the fixed point of $T_*$. 
    Then, by the uniqueness of the optimal value (Proposition~\ref{prop:Vstar}), it concludes that $\bar{V}=V_*$. 
\end{enumerate}

\subsection{Proof of Theorem~\ref{thm:mpi_local}}\label{app:proof_mpi_local}

\begin{lemma}[Differentiable MPI map]\label{lem:mpi_jacobian}
The MPI map $O_m$ is differentiable at $V_*$ and $\nabla O_m(V_*) = P_{\pi_*}^m$.
\end{lemma}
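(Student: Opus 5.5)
The plan is to avoid differentiating the greedy policy $\pi(V)=\nabla H(Q(V))$. Under Standing Assumption~\ref{asm:surplus} it is only continuous, so the chain rule cannot be applied to $(\pi,V)\mapsto T_\pi^m V$. Instead I would use an envelope-type argument based on Lemma~\ref{lem:choice_map}. Throughout, write $\pi=\pi(V)$ and $\Delta V = V-V_*$. The target is
\begin{align}
O_m(V)-V_* = P_{\pi_*}^m\,\Delta V + o(\|\Delta V\|).
\end{align}

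\textbf{Step 1 (one greedy step).} Since $\pi$ is greedy at $V$, Lemma~\ref{lem:choice_map}\ref{FY:max} gives $T_\pi V = T_* V = H(Q(V))$. Here $H$ is $C^1$, and $Q$ is affine in $V$ with $\partial Q_s/\partial V = P(\cdot|s,\cdot)$. Together with $T_*V_*=V_*$ (Proposition~\ref{prop:Vstar}), this yields
\begin{align}
T_\pi V - V_* = P_{\pi_*}\Delta V + o(\|\Delta V\|).
\end{align}

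\textbf{Step 2 (affine decomposition).} I would write $O_m(V)=T_\pi^{m-1}(T_\pi V)$. For a fixed $\pi$ the map $W\mapsto T_\pi^{m-1}W$ is affine with linear part $P_\pi^{m-1}$. Hence
\begin{align}
O_m(V)-V_* = \bigl(T_\pi^{m-1}V_*-V_*\bigr) + P_\pi^{m-1}\bigl(T_\pi V - V_*\bigr).
\end{align}

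\textbf{Step 3 (envelope estimate).} This is the key step. I claim $T_\pi V_* - V_* = o(\|\Delta V\|)$ even though $\pi\neq\pi_*$. Using greediness at $V$,
\begin{align}
T_\pi V_* = \pi\tran Q(V_*) - H^*(\pi) = \bigl[\pi\tran Q(V) - H^*(\pi)\bigr] - \pi\tran P\,\Delta V = H(Q(V)) - P_\pi\Delta V.
\end{align}
By Step 1 this equals $V_* + (P_{\pi_*}-P_\pi)\Delta V + o(\|\Delta V\|)$. Continuity of $\nabla H$ and $Q$ gives $\pi(V)\to\pi_*$, so $P_\pi\to P_{\pi_*}$ and the middle term is $o(\|\Delta V\|)$. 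I expect this to be the main obstacle, and the identity above is the device that handles it.

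Iterating, $T_\pi^{j}V_*-V_* = \sum_{i<j}P_\pi^i\,(T_\pi V_*-V_*)$, which is also $o(\|\Delta V\|)$ because the entries of $P_\pi^i$ are bounded by $1$.

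\textbf{Step 4 (assemble).} Substitute Steps 1 and 3 into Step 2. Using $P_\pi^{m-1}\to P_{\pi_*}^{m-1}$, the cross term $(P_\pi^{m-1}-P_{\pi_*}^{m-1})P_{\pi_*}\Delta V$ is $o(\|\Delta V\|)$. This gives $O_m(V)-V_*=P_{\pi_*}^m\Delta V+o(\|\Delta V\|)$. Since $O_m(V_*)=V_*$, this is exactly differentiability of $O_m$ at $V_*$ with $\nabla O_m(V_*)=P_{\pi_*}^m$.
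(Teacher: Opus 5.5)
Your proof is correct. It rests on the same identity as the paper's proof, organized differently.

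The paper unrolls $T_\pi^m V$ and uses greediness to substitute $U_\pi = T_*V - P_\pi V$. This gives the residual form $O_m(V) = V - S_m(V)\,G(V)$, with $S_m(V)=\sum_{j=0}^{m-1}P_\pi^j$ and $G(V)=V-T_*V$. It then Taylor-expands the $C^1$ map $G$ at $V_*$ and freezes $S_m$ at $V_*$. The error $(S_m(V)-S_m(V_*))G(V)$ is $o(\|\Delta V\|)$ by continuity of $S_m$ together with $G(V)=O(\|\Delta V\|)$. The derivative comes out of the telescoping identity $\mathbb{I}-S_m(V_*)(\mathbb{I}-P_{\pi_*})=P_{\pi_*}^m$.

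You instead split off one sweep and isolate the envelope estimate $T_{\pi(V)}V_* - V_* = o(\|\Delta V\|)$. Your identity $T_\pi V_* = H(Q(V)) - P_\pi\Delta V$ is exactly the paper's substitution $U_\pi = T_*V - P_\pi V$ in another guise. Indeed, summing your two pieces, $\sum_{i<m-1}P_\pi^i(T_\pi V_*-V_*) + P_\pi^{m-1}(T_\pi V-V_*)$, reproduces $\Delta V - S_m(V)G(V)$. So both proofs use the same ingredients: differentiability of $H$ at $Q(V_*)$, continuity of $V\mapsto P_{\pi(V)}$, and uniform boundedness of substochastic powers.

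What your version buys is transparency. It shows explicitly why the merely continuous policy map $\pi(V)$ drops out of the derivative: a policy greedy at a nearby $V$ is first-order optimal at $V_*$, a Danskin-type effect. What the paper's residual form buys is a view of $O_m$ as a truncated Newton step on $G$, since $S_m(V)\to(\mathbb{I}-P_\pi)^{-1}$ as $m\to\infty$ recovers policy iteration. That makes the role of the depth $m$ in the rate $P_{\pi_*}^m$ immediate.
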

\begin{proof}
Let $\pi$ be the greedy policy constructed at $V$. 
Since $T_\pi V = U_\pi + P_\pi V$, we have $T_\pi^m V = (\sum_{j=0}^{m-1}P_\pi^j)U_\pi + P_\pi^m V$ by unrolling the expression.
The MPI map is expanded as
\begin{align}
    O_m(V) = T^m_\pi V &= \left(\sum_{j=0}^{m-1} P_\pi^j\right) U_\pi +  P_\pi^m V = \left(\sum_{j=0}^{m-1} P_\pi^j\right) (T_\pi V - P_\pi V) + P_\pi^m V\\
    % &= \left(\sum_{j=0}^{m-1} P_\pi^j\right) (T_\pi V - V) + \left(\sum_{j=0}^{m-1} P_\pi^j\right) (V - P_\pi V) + P_\pi^m V\nonumber\\
    &= \left(\sum_{j=0}^{m-1} P_\pi^j\right) (T_\pi V - V) + \sum_{j=0}^{m} P_\pi^j V - \sum_{j=1}^{m} P_\pi^j V \nonumber\\
    &= V - S_m(V) (V - T_\pi V), \nonumber
\end{align}
where $S_m(V) = \sum_{j=0}^{m-1} P_{\pi}^j$.

Define the gap function as $G(V) = V - T_* V = V - H(Q(V))$. Since $\pi$ is the greedy policy associated with $V$, we have $G(V) = V - T_\pi V$. Besides, the fixed-point condition of optimal value implies $G(V_*) = 0$ (Proposition~\ref{prop:Vstar}). Due to Standing Assumption~\ref{asm:surplus}, the gap function $G$ is $C^1$ and its first-order Taylor expansion at $V_*$ is written as
\begin{align}\label{eq:proof_lem_mpi_jacobian_gap}
    G(V) &= G(V_*) + \nabla G(V_*)(V-V_*) + o(||V-V_*||)= (\mathbb{I} - P_{\pi_*}) (V-V_*) + o(||V-V_*||),
\end{align}
where $\nabla G$ follows the derivation in Eq.~\ref{eq:proof_demand_partial_1}. 

Plugging the above into the convergence gap yields
\begin{align}\label{eq:proof_lem_mpi_jacobian_conv_gap}
    O_m(V) - V_* &= (V-V_*) - S_m(V)G(V) \\
    &= (V-V_*) - S_m(V_*)G(V) - (S_m(V)- S_m(V_*))G(V)\nonumber\\
    &= (V-V_*) - S_m(V_*)\left[(\mathbb{I} - P_{\pi_*})(V-V_*) + o(||V-V_*||)\right]  - (S_m(V)- S_m(V_*))G(V)\nonumber\\
    &= \left[\mathbb{I} - S_m(V_*)(\mathbb{I} - P_{\pi_*})\right](V-V_*) -  (S_m(V)- S_m(V_*))G(V) - S_m(V_*) o(||V-V_*||).\nonumber
\end{align}

% Each term in Eq.~\eqref{eq:proof_lem_mpi_jacobian_conv_gap} is further simplified as follows:

By expanding $S_m(V_*) = \sum_{j=0}^{m-1}P^j_{\pi_*}$, the first factor in Eq.~\eqref{eq:proof_lem_mpi_jacobian_conv_gap} is reduced to 
\begin{align}
    \mathbb{I} - S_m(V_*)(\mathbb{I} - P_{\pi_*}) = \mathbb{I} - (\mathbb{I} - P^m_{\pi_*}) = P^m_{\pi_*}.
\end{align}

The second term is bounded by $\|(S_m(V)- S_m(V_*))G(V)\|\leq \|S_m(V)- S_m(V_*)\|\|G(V)\|$.
% \begin{align}\label{eq:proof_lem_mpi_jacobian_conv_gap_2}
%     \frac{\|(S_m(V)- S_m(V_*))G(V)\|}{\|V-V_*\|}
%     &\leq \|S_m(V)- S_m(V_*)\|\frac{\|G(V)\|}{\|V-V_*\|}.
% \end{align}
Since $P_{\pi}$ is continuous in $V$, $S_m(V)$ is continuous at $V_*$ and therefore
$\|S_m(V)-S_m(V_*)\|\to0$ as $V\to V_*$. Since $G$ is differentiable at $V_*$ and $G(V_*)=0$,
$\|G(V)\|/\|V-V_*\|$ remains bounded in a neighborhood of $V_*$. Hence,
$(S_m(V)-S_m(V_*))G(V)=o(\|V-V_*\|)$.
    
    % We plug Eq.~\eqref{eq:proof_lem_mpi_jacobian_gap} into the second term and apply the triangle inequality:
    % \begin{align}\label{eq:proof_lem_mpi_jacobian_conv_gap_2}
    %     ||(S_m(V)- S_m(V_*))G(V)||&= ||(S_m(V)- S_m(V_*))[(\mathbb{I} - P_{\pi_*}) (V-V_*) + o(||V-V_*||)]|| \nonumber\\
    %     &\leq ||(S_m(V)- S_m(V_*))(\mathbb{I} - P_{\pi_*})|||| (V-V_*)|| \nonumber \\
    %     &\quad + ||S_m(V)- S_m(V_*)|| o(||V-V_*||).
    % \end{align}
    % The second term in Eq.~\eqref{eq:proof_lem_mpi_jacobian_conv_gap_2} directly reduces to $o(||V-V_*||)$, since $||S_m(V)- S_m(V_*)||$ is bounded in a neighborhood of $V_*$ by continuity of $S_m$ at $V_*$.
    
    % The factor in the first term is further expanded as
    % \begin{align}
    %     &S_m(V)(\mathbb{I} - P_{\pi}) - \mathbb{I} + \mathbb{I} -  S_m(V_*)(\mathbb{I} - P_{\pi_*}) + S_m(V)(P_\pi - P_{\pi_*})\\
    %     =& (P_\pi - P_{\pi_*})(S_m(V) - \mathbb{I}) = (P_\pi - P_{\pi_*})\sum_{j=1}^{m-1} P^j_\pi\nonumber
    % \end{align}
    % Then, to prove the first term is also $o(||V-V_*||)$, it suffices to show that 
    % \begin{align}
    %     \lim_{V\to V_*} (P_\pi - P_{\pi_*})\sum_{j=1}^{m-1} P^j_\pi = 0,
    % \end{align}
    % which is indeed the case because $P_\pi$ is bounded and continuous in $V$ due to the continuous mapping $\pi(V)$ and Eq.~\eqref{eq:policy_transition}.

Lastly, the fact that $S_m(V_*)$ is constant yields $S_m(V_*) o(||V-V_*||) = o(||V-V_*||)$.
Altogether, 
\begin{align}
    &O_m(V) - V_* = P^m_{\pi_*}(V-V_*) + o(||V-V_*||)\\
    \Rightarrow \quad & \nabla O_m(V_*) = \lim_{V\to V_*} \frac{O_m(V) - O_m(V_*)}{V-V_*} = \lim_{V\to V_*} \frac{O_m(V) - V_*}{V-V_*} = P^m_{\pi_*}.\nonumber
\end{align}
\end{proof}

Below, we prove Theorem~\ref{thm:mpi_local}. Since the optimal policy $\pi_*$ is proper (Proposition~\ref{prop:well-posedness}~\ref{prop:optimal-proper}), its spectral radius satisfies $\rho(P_{\pi_*}) < 1$. Hence, there exists a matrix norm with $||P_{\pi_*}||<1$\citep[Lemma~5.6.10]{horn2012matrix}, and further $||P^m_{\pi_*}||\le ||P_{\pi_*}||^m < 1$.

Plugging the result of Lemma~\ref{lem:mpi_jacobian}, the convergence gap can be rewritten as
\begin{align}
    O_m(V) - V_* = O_m(V) - O_m(V_*) &= \nabla O_m(V_*)(V-V_*) + o(||V-V_*||)\\
    &= P^m_{\pi_*}(V-V_*) + o(||V-V_*||)\nonumber
\end{align}

Taking norms on both sides and applying the triangle inequality yields
\begin{align}\label{eq:proof_mpi_local_gap_norm}
    || O_m(V) - V_*|| &\leq ||P^m_{\pi_*}(V-V_*)|| + o(||V-V_*||)\\
    &\leq (||P^m_{\pi_*}||+\varepsilon)||V-V_*||.\nonumber
\end{align}
For any $\varepsilon \in (0, 1 - ||P^m_{\pi_*}||)$, there exists neighborhood $\mathcal{N}(V_*)$ of radius $r(\varepsilon)$ such that the second inequality holds for all $V\in \mathcal{N}(V_*)$, i.e., $||V-V_*||<r(\varepsilon)$. This concludes the proof of (a) with contraction factor $\nu = ||P^m_{\pi_*}||+\varepsilon < 1$. 

By Theorem~\ref{thm:mpi_global}, the iterates $V_n\to V_*$. Then, there exists $N$ such that $V_n\in \mathcal{N}(V_*)$ for all $n\geq N$. 
Replacing $V$ and $O_m(V)$ with $V_n$ and $V_{n+1}$, respectively, 
Eq.~\ref{eq:proof_mpi_local_gap_norm} yields
\begin{align}
    ||V_{n+1} - V_*|| \le \nu ||V_n - V_*||, \quad \forall n\geq N. 
\end{align}
Since the result holds for any $\varepsilon > 0$, it implies that 
\begin{align}
    \limsup_{n\to\infty}\frac{||V_{n+1} - V_*||}{||V_n - V_*||} \leq ||P^m_{\pi_*}||. 
\end{align}

\subsection{Proof of Theorem~\ref{thm:meta}}\label{app:meta_convergence}
% \subsection{Meta-algorithm auxiliary lemmas and proofs}\label{app:meta_convergence}

We first outline the key properties of the merit function defined in~\eqref{eq:merit_def} in the following lemma, which will be applied in the proof of meta-algorithm convergence afterwards. 

% We prove that the merit function $\zeta(c) = \langle E(c),\, r_{\mathrm{nat}}(c)\rangle$ defined in~\eqref{eq:merit_def} satisfies
% three properties used throughout the convergence analysis.

% \kz{add result $\zeta(c) \ge \|r(c)\|^2 \ge 0$}

\begin{lemma}[Merit function properties]\label{lem:merit}
% Under the conditions of Theorems~\ref{thm:uniqueness}, define $r_{\mathrm{nat}}(c) = c - \mathrm{Proj}_\Omega(c - E(c))$ and $\zeta(c) = \langle E(c),\, r_{\mathrm{nat}}(c)\rangle$. Then:
Under the assumptions of Theorem~\ref{thm:uniqueness}, the merit function \eqref{eq:merit_def} satisfies the following:
\begin{enumerate}[nosep,label=\textup{(M\arabic*)}]
    \item $\zeta(c) \geq \|r(c)\|^2 \geq 0$.
    \item $\zeta(c) = 0 \;\Leftrightarrow\; c = c^*$, where $c^*$ is the unique PUME.
    \item $\zeta$ is continuous on $\Omega$.
    \item The sublevel set $\mathcal{S}_\alpha =\{c \in \Omega : \zeta(c) \le \alpha\}$ are bounded for any finite $\alpha \ge 0$.
\end{enumerate}
\end{lemma}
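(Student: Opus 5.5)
We take $p(c)\coloneqq\Proj_\Omega(c-E(c))$, so that $r(c)=c-p(c)$. The whole argument rests on the variational characterization of the projection:
\begin{equation*}
\langle (c-E(c))-p(c),\, y-p(c)\rangle\le 0,\qquad \forall\, y\in\Omega .
\end{equation*}
Since $(c-E(c))-p(c)=r(c)-E(c)$, this reads $\langle r(c)-E(c),\,y-p(c)\rangle\le 0$. The idea is to plug in two choices of $y$: first $y=c$, which gives (M1), and then $y=\hat c$, the anchor from Assumption~\ref{asm:supply}\ref{asm:supply_coercive}, which gives (M4). Properties (M2)--(M3) then follow from standard facts and earlier results.

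\textbf{(M1).} Take $y=c\in\Omega$. Because $c-p(c)=r(c)$, the inequality becomes $\langle r(c)-E(c),\,r(c)\rangle\le 0$. Rearranging gives $\zeta(c)=\langle E(c),r(c)\rangle\ge\|r(c)\|^2\ge 0$.

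\textbf{(M2).} Suppose $\zeta(c)=0$. By (M1), $r(c)=0$, so $c=\Proj_\Omega(c-E(c))$. This fixed-point condition is equivalent to $c$ solving VI~\eqref{eq:VI_PUME}, and by Theorem~\ref{thm:uniqueness} the solution is unique, so $c=c^*$. Conversely, $c^*$ solves the VI, hence $r(c^*)=0$ and $\zeta(c^*)=0$.

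\textbf{(M3).} The map $z$ is continuous by assumption, and $x$ is continuous by Proposition~\ref{prop:monotone_demand}, so $E$ is continuous. The projection onto the closed convex set $\Omega$ is nonexpansive, so $r$ is continuous. Therefore $\zeta=\langle E,r\rangle$ is continuous.

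\textbf{(M4).} This is the step I expect to be the main obstacle: $\Omega$ is unbounded, so boundedness must come from coercivity. I will use the coercivity of $E$ itself, already established in the proof of Theorem~\ref{thm:existence}:
\begin{equation*}
\frac{\langle E(c),\,c-\hat c\rangle}{\|c-\hat c\|}\to+\infty \quad\text{as } \|c\|\to\infty,\ c\in\Omega .
\end{equation*}
Fix $c\in\mathcal{S}_\alpha$; by (M1), $\|r(c)\|\le\sqrt\alpha$. Plugging $y=\hat c$ into the projection inequality gives $\langle E(c),\,p(c)-\hat c\rangle\le\langle r(c),\,p(c)-\hat c\rangle$. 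Writing $c-\hat c=(p(c)-\hat c)+r(c)$, we split
\begin{equation*}
\langle E(c),c-\hat c\rangle=\langle E(c),p(c)-\hat c\rangle+\zeta(c)\le \|r(c)\|\,\|p(c)-\hat c\|+\alpha .
\end{equation*}
Using $\|p(c)-\hat c\|\le\|c-\hat c\|+\|r(c)\|$, this is at most $\sqrt\alpha\,\|c-\hat c\|+2\alpha$. Dividing by $\|c-\hat c\|$, the coercivity ratio is bounded on $\mathcal{S}_\alpha$ by $\sqrt\alpha+2\alpha/\|c-\hat c\|$. This bound tends to $\sqrt\alpha$ as $\|c\|\to\infty$, so it is bounded once $\|c-\hat c\|$ is large. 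If $\mathcal{S}_\alpha$ contained a sequence with $\|c\|\to\infty$, the ratio along it would diverge to $+\infty$, which is a contradiction. Hence $\mathcal{S}_\alpha$ is bounded.

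The delicate point in (M4) is choosing $y=\hat c$ in the projection inequality. With that choice, the bound on the residual $\|r(c)\|\le\sqrt\alpha$ converts into a bound on the coercivity ratio that grows at most linearly in $\|c-\hat c\|$.
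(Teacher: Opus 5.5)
Your proof is correct and follows the paper's route. (M1) comes from the projection inequality at $y=c$, (M2)--(M3) from standard facts, and (M4) from plugging the coercivity anchor $\hat c$ into the projection inequality and contradicting the coercivity of $E$ shown in the existence proof. The only cosmetic difference is in (M4): you bound $\|p(c)-\hat c\|$ by the triangle inequality and get $2\alpha$, whereas the paper uses $\langle r(c),p(c)-\hat c\rangle=\langle r(c),c-\hat c\rangle-\|r(c)\|^2$, drops the last term, and gets $\alpha$; this does not affect the argument.
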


\begin{proof}
For notation simplicity, we denote $p(c) \coloneqq \mathrm{Proj}_\Omega(c - E(c))$ and thus $r(c)= c-p(c)$. In what follows, we prove the properties sequentially:
\begin{enumerate}[label=\textup{(M\arabic*)}]
    \item Since $\Omega$ is nonempty, closed, and convex, by the variational characterization of convex projection, we have 
    \begin{equation}\label{eq:proj_var}
        \langle (c - E(c)) - p(c),\; c' - p(c) \rangle \le 0 \qquad \forall\, c' \in \Omega.
    \end{equation}
    Eq.~\eqref{eq:proj_var} holds for $c'=c$, which yields
    \begin{align}
        \langle (c - E(c)) - p(c),\; c - p(c) \rangle \le 0\;\Rightarrow\;&
        \langle r(c) - E(c),\; r(c) \rangle \le 0\\
        \Rightarrow\;&
        0\leq \|r(c)\|^2 \le \langle E(c),\, r(c) \rangle = \zeta(c).\nonumber
    \end{align}

    \item ($\Rightarrow$): If $\zeta(c) = \langle E(c), r(c)\rangle = 0$, then by (M1), $\|r(c)\|^2\leq \zeta(c)= 0$ and thus $r(c)=0$. Accordingly, we have $c = p(c) = \mathrm{Proj}_\Omega(c - E(c))$ that aligns with the VI condition~\eqref{eq:VI_PUME} By the uniqueness of VI solution (Theorem~\ref{thm:uniqueness}), we have $c = c^*$. 

    ($\Leftarrow$): Since the PUME $c^*$ is the solution to the VI problem~\eqref{eq:VI_PUME}, it satisfies $c^* = \mathrm{Proj}_\Omega(c^* - E(c^*))=p(c^*)$. Hence, we have $r(c^*)=0$ that yields $\zeta(c^*)=0$. 

    \item Since the supply $z(c)$ and demand $x(c)$ are both continuous in $c$ (Assumption~\ref{asm:supply} and Proposition~\ref{prop:monotone_demand}), the excess supply $E(c) = z(c) - x(c)$ is also continuous. So as the $c-E(c)$. Since the projection on $\Omega$ is non-expansive, it is continuous. Therefore, $r$ is continuous and the inner product $\zeta$ of two continuous mappings is also continuous. 

    \item We prove this property by contradiction. Suppose there exists some $\alpha \geq 0$ such that $\mathcal{S}_\alpha$ is not bounded, i.e., there exist some $c^\infty$ such that $||c^\infty||\to\infty$ while $\zeta(c^\infty)\leq \alpha$. By (M1), we have $||r(c^\infty)||^2\leq \zeta(c^\infty)\leq \alpha$ and thus $||r(c^\infty)||\leq \sqrt{\alpha}$. 

    Let $\hat{c}\in \Omega$ be the anchor point of the coercivity condition~\eqref{eq:coercive}. 
    Since $\|c^\infty\|\to\infty$, the distance $\|c^\infty-\hat{c}\|\to\infty$.
    % Meanwhile, one can always find a bounded cost $\hat{c}\in \Omega$ such that the distance $||c^\infty-\hat{c}||\to\infty$. Let $\hat{c}$ be the anchor point in Eq.~\eqref{eq:coercive} (coercive supply function) and rewrite it with respect to $c^\infty$:
    % \begin{align}
    %     \frac{\langle z(c^\infty), c^\infty- \hat{c}\rangle}{||c^\infty - \hat{c}||} \to \infty
    % \end{align}
    Plugging $c=c^\infty$ and $c'=\hat{c}$ into Eq.~\eqref{eq:proj_var}, we have 
    \begin{align}\label{eq:proj_var_M4}
        &\langle (c^\infty - E(c^\infty)) - p(c^\infty),\; \hat{c} - p(c^\infty) \rangle \le 0\\
        \Rightarrow\; & \langle r(c^\infty) - E(c^\infty), \hat{c} + r(c^\infty) - c^\infty\rangle \leq 0 \nonumber\\
        \Rightarrow\; & ||r(c^\infty)||^2 - \langle r(c^\infty), c^\infty - \hat{c}\rangle + \langle E(c^\infty), c^\infty - \hat{c}\rangle - \langle E(c^\infty), r(c^\infty) \rangle \leq 0\nonumber\\
        \Rightarrow\; & \langle E(c^\infty), c^\infty - \hat{c}\rangle \leq \zeta(c^\infty) + \langle r(c^\infty), c^\infty - \hat{c}\rangle - ||r(c^\infty)||^2.\nonumber
    \end{align}

    By dropping the non-negative term $||r(c^\infty)||^2$ and applying the Cauchy-Schwarz inequality, Eq.~\eqref{eq:proj_var_M4} is reduced to 
    \begin{align}
        &\langle E(c^\infty), c^\infty - \hat{c}\rangle \leq \zeta(c^\infty) + ||r(c^\infty)|| ||c^\infty - \hat{c}||\\
        \Rightarrow\; & \frac{\langle E(c^\infty), c^\infty - \hat{c}\rangle}{||c^\infty - \hat{c}||}\leq \frac{\zeta(c^\infty)}{||c^\infty - \hat{c}||} + ||r(c^\infty)|| \leq \frac{\alpha}{||c^\infty - \hat{c}||} + \sqrt{\alpha} \to \sqrt{\alpha}. \nonumber
    \end{align}
    By the coercivity of the excess supply $E$ established in the proof of Theorem~\ref{thm:existence} (Appendix~\ref{app:proof_exist}), 
    \begin{align}
        \frac{\langle E(c^\infty),\, c^\infty - \hat{c}\rangle}{\|c^\infty - \hat{c}\|} \to \infty
    \end{align}
 as $\|c^\infty\|\to\infty$. This contradicts the upper bound $\sqrt{\alpha}$ derived above, so no such unbounded $c^\infty$ exists and every sublevel set $\mathcal{S}_\alpha$ is bounded.
\end{enumerate}
\end{proof}

Following (M4) in Lemma~\ref{lem:merit}, we define the sublevel radius as follow:
% It follows immediately from the properties of the merit function, we can define the sublevel radius as follow:
\begin{definition}[Sublevel radius]
For $\alpha \ge 0$, the sublevel radius of a merit function $\zeta$ is defined as 
\begin{align}
    \mathcal{R}(\alpha) \coloneqq \sup\{\|c - c^*\|_M: c \in \Omega,\; \zeta(c) \le \alpha\}.
\end{align}
% with the convention $\mathcal{R}(0) = 0$ (since $\zeta(c) = 0 \Leftrightarrow c = c^*$).
\end{definition}
As per Lemma~\ref{lem:merit}(M4), the sublevel set $\mathcal{S}_\alpha$ is bounded for any $\alpha\geq 0$, thus $\mathcal{R}(\alpha)<\infty$. Besides, $\mathcal{R}$ is non-decreasing in $\alpha$ by construction and $\mathcal{R}(\alpha)\to 0$ as $\alpha\to0^+$ due to the unique limiting point of $c^*$ (Lemma~\ref{lem:merit}(M2)).

Below, we prove the convergence results presented in Theorem~\ref{thm:meta} in order:
\begin{enumerate}[label=\textup{(\roman*)}]
    \item Merit convergence: Let $\mathcal{N}_{ACC} = \{n: c_{n+1} = c_{n+1}^{ACC}\}$ denote the set of iterations at which an accelerated solution is accepted. Due to the restart mechanism with period $R$, the complementary set $\mathcal{N}_B = \{n: c_{n+1} = c_{n+1}^B\}$ is infinite. Then, exactly one of the two following scenarios occurs:
    \begin{enumerate}
        \item $|\mathcal{N}_{ACC}|<\infty$: In this case, there exists a finite number $N$, in any iteration $n \geq N$, the accelerated candidate is rejected. Thus, the algorithm falls back to the base method $c_{n+1}=B(c_n)$ and Condition (B2) ensures the convergence to the VI solution, which implies $\zeta(c_n)\to 0$ as $n\to \infty$. 

        \item $|\mathcal{N}_{ACC}|=\infty$: Enumerate the iteration indices in $\mathcal{N}_{ACC}$ in increasing order as $n_0<n_1<\dots$. We denote the iteration interval $[n_j+1, n_{j+1}]$ as the $(j+1)$-th epoch, 
        whose first iterate $c_{n_j+1}$ is the accepted accelerated solution and the remaining iterates $c_{n_j+2},\dots,c_{n_{j+1}}$ are base solutions.
        % within which all iterates are base solutions except for the last one. 

        At each accelerated step $n_j$, the safeguard checking ensures that $\zeta(c_{n_j+1})\leq \eta\rho_{n_j}< \tau \rho_{n_j}$ as $\eta <\tau$. Accordingly, the reference is updated as 
        \begin{align}
            \rho_{n_j+1} = \min\{\rho_{n_j}, \max\{\tau\rho_{n_j}, \zeta(c_{n_j+1})\}\} = \min\{\rho_{n_j},\tau\rho_{n_j}\} = \tau\rho_{n_j}. 
        \end{align}
        Since the reference update is non-increasing, we have $\rho_{n_{j+1}}\leq \rho_{n_j+1} = \tau \rho_{n_j}$. It is then easy to prove by induction that $\rho_{n_j}\leq \tau^j\rho_0$. Consequently, the merit at each accelerated step satisfies
        \begin{align}
            \zeta(c_{n_j+1})\leq \eta \rho_{n_j} \leq \eta \tau^j \rho_0 < \tau^{j+1} \rho_0 \to 0 \text{ as } j\to \infty. 
        \end{align}

        It remains to show the full sequence of merits converges, i.e., $\zeta(c_n)\to 0$ as $n\to\infty$. 
        Since $\zeta(c_{n_j+1})\le\tau^{j+1}\rho_0$, $c_{n_j+1}$ is inside the sublevel set $S_\alpha$ with $\alpha=\tau^{j+1}\rho_0$, and equivalently, $\normM{c_{n_j+1}-c^*} \leq \mathcal{R}(\tau^{j+1}\rho_0)$. Therefore, Condition (B1) with $c_0 = c_{n_j+1}$ implies that, for any iterate in the $(j+1)$-th epoch $n=n_j+1, \dots, n_{j+1}$, 
        $ \normM{c_n-c^*}\le C\,\normM{c_{n_j+1}-c^*} \leq C\mathcal{R}(\tau^{j+1}\rho_0)$.
        % Consider any iterate in the $(j+1)$-th epoch $n=n_j+1, \dots, n_{j+1}$, Condition (B1) with $c_0 = c_{n_j+1}$ implies that for $C \geq 1$
        % \begin{align}
        %     \zeta(c_n) = \zeta(c^B_n) \leq \sigma(\zeta(c_{n_j+1})) \leq \sigma(\tau^{j+1}\rho). 
        % \end{align}
        % \begin{align}
        %     \normM{c_n-c^*}\le C\,\normM{c_{n_j+1}-c^*} \leq C\mathcal{R}(\tau^{j+1}\rho_0).
        % \end{align}
        % The last inequality holds because the results $\zeta(c_{n_j+1})\le\tau^{j+1}\rho_0$ places $c_{n_j+1}$ in the sublevel set $\mathcal{S}_\alpha$ with $\alpha=\tau^{j+1}\rho_0$. Hence, $\normM{c_{n_j+1}-c^*}\le\mathcal{R}(\tau^{j+1}\rho_0)$. 
        % Since $\sigma(\alpha)\to 0$ as $\alpha\to 0^+$, the merit of base solutions also converges to zero as $j\to \infty$.
        Since there are infinitely many acceptances and $\mathcal{R}(\alpha) \rightarrow 0$ as $\alpha \rightarrow 0^+$, 
        $\mathcal{R}(\tau^{j+1}\rho_0)\to0$ as $j \rightarrow \infty$ and thus 
        $\normM{c_n-c^*}\to0$. Finally, 
        by the continuity of $\zeta$ and $\zeta(c^*)=0$ (Lemma~\ref{lem:merit}(M2) and (M3)), $\zeta(c_n)\to 0$ for the full sequence.
    \end{enumerate}

    \item Solution boundedness: Given the merit convergence, there must exist some $N$ such that $\zeta(c_n)\leq \zeta(c_N)$ for all $n$. Then, the boundedness of $\{c_n\}$ is induced from Property (M4) with $\alpha = \zeta(c_N)$

    \item Equilibrium convergence: Since the sequence $\{c_n\}$ is bounded with limiting point $\bar{c}$, $\zeta$ is continuous (Lemma~\ref{lem:merit}(M3)), and
    $\zeta(c_n)\to 0$ as $n\to\infty$ (merit convergence),
    we have $\zeta(\bar c) = 0\Leftrightarrow \bar c = c^*$ is the unique PUME due to Property (M2) in Lemma~\ref{lem:merit}
    % , the limiting point of the bounded sequence $\{c_n\}$ is the unique PUME $c^*$
    . 
\end{enumerate}

\subsection{Verification of meta-algorithm convergence conditions}\label{app:B1_proof}
% \subsection{Verification of Condition \textup{(B1)}}\label{app:B1_proof}

Since the global convergence (B2) holds for both ST and aGRAAL under the conditions of Theorem~\ref{thm:uniqueness} \citep{solodov1996modified,malitsky2020forward}, we only need to verify (B1) in Theorem~\ref{thm:meta}. 

\begin{corollary}[Verification of ST method]
    The ST method with update rules Eqs.~\eqref{eq:ST_fst}--\eqref{eq:ST_lst} satisfies (B1) in Theorem~\ref{thm:meta}. 
\end{corollary}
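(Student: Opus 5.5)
The plan is to establish (B1) with the Euclidean metric $M=\mathbb{I}$ and $C=1$, by proving the standard Fejér monotonicity of the ST iterates with respect to the unique PUME $c^*$. Concretely, I would show that for every $n$
\begin{align}
    \|c_{n+1}-c^*\|^2 \le \|c_n-c^*\|^2 - \theta(2-\theta)(1-\delta)^2\frac{\|c_n-\hat c_n\|^4}{\|d_n\|^2},
\end{align}
where $d_n\coloneqq (c_n-\hat c_n)-\lambda_n E(c_n)+\lambda_n E(\hat c_n)$. Iterating this bound from any starting point $c_0\in\Omega$ gives $\|c_n^B-c^*\|\le\|c_0-c^*\|$ for all $n$. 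This is exactly (B1) with $C=1$.

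\emph{Step 1: a lower bound on $\langle d_n, c_n-c^*\rangle$.} I would split this inner product as
\begin{align}
    \langle d_n,c_n-c^*\rangle=\langle d_n,c_n-\hat c_n\rangle+\langle d_n,\hat c_n-c^*\rangle .
\end{align}
For the first term, the Armijo acceptance rule gives $\lambda_n\langle E(c_n)-E(\hat c_n),c_n-\hat c_n\rangle\le\delta\|c_n-\hat c_n\|^2$. Hence $\langle d_n,c_n-\hat c_n\rangle\ge(1-\delta)\|c_n-\hat c_n\|^2$.

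For the second term, I would write $d_n=[c_n-\lambda_nE(c_n)-\hat c_n]+\lambda_nE(\hat c_n)$. The variational characterization of the projection (Eq.~\eqref{eq:proj_var} applied to $\hat c_n=\Proj_\Omega(c_n-\lambda_nE(c_n))$ with $c'=c^*$) gives $\langle c_n-\lambda_nE(c_n)-\hat c_n,\hat c_n-c^*\rangle\ge0$.

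The remaining piece is $\lambda_n\langle E(\hat c_n),\hat c_n-c^*\rangle$. By monotonicity of $E$ (Assumption~\ref{asm:supply} together with Proposition~\ref{prop:monotone_demand}, as in the proof of Theorem~\ref{thm:uniqueness}), it is at least $\lambda_n\langle E(c^*),\hat c_n-c^*\rangle$. This is $\ge0$ by the VI~\eqref{eq:VI_PUME}, since $\hat c_n\in\Omega$. Together, $\langle d_n,c_n-c^*\rangle\ge(1-\delta)\|c_n-\hat c_n\|^2$.

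\emph{Step 2: the projected correction step.} Since $c^*\in\Omega$ and projection is nonexpansive,
\begin{align}
    \|c_{n+1}-c^*\|^2\le\|c_n-\gamma_nd_n-c^*\|^2=\|c_n-c^*\|^2-2\gamma_n\langle d_n,c_n-c^*\rangle+\gamma_n^2\|d_n\|^2 .
\end{align}
I would insert the bound from Step 1 and the definition \eqref{eq:ST_lst} of $\gamma_n$, namely $\gamma_n\|d_n\|^2=\theta(1-\delta)\|c_n-\hat c_n\|^2$. The right-hand side then becomes $\|c_n-c^*\|^2-(2-\theta)\gamma_n(1-\delta)\|c_n-\hat c_n\|^2$, which is at most $\|c_n-c^*\|^2$ because $\theta\in(0,2)$.

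The degenerate case $c_n=\hat c_n$ needs a separate remark. There $r(c_n)=0$ (using $\lambda_n>0$), so $c_n=c^*$ and the iteration is stationary. The bound then holds trivially.

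\emph{Main obstacle.} The step I expect to need the most care is the well-definedness of the line search, i.e.\ that a $\lambda_n>0$ satisfying the Armijo condition exists. I would justify it by continuity of $E$ (Lemma~\ref{lem:merit}(M3)): as $\lambda\to0$, the ratio of $\langle E(c_n)-E(\hat c_n),c_n-\hat c_n\rangle$ to $\|c_n-\hat c_n\|^2/\lambda$ tends to zero, as in \citet[Lemma~3.1]{solodov1996modified}. I would cite that lemma rather than reprove it.

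Finally, I would note that (B2) is already supplied by \citet[Theorem~3.2]{solodov1996modified} together with Lemma~\ref{lem:merit}(M3) and (M2). With that, the corollary follows.
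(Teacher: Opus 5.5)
Your proposal is correct and follows the same route as the paper. Both arguments establish Fejér monotonicity $\|c_{n+1}^B-c^*\|\le\|c_n^B-c^*\|$ with respect to the unique PUME and conclude (B1) with $M=\mathbb{I}$ and $C=1$. The only difference is that the paper cites this inequality from \citet[Theorem~3.2, Eq.~(3.11)]{solodov1996modified}, whereas you derive it from the Armijo rule, the projection characterization, monotonicity of $E$, and the definition of $\gamma_n$, which makes your argument self-contained.
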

\begin{proof}
    With continuous monotone operator $E$ and closed convex set $\Omega$, the ST method implemented in Eqs.~\eqref{eq:ST_fst}--\eqref{eq:ST_lst} is a special variant of Algorithm~3.2 of \citet{solodov1996modified} with preconditioning matrix $\mathbb{I}$. The proof of the ST method \citep[Theorem~3.2, Eq.~(3.11)]{solodov1996modified} gives
    \begin{align}
        \|c_{n+1}^{B} - c^*\|^2
        \le
        \|c_n^{B} - c^*\|^2
        -
        \theta(2-\theta)(1 - \delta)^2
        \frac{\|c_n-\hat c_n\|^4}
             {\|(c_n-\hat c_n)-\beta_n E(c_n)+\beta_n E(\hat c_n)\|^2}.
    \end{align}
    Since the second term is always non-negative, we have $\|c_{n+1}^{B} - c^*\|^2\leq \|c_n^{B} - c^*\|^2,\forall n$. It then yields $\|c_{n+1}^{B} - c^*\|\leq \|c_0 - c^*\|$, i.e., Condition (B1) holds with $C=1$.
\end{proof}

% \begin{corollary}[Verification of aGRAAL with fixed preconditioning matrix]
%     The aGRAAL method with update rules Eqs.~\eqref{eq:aGRAAL_fst}--\eqref{eq:aGRAAL_lst} using a fixed matrix $M$ over acceleration steps satisfies (B1) in Theorem~\ref{thm:meta}. 
% \end{corollary}
% \begin{proof}
%     This corollary is proved via an intermediate result. Specifically, Lemma~\ref{lem:distance_bound} establishes a uniform distance bound, with which Proposition~\ref{prop:B1} can be moderately adjusted to show aGRAAL satisfies (B1). For ease of reading, we only present the lemma below while deferring its proofs to Section~\ref{sec:distance_bound_proof}. 
% \end{proof}
\begin{corollary}[Verification of aGRAAL with fixed preconditioning matrix]\label{lem:distance_bound}
    % For any initial solution $c_0\in\Omega$, let $c_1=c_0$ and $\varphi$ be the golden ratio. Then, the aGRAAL method generates iterates satisfying
    % \begin{align}
    %     ||c^B_n - c^*||\leq C_\varphi \sqrt{\text{cond}(M)}||c_0-c^*||, 
    % \end{align}
    % where $\text{cond}(M)=M_{\max}/M_{\min}$ is the condition number ($M_{\max},M_{\min}$ are the maximum and minimum value in $M$, respectively), and $C_\varphi=(\varphi+1)/(\varphi-1)$. 
    The aGRAAL method with update rules Eqs.~\eqref{eq:aGRAAL_fst}--\eqref{eq:aGRAAL_lst} using a fixed matrix $M$ over acceleration steps satisfies (B1) in Theorem~\ref{thm:meta}.
\end{corollary}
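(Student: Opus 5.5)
We aim to show that the aGRAAL iterates satisfy a bound of the form $\normM{c^B_n - c^*} \le C\,\normM{c_0 - c^*}$ for all $n$, with a constant $C$ depending only on $\varphi$. The natural route is the Lyapunov (energy) function from the convergence analysis of \citet{malitsky2020golden}, specialised to the metric $\normM{\cdot}$. Since $M$ is held fixed, the preconditioned step $\hat c_n - \lambda_n M^{-1}E(c_n)$ followed by the $M$-projection is exactly aGRAAL applied in the Hilbert space $(\R^L,\langle\cdot,\cdot\rangle_M)$ to the operator $M^{-1}E$. This operator is monotone in the $M$-inner product, because $\langle M^{-1}E(c)-M^{-1}E(c'),c-c'\rangle_M=\langle E(c)-E(c'),c-c'\rangle\ge 0$. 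So every inequality in that paper transfers verbatim with $\|\cdot\|$ replaced by $\normM{\cdot}$ and $\|\cdot\|_{M^{-1}}$ appearing on operator differences, which matches the step-size rule \eqref{eq:aGRAAL_fst}.

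\textbf{Step 1: the energy inequality.} We recall the key energy inequality from the proof of \citet[Theorem~2]{malitsky2020golden}, taken at the unique solution $c^*$ (Theorem~\ref{thm:uniqueness}). In our notation it reads, for all $n\ge1$,
\begin{align*}
(\varphi+1)\normM{c_{n+1}-c^*}^2 + \tfrac{\theta_n}{2}\normM{c_{n+1}-c_n}^2 - \varphi\normM{\hat c_{n+1}-c^*}^2 \le (\varphi+1)\normM{c_{n}-c^*}^2 + \tfrac{\theta_{n-1}}{2}\normM{c_{n}-c_{n-1}}^2 - \varphi\normM{\hat c_{n}-c^*}^2 .
\end{align*}
Its derivation uses the monotonicity term $\lambda_n\langle E(c_n),c_n-c^*\rangle\ge\lambda_n\langle E(c^*),c_n-c^*\rangle\ge0$ and the step-size condition to absorb the cross term $\lambda_n\langle E(c_n)-E(c_{n-1}),\cdot\rangle$.

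The right-hand quantity $\mathcal{E}_n$ is non-increasing. We must also check that it is bounded below by a multiple of the distance. From \eqref{eq:aGRAAL_lst}, $\hat c_{n+1}-c^* = \tfrac{\varphi-1}{\varphi}(c_{n+1}-c^*)+\tfrac1\varphi(\hat c_n-c^*)$. Using the identity $\varphi^2=\varphi+1$ together with convexity of the squared norm, this gives the standard fact that $\mathcal{E}_n$ dominates a positive multiple of $\normM{c_{n}-c^*}^2 + \normM{\hat c_n - c^*}^2$.

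\textbf{Step 2: initialization and chaining.} With the standard start $c_1 = c_0$ and $\hat c_0 = c_0$, the initial energy involves only $\normM{c_0-c^*}^2$, with coefficient $(\varphi+1)-\varphi=1$ (the difference term vanishes). Chaining the monotonicity then gives $\kappa\normM{c_n - c^*}^2\le \mathcal{E}_n\le \mathcal{E}_1 = \normM{c_0-c^*}^2$, which yields (B1) with $C=\kappa^{-1/2}\ge 1$.

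\textbf{Main obstacle.} The main difficulty is the lower bound $\mathcal{E}_n\ge\kappa\normM{c_n-c^*}^2$ with an explicit $\kappa>0$, because $\mathcal{E}_n$ contains the negative term $-\varphi\normM{\hat c_n-c^*}^2$. I would first try to eliminate $\hat c_{n+1}$ via the convex-combination identity, which reorganizes the energy at step $n+1$ as
\[
\tfrac{\varphi+1}{\varphi}\normM{\hat c_{n+1}-c^*}^2 + \tfrac{\varphi^2-1}{\varphi}\normM{c_{n+1}-\hat c_n}^2\text{-type terms}.
\]
If the constant is not clean, a fallback is to bound $\normM{\hat c_n-c^*}$ recursively by $\max$ over past $\normM{c_k-c^*}$, exploiting the fact that $\hat c_n$ is an average of past iterates. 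This yields a constant $C_\varphi=(\varphi+1)/(\varphi-1)$. In either case (B1) holds with a finite constant. Since $M$ is fixed within each epoch, no condition-number factor appears.
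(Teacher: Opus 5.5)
Your overall plan (Malitsky's Lyapunov argument in the $M$-geometry, the start $c_1=\hat c_0=c_0$, then chaining) is the right one, and your remark that $M^{-1}E$ is monotone in $\langle\cdot,\cdot\rangle_M$ is fine. However, the energy you write down is not the one from the aGRAAL analysis, and the argument breaks exactly at the step you flag. Your quantity
\[
\mathcal{E}_n=(\varphi+1)\normM{c_n-c^*}^2+\tfrac{\theta_{n-1}}{2}\normM{c_n-c_{n-1}}^2-\varphi\normM{\hat c_n-c^*}^2
\]
is sign-indefinite and does not decrease along aGRAAL. Take $L=1$, $M=1$, $\Omega=\R_+$, $E(c)=c-c^*$ with $c^*>0$, $\bar\lambda=1$, $\lambda_0=0.8$, and $c_0=c_1=\hat c_0=c^*+1$. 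Reading the $0/0$ ratio at $n=1$ as $+\infty$, the step rule gives $\lambda_1=\lambda_2=0.8$ and $\theta_1=\theta_2=\varphi$. One then computes $\mathcal{E}_2\approx-0.16<\mathcal{E}_3\approx 0.19$.

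So both the claimed one-step inequality and the ``standard fact'' $\mathcal{E}_n\ge\kappa\normM{c_n-c^*}^2$ fail, and $C=\kappa^{-1/2}$ is not available. Your fallback does not repair this. Bounding $\normM{\hat c_n-c^*}$ by past $\normM{c_k-c^*}$ runs in the wrong direction, because the $c_k$ are exactly what is not yet controlled. The constant $(\varphi+1)/(\varphi-1)$ you quote is therefore asserted rather than derived.

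The missing idea is that the nonnegative Lyapunov function controls the averaged sequence $\hat c_n$, not $c_n$. The quantity from \citet{malitsky2020golden} is
\[
W_n=\tfrac{\varphi}{\varphi-1}\normM{\hat c_n-c^*}^2+\tfrac{\theta_{n-1}}{2}\normM{c_n-c_{n-1}}^2 ,
\]
which is non-increasing. With $c_1=\hat c_0=c_0$ you get $\hat c_1=c_0$ and $W_1=\tfrac{\varphi}{\varphi-1}\normM{c_0-c^*}^2$, hence $\normM{\hat c_n-c^*}\le\normM{c_0-c^*}$ for all $n$. Next, invert the averaging step \eqref{eq:aGRAAL_lst} to get $c_n=\tfrac{\varphi}{\varphi-1}\hat c_n-\tfrac{1}{\varphi-1}\hat c_{n-1}$. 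The triangle inequality then gives $\normM{c_n-c^*}\le\tfrac{\varphi+1}{\varphi-1}\normM{c_0-c^*}$, which is (B1) with $C_\varphi=(\varphi+1)/(\varphi-1)$. This is the paper's argument, and it never needs a lower bound on an energy in terms of $\normM{c_n-c^*}$.
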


\begin{proof}
     Since $E$ is monotone,
     % and \kz{let feasible set enters through indicator functions}, 
    aGRAAL~\eqref{eq:aGRAAL_fst}--\eqref{eq:aGRAAL_lst} coincides with the metric golden-ratio method of \citet[Algorithm~2]{malitsky2020golden}. Its convergence analysis \citep[Eq.~(70)]{malitsky2020golden} shows that, for any solution $c^*$, there is a non-decreasing energy $W_n$ over iterations:
    \begin{align}
        W_n \coloneqq \frac{\varphi}{\varphi-1}\normM{\hat c_n - c^*}^2 + \frac{\varphi\lambda_{n-1}}{2\lambda_{n-2}}\normM{c_n - c_{n-1}}^2
    \end{align}
    Initializing $c_1 = c_0$ and $\hat c_0 = c_0$ , Eq.~\eqref{eq:aGRAAL_lst} gives $\hat c_1 = c_0$ and $\normM{c_1-c_0}=0$. Accordingly, $W_1 = \frac{\varphi}{\varphi-1}\normM{c_0 - c^*}^2$ and
    \begin{align}
        \frac{\varphi}{\varphi-1}\normM{\hat c_n - c^*}^2 \le W_n \le W_1 = \frac{\varphi}{\varphi-1}\normM{c_0 - c^*}^2,
        \quad \Rightarrow \quad 
        \normM{\hat c_n - c^*}\le\normM{c_0 - c^*}.
    \end{align}
    Besides, rearranging Eq.~\eqref{eq:aGRAAL_lst} yields $c_n = \frac{\varphi}{\varphi-1}\hat c_n - \frac{1}{\varphi-1}\hat c_{n-1}$. Then, by the triangle inequality,
    \begin{align}
        \normM{c_n - c^*}
        \le \frac{\varphi}{\varphi-1}\normM{\hat c_n - c^*} + \frac{1}{\varphi-1}\normM{\hat c_{n-1} - c^*}\leq C_\varphi\,\normM{c_0 - c^*},
    \end{align}
    where $C_\varphi = \frac{\varphi+1}{\varphi-1}$ serves as the constant in Condition~(B1).
\end{proof}

    % Lemma~\ref{lem:distance_bound} directly implies $||c^B_n - c^*||\leq C_\varphi \sqrt{\text{cond}(M)}||c_0-c^*||\leq C_\varphi \sqrt{\text{cond}(M)}\mathcal{R}(\zeta(c_0))$. 
    % Hence, by replacing $\mathcal{R}$ in Proposition~\ref{prop:B1} with a scaled sublevel radius $C_\varphi \sqrt{\text{cond}(M)}\mathcal{R}$, we may follow the same proof to show that aGRAAL satisfies (B1). 

We note that the same result holds when the preconditioning matrix $M$ varies over acceleration steps, subject to additional regularity conditions. Since this is far away from the main focus of this study, we do not detail the proof for the general aGRAAL method.

\section{Surplus family}\label{app:surplus-families}

In Section~\ref{sec:surplus}, we first define the surplus $H_s$ and then construct the perturbation $H_s^*$ as its convex conjugate. The converse also holds, and the conditions for an automatic construction of admissible surplus from a perturbation are presented in the following proposition. 

% All surplus functions used in this paper share a common structure: $H_s$ can be expressed as the convex conjugate of a perturbation $F_s$ on the simplex, effectively the converse of Section~\ref{sec:FY}.
% The following result shows that this construction automatically delivers an admissible surplus function from any strictly convex perturbation.

\begin{proposition}[Construction of surplus function]\label{prop:conjugate-construction-body}
Let $H^*_s:\Delta_s \to \R$ be a finite, continuous, and strictly convex perturbation function. Then, its convex conjugate given by
\begin{align}\label{eq:perturb-conjugate}
    H_s(Q) \coloneqq \max_{\pi \in \Delta_s} \bigl\{ \pi\tran Q - H^*_s(\pi) \bigr\}, \qquad Q \in \R^{|\mathcal{A}_s|}
\end{align}
satisfies Standing Assumption~\ref{asm:surplus}. Besides, the maximizer of \eqref{eq:perturb-conjugate} is unique for every $Q$ and coincides with $\nabla H_s(Q)$, that is,
\begin{align}
    \nabla H_s(Q) = \argmax_{\pi \in \Delta_s} \bigl\{ \pi\tran Q - H^*_s(\pi) \bigr\} \in \Delta_s.
\end{align}

% Define
% \[
%   H_s(Q) \coloneqq \max_{\pi \in \Delta_s} \bigl\{ \pi\tran Q - F_s(\pi) \bigr\}, \qquad Q \in \R^{|\mathcal{A}_s|}.
% \]
% Then $H_s$ satisfies \ref{A1}--\ref{A3}, the maximizer is unique for every $Q$, and
% \[
%   \nabla H_s(Q) = \argmax_{\pi \in \Delta_s} \bigl\{ \pi\tran Q - F_s(\pi) \bigr\} \in \Delta_s.
% \]
\end{proposition}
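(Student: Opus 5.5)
The plan is to verify (A1)--(A3) directly from the variational definition \eqref{eq:perturb-conjugate}, using compactness of $\Delta_s$ for existence and strict convexity for uniqueness.

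\emph{Well-definedness and uniqueness.} Since $\Delta_s$ is compact and $\pi\mapsto \pi\tran Q - H_s^*(\pi)$ is continuous, the maximum in \eqref{eq:perturb-conjugate} is attained, so $H_s(Q)$ is finite. This objective is strictly concave because $H_s^*$ is strictly convex. Hence the maximizer, which we denote $\pi(Q)$, is unique.

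\emph{Convexity and translation equivalence.} Convexity in (A1) holds because $H_s$ is a pointwise maximum of the affine functions $Q\mapsto \pi\tran Q - H_s^*(\pi)$. For (A3), every $\pi\in\Delta_s$ satisfies $\mathbf{1}\tran\pi=1$. Therefore $\pi\tran(Q+\alpha\mathbf{1}) - H_s^*(\pi) = \pi\tran Q - H_s^*(\pi) + \alpha$, and taking the maximum gives $H_s(Q+\alpha\mathbf{1})=H_s(Q)+\alpha$.

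\emph{Differentiability and the gradient formula.} This is the main step. We first get continuity of $Q\mapsto\pi(Q)$ from Berge's maximum theorem: the constraint set is constant and compact, the objective is jointly continuous, and the maximizer is unique, so the argmax correspondence is single-valued and upper hemicontinuous, hence continuous.

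Next we apply Danskin's theorem, with index set $\Delta_s$ compact, objective affine and $C^1$ in $Q$, and $\nabla_Q(\pi\tran Q - H_s^*(\pi))=\pi$. It gives $\partial H_s(Q)=\mathrm{conv}\{\pi : \pi \text{ maximizes}\}=\{\pi(Q)\}$. A finite convex function with a singleton subdifferential is differentiable there, so $\nabla H_s(Q)=\pi(Q)$.

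For a self-contained argument we can avoid citing Danskin and sandwich directly. The inequalities
\[
H_s(Q')\ge \pi(Q)\tran Q' - H_s^*(\pi(Q)) = H_s(Q)+\pi(Q)\tran(Q'-Q)
\]
and, symmetrically, $H_s(Q')-H_s(Q)\le \pi(Q')\tran(Q'-Q)$ combine to
\[
0\le H_s(Q')-H_s(Q)-\pi(Q)\tran(Q'-Q)\le (\pi(Q')-\pi(Q))\tran(Q'-Q).
\]
The right-hand side is $o(\|Q'-Q\|)$ by continuity of $\pi(\cdot)$, which gives differentiability with $\nabla H_s = \pi(\cdot)$.

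\emph{Conclusion.} Continuity of $\pi(\cdot)$ then shows $H_s$ is $C^1$, which completes (A1). The gradient $\nabla H_s(Q)=\pi(Q)\in\Delta_s$ gives (A2) and the stated argmax identity.

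The main obstacle is the continuity of the argmax. It relies on uniqueness, and hence on strict convexity, together with compactness of $\Delta_s$; everything else is routine.
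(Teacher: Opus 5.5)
Your proof is correct. It agrees with the paper on convexity (pointwise maximum of affine functions), translation equivalence (via $\mathbf{1}\tran\pi=1$), and uniqueness (strict concavity).

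The difference is in the differentiability step. The paper argues through conjugate duality. It invokes Rockafellar's Theorem~23.5 to identify $\partial H_s(Q)$ with the set of maximizers in \eqref{eq:perturb-conjugate}. It then uses Theorem~25.1 to turn the singleton subdifferential into differentiability, and Corollary~25.5.1 to upgrade an everywhere-differentiable finite convex function to $C^1$. It never proves continuity of the argmax directly.

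Your Danskin variant is essentially the same argument, with Danskin in place of Theorem~23.5. The paper's use of 23.5 tacitly requires the $+\infty$-extension of $H_s^*$ off $\Delta_s$ to be closed. That holds because $H_s^*$ is continuous on a compact set, and Danskin lets you skip this bookkeeping.

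Your sandwich argument is the genuinely different, more elementary route. Once Berge (or a direct sequential-compactness argument) gives continuity of $\pi(\cdot)$, the two-sided bound yields differentiability and continuity of $\nabla H_s$ together. No subdifferential calculus or conjugacy theorems are needed.

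The paper's route is shorter given the citations. Yours is self-contained, and it makes explicit the continuity of the choice map, which the paper obtains only indirectly through Corollary~25.5.1.
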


\begin{proof}
We prove that $H_s$ constructed from Eq.~\eqref{eq:perturb-conjugate} satisfies each condition in Standing Assumption~\ref{asm:surplus}, along with the uniqueness of maximizer and its correspondence to $\nabla H_s$, as follows: 
\begin{enumerate}[label=\textup{(\roman*)}]
    \item Convexity \ref{asm:surplus_convex}: As a pointwise supremum of affine function of $Q$, $H_s$ is convex as per~\citet[Theorem~5.5]{Rockafellar+1970}. 
    
    \item Translation equivalence \ref{asm:surplus_equiv}: For each $\pi \in \Delta_s$ such that $\pi\tran\mathbf{1} = 1$,
    \begin{align}
        H_s(Q + \alpha\mathbf{1})
        = \max_{\pi \in \Delta_s} \bigl\{\pi\tran Q + \alpha - H^*_s(\pi)\bigr\}
        = \alpha + H_s(Q), \qquad \alpha \in \R.
    \end{align}
    
    \item Unique maximizer: Since $H^*_s$ is strictly convex, the objective $\pi\tran Q - H^*_s(\pi)$ is strictly concave. Hence, the maximizer of \eqref{eq:perturb-conjugate} is unique. 
    
    \item Differentiability, simplex gradient \ref{asm:surplus_grad}, and maximizer correspondence: By \citet[Theorem~23.5]{Rockafellar+1970}, $\pi \in \partial H_s(Q)$ if and only if $\pi$ attains the supremum defining $H_s(Q)$. Since the maximizer $\pi_s(Q)$ is unique, $\partial H_s(Q) = \{\pi_s(Q)\}$ is a singleton. As per \citet[Theorem~25.1]{Rockafellar+1970}, a convex function whose subdifferential is a singleton everywhere is differentiable. Hence, $\nabla H_s(Q) = \pi_s(Q) \in \Delta_s$.

    \item Smoothness (\ref{asm:surplus_convex}): 
    Since $H_s(Q)$ is the maximum value of a finite, continuous function~\eqref{eq:perturb-conjugate} over the compact simplex, it is finite.
    The smoothness is then directly implied from the differentiability due to \citet[Corollary~25.5.1]{Rockafellar+1970}. 
    % A finite convex function that is differentiable on all of $\R^{|\mathcal{A}_s|}$ is smooth due to~\citet[Corollary~25.5.1]{Rockafellar+1970}.
\end{enumerate}
\end{proof}

Below, we discuss the properties of each pair of surplus and choice map presented in Table~\ref{tab:surplus}. The properties stated in Standing Assumption~\ref{asm:surplus} are omitted because it is easy to verify that the negative Tsallis entropy in Eq.~\eqref{eq:tsallis} satisfies the conditions stated in Proposition~\ref{prop:conjugate-construction-body}.

\subsection{Logit/Softmax}
The logit/softmax choice map Eq.~\eqref{eq:softmax} can is easily shown to be interior and smooth. The Hessian of surplus is derived as 
\begin{align}
    \nabla^2 H_s(Q) = \frac{1}{\mu_s}\Big(\text{diag}(\nabla H_s(Q)) - \nabla H_s(Q)\nabla H_s(Q)\tran\Big),
\end{align}
which is smooth in $Q$. Thus, $H_s$ defined in Eq.~\eqref{eq:log-sum-exp} satisfies Assumption~\ref{asm:surplus_extra} and yields
a Lipschitz choice map.

\subsection{Sparsemax}
The sparsemax choice map Eq.~\eqref{eq:sparsemax} is piecewise affine and produces sparse/corner solution corresponding to the active set. Hence, it is not smooth and Assumption~\ref{asm:surplus_extra} does not hold either~\citep{martins2016softmax}. It nonetheless remains Lipschitz, since projection onto the simplex is a nonexpansive map and Lipschitz of constant 1.

\subsection{$\alpha$-entmax}
For $\alpha\in(1,2)$, the KKT condition of the maximization problem in Eq.~\eqref{eq:perturb-conjugate} with the negative Tsallis perturbation Eq.~\eqref{eq:tsallis} yields a thresholded choice map with each element
\begin{align}\label{eq:entmax_map}
    \pi_a =\left[(\alpha-1)\left(\frac{Q_a}{\mu_s}-\tau\right)\right]_+^{\frac{1}{\alpha-1}},
\end{align}
where $[\,\cdot\,]_+^p=(\max\{0,\cdot\})^p$, and the threshold $\tau$ is the Lagrangian multiplier associated with constraint $\sum_a\pi_a=1$ (with minor abuse of notation). The constraint is then rewritten as
\begin{align}\label{eq:entmax_normalization}
    F(Q, \tau) = \sum_{a\in\mathcal{A}_s}\left[(\alpha-1)\left(\frac{Q_a}{\mu_s}-\tau\right)\right]_+^{\frac{1}{\alpha-1}}-1 = 0.
\end{align}
Since the power $1/(\alpha-1)>1$, the optimal choice probability $\pi^*_a$ is $C^1$ in the difference $Q_a/\mu_s-\tau$ and strictly decreases in $\tau$ at active actions ($\pi_a>0$). Accordingly, $\partial F(Q,\tau)/\partial\tau \neq 0$ and as per the implicit function theorem, $\tau(Q)$ is $C^1$. Plugging $\tau(Q)$ back to Eq.~\eqref{eq:entmax_map} leads to an optimal choice map that is $C^1$ in $Q$. Given its equivalence to $\nabla H_s$, $H_s$ is $C^2$ and satisfies Assumption~\ref{asm:surplus_extra} and generates a Lipschitz choice map.

% For $\alpha\in(1,2)$, set $p=1/(\alpha-1)>1$.
% Writing the scaled Q-value $z_a = Q_a/\mu_s$, the KKT condition of the conjugate problem~\eqref{eq:perturb-conjugate} with the negative Tsallis perturbation~\eqref{eq:tsallis} yields a thresholded choice map
% \begin{align}\label{eq:entmax_map}
%     [\nabla H_s(Q)]_a \;=\; \bigl[(\alpha-1)\,(z_a-\tau)\bigr]_+^{\,p},
%     \qquad [\,\cdot\,]_+=\max\{0,\cdot\},
% \end{align}
% in which the scalar $\tau$ (the multiplier of the constraint $\sum_a\pi_a=1$) is chosen such that the normalization
% \begin{align}\label{eq:entmax_normalization}
%     g(\tau)\;\coloneqq\;\sum_{a\in\mathcal{A}_s}\bigl[(\alpha-1)\,(z_a-\tau)\bigr]_+^{\,p}-1\;=\;0 .
% \end{align}

% Since $p> 1$, the map $x\mapsto[(\alpha-1)x]_+^{p}$ is $C^1$ on $\R$, so $g$ is $C^1$ in $(Q,\tau)$. 
% In addition, since $g$ is strictly decreasing in $\tau$ ($\partial_\tau g<0$) for active actions, the implicit function theorem delivers a $C^1$ threshold $\tau(Q)$. Substituting it into~\eqref{eq:entmax_map} shows $\nabla H_s$ is $C^1$, hence $H_s$ is $C^2$. Assumption~\ref{asm:surplus_extra} thus holds for every $\alpha\in(1,2)$.

\subsubsection{Solution method of $\alpha$-entmax choice map}
Solving the choice map Eq.~\eqref{eq:entmax_map} is equivalent to a root finding problem of Eq.~\eqref{eq:entmax_normalization}. While depending on the value of $\alpha$, it may or may not yield a closed-form solution. Below, we detail the two cases $\alpha=1.2$ and $\alpha=1.5$ used in the experiments. 

\begin{itemize}
    \item \textbf{$\alpha=1.5$}: In this case, Eq.~\eqref{eq:entmax_map} reduces to 
    $\pi_a =\left[\left(Q_a/\mu_s-\tau\right)/2\right]_+^2$. Let $\mathcal{A}_s^+$ be the set of active actions and $z_a = Q_a/\mu_s$, then Eq.~\eqref{eq:entmax_normalization} has closed-form roots
    \begin{align}\label{eq:entmax15_tau}
        \tau = \mu_\tau - \sqrt{\frac{4}{|\mathcal{A}_s^+|} - \sigma_\tau},
    \end{align}
    where $\mu_\tau = \left(\sum_{a\in \mathcal{A}_s^+} z_a\right)/|\mathcal{A}_s^+|$ and $\sigma_\tau = \left(\sum_{a\in \mathcal{A}_s^+} z_a^2\right)/|\mathcal{A}_s^+| - \mu_\tau^2$. 

    The remaining task is to find the active action set $\mathcal{A}_s^+$, which can be efficiently done via Algorithm~\ref{alg:entmax15}~\citep{duchi2008efficient, peters2019sparse}

    \item \textbf{$\alpha=1.2$}: In this case, Eq.~\eqref{eq:entmax_map} has no closed-form solution but the monotone property of $F(Q,\tau)$ makes the root finding easily done via bisection search~\citep{blondel2020learning}. 
    Let $z_{\max} = \max_a z_a$ with $z_a=Q_a/\mu_s$. The lower bound can be easily found as $F(Q, z_{\max}) = -1 <0$. While the upper bound is constructed as 
    $F(Q, z_{\max}-\frac{1}{\alpha-1}) = \sum_a [(\alpha-1)(z_a - z_{\max}) + 1]_+^{1/(\alpha-1)} - 1 = \sum_{a: z_a < z_{\max}} [(\alpha-1)(z_a - z_{\max}) + 1]_+^{1/(\alpha-1)} + \sum_{a: z_a = z_{\max}}1 - 1 \geq 0$. The searching method is summarized in Algorithm~\ref{alg:entmax12}, and it is also the general solution approach to the $\alpha$-entmax choice map. 
\end{itemize}

% \paragraph{Computation for $\alpha$-entmax.}
% Computing the choice map~\eqref{eq:entmax_map} reduces to a single root finding of problem~\eqref{eq:entmax_normalization}. 
% We give the two cases ($\alpha=1.2, 1.5$) used in our experiments.

% \noindent \textbf{\textit{Closed form for $\alpha=1.5$.}}
% Here, Eq.~\eqref{eq:entmax_map} reads $\pi_a=\bigl[(z_a-\tau)/2\bigr]_+^2$. 
% For the set of active actions $\mathcal{A}_{s+}$, the normalization~\eqref{eq:entmax_normalization} becomes the quadratic $\sum_{a\in\mathcal{A}_{s+}}(z_a-\tau)^2=4$, whose root can be found by
% \begin{align}\label{eq:entmax15_tau}
%     \tau \;=\; \frac{1}{|\mathcal{A}_{s+}|}\!\!\sum_{a\in\mathcal{A}_{s+}}\!\!z_a \;-\; \sqrt{\frac{1}{|\mathcal{A}_{s+}|}-\mathrm{Var}},
%     \qquad
%     \bar z=\frac{1}{|\mathcal{A}_{s+}|}\!\!\sum_{a\in\mathcal{A}_{s+}}\!\!z_a,\quad
%     \mathrm{Var}=\frac{1}{|\mathcal{A}_{s+}|}\!\!\sum_{a\in\mathcal{A}_{s+}}\!\!z_a^2-\bar z^{2}.
% \end{align}
% The key is then to find the set of active actions $\mathcal{A}_{s+}$. 
% Thanks to the quadratic structure for $\alpha=1.5$, this can be done efficiently by a single descending sort followed by a scan~\citep{duchi2008efficient, peters2019sparse}, as summarized in Algorithm~\ref{alg:entmax15}.

\begin{algorithm}[h]
\caption{Exact $\alpha$-entmax choice map with $\alpha=1.5$}\label{alg:entmax15}
\begin{algorithmic}[1]
\REQUIRE $Q\in\R^{n}$, scale $\mu_s>0$
\STATE set $z \leftarrow Q/\mu_s$, $n=|\mathcal{A}_s|$
\STATE sort $z$ in descending order $z_{(1)}\ge\cdots\ge z_{(n)}$; set $S_1\leftarrow 0$, $S_2\leftarrow 0$
\FOR{$k = 1, \ldots, n$}
    \STATE $S_1\leftarrow S_1+z_{(k)}$, \quad $S_2\leftarrow S_2+z_{(k)}^2$ \hfill (cumulative sums over the top $k$ scores)
    \STATE $\tau \leftarrow \bigl(S_1-\sqrt{S_1^{2}-k\,(S_2-4)}\bigr)/k$ \hfill (root in Eq.~\eqref{eq:entmax15_tau} for the candidate active set)
    \IF{$k=n$ or $z_{(k+1)}\le \tau$}
        \STATE \textbf{break} \hfill (threshold $z_{(k)}>\tau\ge z_{(k+1)}$)
    \ENDIF
\ENDFOR
\RETURN $\pi$ with $\pi_a=\bigl[(z_a-\tau)/2\bigr]_+^{2}$ for all $a\in\mathcal{A}_s$
\end{algorithmic}
\end{algorithm}

% \noindent \textbf{\textit{Bisection for $\alpha=1.2$.}}
% For $\alpha=1.2$ (so $p=5$) no closed form is available, but the monotone structure of $g$ makes the root easy to find. 
% Because $g$ is strictly decreasing with $g(z_{\max})=-1<0$ and $g\bigl(z_{\max}-\tfrac{1}{\alpha-1}\bigr)\ge 0$, the threshold satisfies $\tau\in\bigl[z_{\max}-\tfrac{1}{\alpha-1},\,z_{\max}\bigr]$, where $z_{\max}=\max_a z_a$. Bisection on this range converges to any tolerance (Algorithm~\ref{alg:entmax12}), which is the general routine for evaluating the choice map at any $\alpha\in(1,2)$~\citep{blondel2020learning}.

\begin{algorithm}[h]
\caption{Bisection search for $\alpha$-entmax choice map}\label{alg:entmax12}
\begin{algorithmic}[1]
\REQUIRE $Q\in\R^{n}$, scale $\mu_s>0$, $\alpha\in(1,2)$, tolerance $\epsilon>0$
\STATE $z \leftarrow Q/\mu_s$, \quad $p \leftarrow 1/(\alpha-1)$, \quad $z_{\max}\leftarrow \max_a z_a$
\STATE $\underline{\tau}\leftarrow z_{\max}-1/(\alpha-1)$, \quad $\overline{\tau}\leftarrow z_{\max}$ 
\WHILE{$\overline{\tau}-\underline{\tau}>\epsilon$}
    \STATE $\tau\leftarrow(\underline{\tau}+\overline{\tau})/2$
    \STATE $g\leftarrow\sum_{a}\bigl[(\alpha-1)(z_a-\tau)\bigr]_+^{\,p}-1$
    \IF{$g>0$}
        \STATE $\underline{\tau}\leftarrow\tau$ 
    \ELSE
        \STATE $\overline{\tau}\leftarrow\tau$
    \ENDIF
\ENDWHILE
\RETURN $\pi$ with $\pi_a = \bigl[(\alpha-1)(z_a-\tau)\bigr]_+^{\,p}$ for all $a\in\mathcal{A}_s$
\end{algorithmic}
\end{algorithm}

\section{Experiment setup and supplementary results}\label{app:solver_params}

This appendix details the parameters and other setups used in the numerical experiments in Section~\ref{sec:experiments}. It follows with additional results of meta-algorithm convergence in the experiments on benchmark networks. 

% This appendix records the computational settings shared across the experiments in Section~\ref{sec:experiments} and describes the synthetic grid networks used in the sensitivity analysis.

\subsection{Default parameters}
Table~\ref{tab:param} reports the default values used in the solution framework. 
When NRL is applied, the scale parameter $\mu_s$ is independently and uniformly sampled from [0.5,2] for each state $s$, while for logit model, a uniform $\mu_s=1$ is applied.

\begin{table}[htb]
\centering
\caption{Default values of algorithm parameters.}
\small
\begin{threeparttable}
\begin{tabular}{lcc}
\toprule
\textbf{Description} & \textbf{Notation} & \textbf{Value}  \\
\midrule
MPI evaluation depth & $m$ & 10 \\
Inner gap threshold$^\dagger$ & $\varepsilon_\text{in}$ & $10^{-7}$ \\
Outer gap threshold & $\varepsilon_\text{out}$ & $10^{-5}$ \\
Maximum iteration & & $2\times 10^4$ \\
Safeguard factor & $\eta$ & 0.9 \\
Decay rate & $\tau$ & 0.999 \\
Restart period & $R$ & 20 \\
\midrule
\textbf{Base solver: ST} & & \\
Line search parameter & $\delta$ & 0.5\\
Correction scale parameter & $\theta$ & 1.5 \\
\midrule
\textbf{Base solver: aGRAAL} & & \\
Preconditioning matrix & $M$  & $\text{diag}((\nabla_\ell z_\ell(c_\ell)_{\forall \ell})$ \\
Initial step size & $\lambda_0$ & 0.05 \\
Maximum step size & $\bar{\lambda}$ & 1 \\
Golden ratio & $\varphi$ & 1.618 \\
\midrule
\textbf{Acceleration oracle} & & \\
Memory depth & $m_{ACC}$ & 10 \\
Regularization parameter$^*$ & $\eta_\text{reg}$ & $10^{-6}/10^{-4}$\\
\bottomrule
\end{tabular}
\begin{tablenotes}
\footnotesize 
\item[$\dagger$:] A smaller gap $\varepsilon_\text{in} = 10^{-12}$ is used in Section~\ref{sec:exp_sensitivity_mpi}.
\item[$*$:] $\eta_\text{reg}=10^{-6}$ is used for AA1 while $10^{-4}$ is used for NGMRES. 
\end{tablenotes}
\end{threeparttable}
\label{tab:param}
\end{table}

\subsection{Synthetic grid network generation}
As illustrated in Figure~\ref{fig:grid_illustration}, each grid network of demand node density $k$ has $4k+1$ nodes along each side. Each node is connected to its cardinal neighbors via bidirectional links with common BPR function parameters $t_0=1$ and $\kappa=2500$.

The demand between each pair of demand nodes is generated from a gravity model:
\begin{align}
    q_{od} \;=\; q \; \frac{\exp(-0.25 \mathrm{SPT}(o,d))}
    {\sum_{d' \neq o} \exp(-0.25 \mathrm{SPT}(o,d'))},
\end{align}
where $q$ is the demand per origin node, and $\mathrm{SPT}(o,d)$ denotes the free-flow shortest-path cost. 

\subsection{Convergence on benchmark networks}

Figure~\ref{fig:solver_traces} plots the outer gap over iterations for the six solver configurations reported in Table~\ref{tab:solver_comparison}. Figures~\ref{fig:robustness_siouxfalls}--\ref{fig:robustness_chicago} plot the gap trajectories in the robustness experiments on the demand and supply models across the three benchmark networks discussed in Section~\ref{sec:exp_robustness}.

\begin{figure}[h]
\centering
\includegraphics[width=0.6\textwidth]{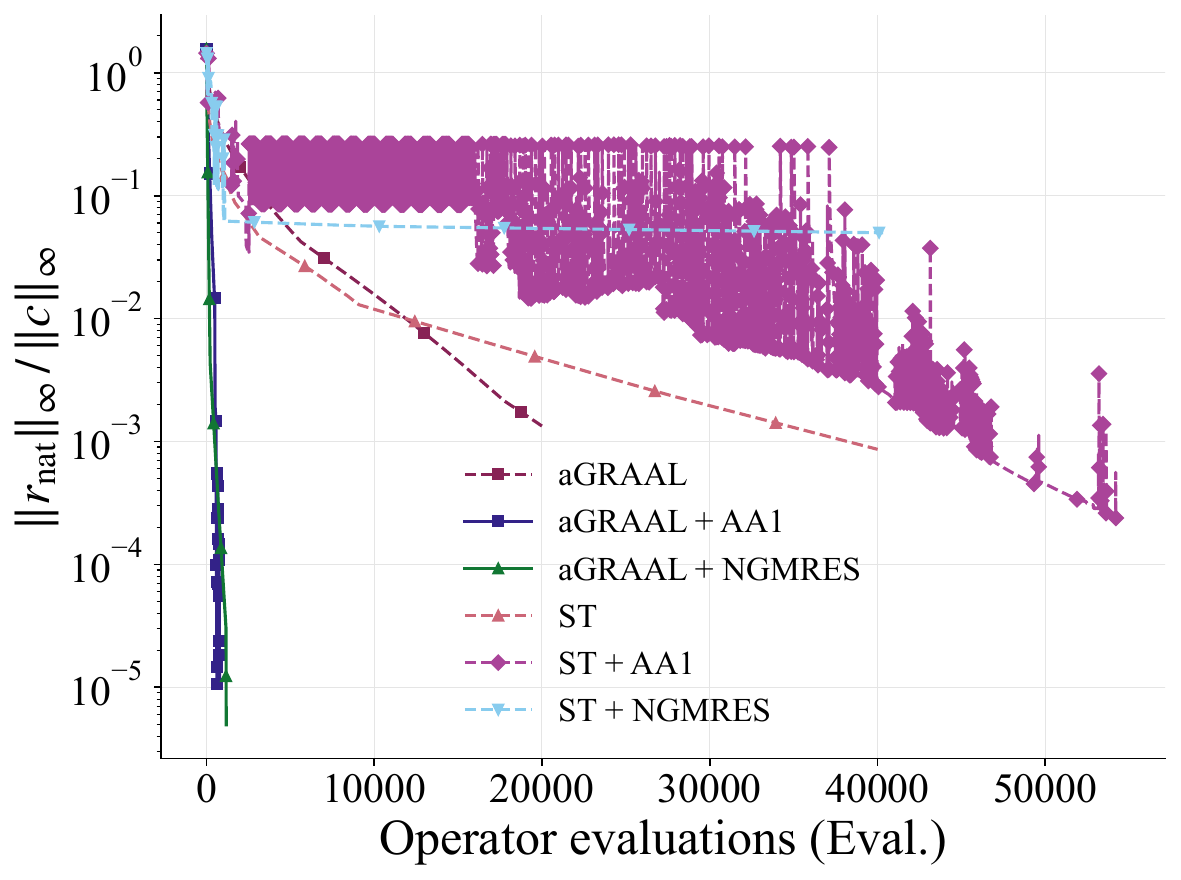}
\caption{Outer gap trajectories for different solver configurations.}
\label{fig:solver_traces}
\end{figure}

\begin{figure}[h]
\centering
\includegraphics[width=1.0\textwidth]{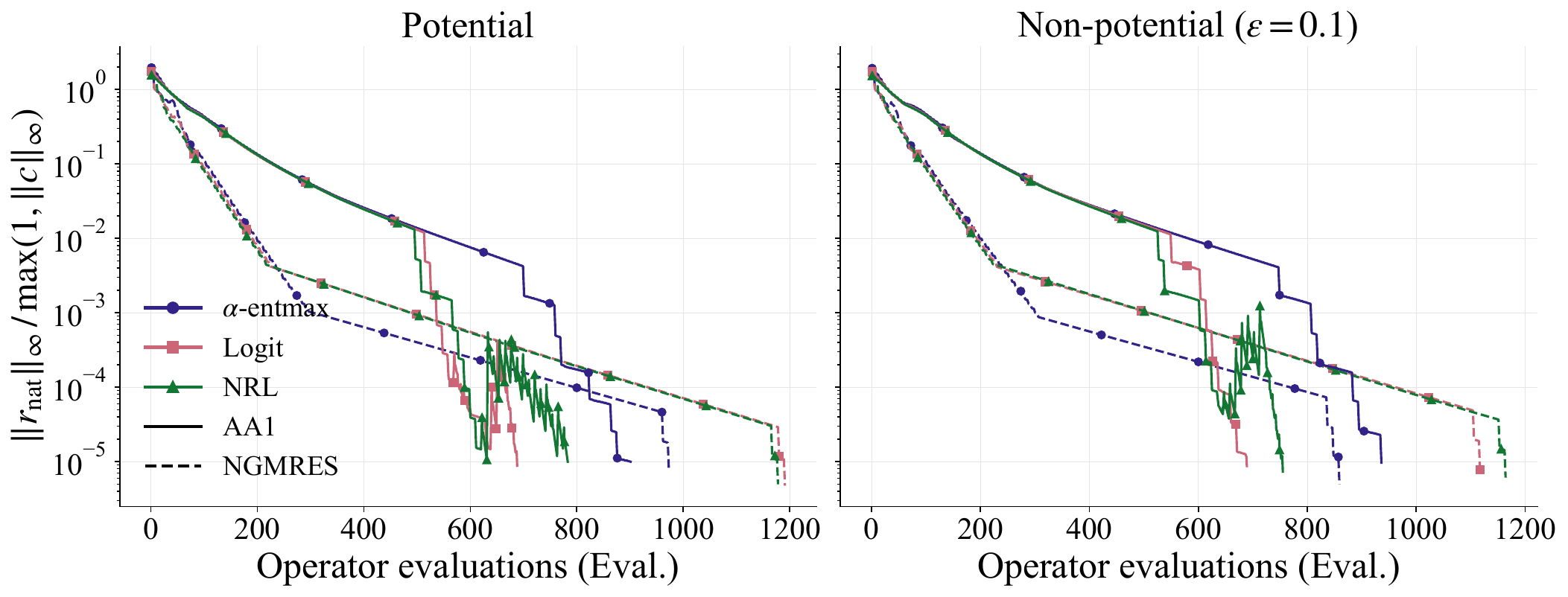}
\caption{Iteration trajectories on Sioux Falls.}
\label{fig:robustness_siouxfalls}
\end{figure}

\begin{figure}[h]
\centering
\includegraphics[width=\textwidth]{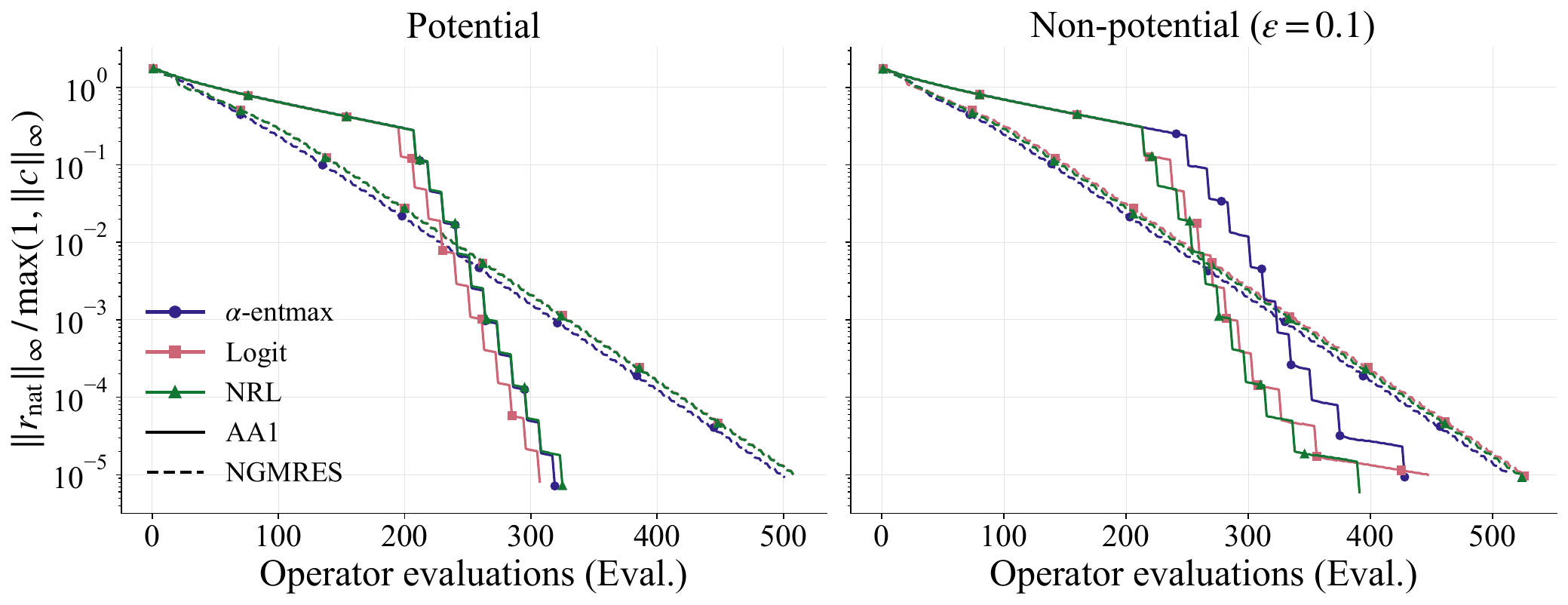}
\caption{Convergence trajectories on Anaheim.}
\label{fig:robustness_anaheim}
\end{figure}

\begin{figure}[h]
\centering
\includegraphics[width=\textwidth]{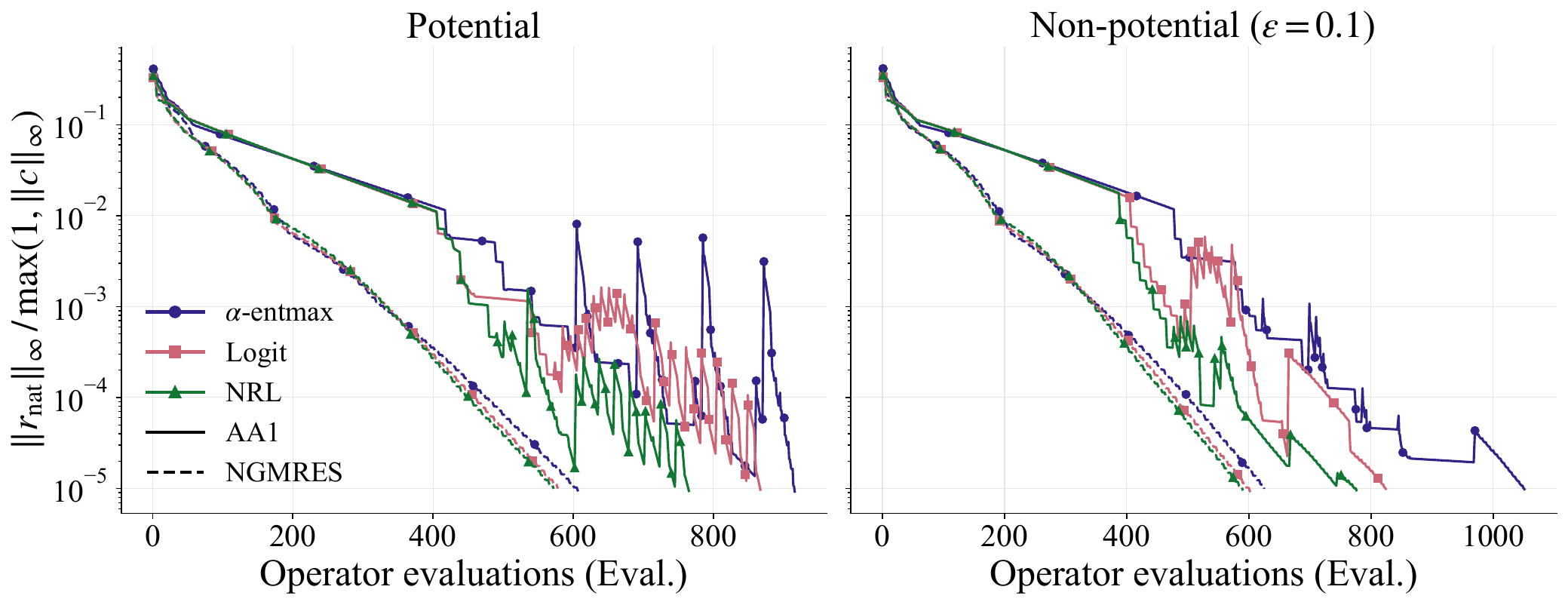}
\caption{Iteration trajectories on Chicago~Sketch.}
\label{fig:robustness_chicago}
\end{figure}

\end{document}